\newcommand{\Ipres}[2]{\mathrm{Inv}\langle #1\:|\:#2 \rangle}
\newcommand{\Gpres}[2]{\mathrm{Gp}\langle #1\:|\:#2 \rangle}
\def\mapright#1{\smash{\mathop{\longrightarrow}\limits^{#1}}}
\def\A{{\mathcal{A}}}
\def\Rw{\Rightarrow}
\def\invp{{\rm Inv}}
\def\P{{\mathcal{P}}}
\def\S{{\mathcal{S}}}
\def\p{\varphi}
\def\inv{^{-1}}
\newcommand{\FIM}{{\rm FIM}}
\newcommand{\FG}{{\rm FG}}
\newcommand{\MT}{{\rm MT}}
\renewcommand{\exp}{{\rm Exp}} 
\newcommand{\Exp}{{\rm Exp}}
\newcommand{\diam}{{\rm diam}} 
\def\x0#1{\Gamma(x_0, {#1})} 
\def\cc0#1{\Gamma^c(x_0, {#1})} 
\def\ck#1#2{\Gamma(x_0, {#2})} 
\def\cck#1#2{\Gamma^c(x_0, {#2})} 
\def\tc0#1{\tilde{\Gamma}(x_0, {#1})} 
\def\tcc0#1{\tilde{\Gamma}^c(x_0, {#1})} 
\newcommand{\beq}{\begin{equation}}
\newcommand{\eeq}{\end{equation}}
\def\bi{\begin{itemize}}
\def\ei{\end{itemize}}
\newtheorem{T}{Theorem}[section]
\newcommand{\bt}{\begin{T}}
\newcommand{\et}{\end{T}}
\newcommand{\ftd}{$\square$\end{T}}
\newtheorem{Proposition}[T]{Proposition}
\newcommand{\bp}{\begin{Proposition}}
\newcommand{\ep}{\end{Proposition}}
\newcommand{\fpd}{$\square$\end{Proposition}}
\newtheorem{Lemma}[T]{Lemma}
\newcommand{\bl}{\begin{Lemma}}
\newcommand{\el}{\end{Lemma}}
\newcommand{\fld}{$\square$\end{Lemma}}
\newtheorem{Corol}[T]{Corollary}
\newcommand{\bc}{\begin{Corol}}
\newcommand{\ec}{\end{Corol}}
\newcommand{\fcd}{$\square$\end{Corol}}
\newtheorem{Result}[T]{Result}
\newcommand{\br}{\begin{Result}}
\newcommand{\er}{\end{Result}}
\newcommand{\frd}{$\square$\end{Result}}
\newtheorem{Example}[T]{Example}
\newcommand{\be}{\begin{Example}}
\newcommand{\ee}{\end{Example}}
\newtheorem{Problem}[T]{Question}
\newcommand{\bq}{\begin{Problem}}
\newcommand{\eq}{\end{Problem}}
\newtheorem{Remark}[T]{Remark}
\newcommand{\brm}{\begin{Remark}}
\newcommand{\erm}{\end{Remark}}
\newcommand{\bid}{\begin{Remark}}
\newcommand{\eid}{\end{Remark}}
\newtheorem{Def}[T]{Definition}
\renewcommand{\proof}
   {\par\medbreak\noindent{\bf Proof}.\enspace}
\renewcommand{\qed}{
$\Box$
\par\bigbreak}
\begin{document}

\title[Inverse monoids with hyperbolic and tree-like Sch\"utzenberger graphs]{Algorithmic properties of inverse monoids\\ with hyperbolic and tree-like Sch\"utzenberger graphs}

\author{Robert D. Gray}
\address{School of Mathematics, University of East Anglia\\ Norwich NR4 7TJ, England, UK} 
\email{robert.d.gray@uea.ac.uk}
\author{Pedro V. Silva}
\address{Centro de Matem\'atica, Universidade do Porto\\
	R. Campo Alegre 687, 4169-007 Porto, Portugal}
\email{pvsilva@fc.up.pt}
\author{N\'{o}ra Szak\'{a}cs}
\address{Department of Mathematics, University of York\\ YO10 5DD
	United Kingdom}
\address{Bolyai Institute,
	University of Szeged\\
	Aradi v\'ertan\'uk tere 1.
	H-6720 Szeged, Hungary}
\email{nora.szakacs@manchester.ac.uk}

\subjclass[2010]{20F10, 20F05, 20M05, 20M18, 20F36}

\keywords{word problem, 
	finitely presented inverse monoid, 
	hyperbolic group, 
	virtually free group, context-free language, tree-like inverse monoid}

\thanks{The second author was partially supported by CMUP, which is financed by national funds through FCT -- Funda\c c\~ao para a Ci\^encia e a Tecnologia, I.P., under the project with reference UIDB/00144/2020. }
\thanks{
\noindent
	\setlength\intextsep{0pt}
	\begin{wrapfigure}{l}{0cm}
		\includegraphics[height=3em]{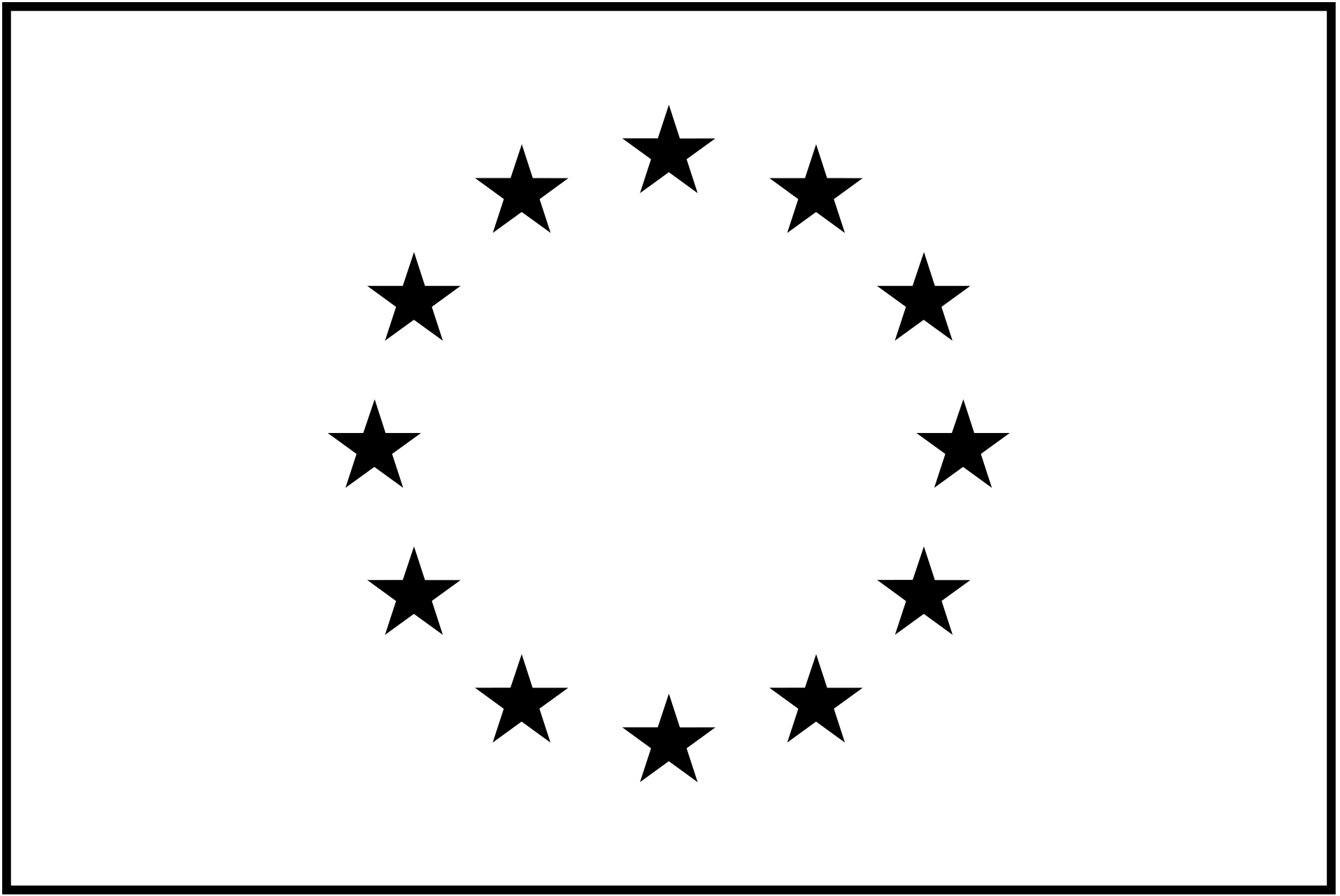}
	\end{wrapfigure}
	The third author was funded by the European Union’s
	Horizon 2020 research and innovation programme under the
	Marie Sk{\l}odowska-Curie grant agreement No 799419, and was partially supported by the Hungarian National Foundation for Scientific Research grant nos. K115518 and K128042.
}




\begin{abstract} 
 We prove that the class of finitely presented inverse monoids whose Sch\"utzenberger graphs are quasi-isometric to a trees has a uniformly solvable word problem, furthermore, the languages of their Sch\"utzenberger automata are context-free. On the other hand, we show that there is a finitely presented inverse monoid with hyperbolic Sch\"utzenberger graphs and an unsolvable word problem.
\end{abstract}

\maketitle

\section{Introduction}

In the 1910s, Dehn proved that fundamental groups of closed
orientable two-dimensional manifolds have a solvable word problem. The crucial feature of
these groups which the proof depended on led to the notion of small cancellation properties. In
the late 80s, Gromov introduced hyperbolic groups, motivated by giving small cancellation
a geometric interpretation. The notion represented a revolution in group theory due to a
conjunction of factors:
\begin{itemize}
	\item hyperbolic groups can be characterized using a geometric property of their Cayley
	graphs, called the Rips condition;
	\item hyperbolic groups have excellent algorithmic properties: they are biautomatic (in particular,
	they have an effectively solvable word problem), and their geodesics constitute
	an automatic structure;
	\item hyperbolic groups are finitely presented.
\end{itemize}
Hyperbolic groups led to the emergence of the vibrant field of geometric group theory and continue to be one of its central subjects, see for instance the recent monograph \cite[Chapter 11]{KapDru} for details.

Such a beautiful theory arouses inevitably the desire of extending it to other algebraic
structures, in particular, to (some class of) semigroups. Presentations of semigroups and the
corresponding word problem can be analogously defined.
So can the Cayley graph of a semigroup $S=\langle A \rangle$: it is a directed, edge-labeled graph with vertex set $S$, and for any $s \in S$ and $a \in A$, an edge labeled by $a$ from $s$ to $sa$.
The obstacle
to extending hyperbolicity in a meaningful way is that the geometric properties of the Cayley
graph of a semigroup are not as closely tied to its algebraic properties as in the case of groups.
Duncan and Gilman \cite{DG} have proposed a notion of hyperbolic semigroup which generalizes a
language-theoretic condition characterizing hyperbolic groups. But this condition is not at
all geometric, and that constitutes a handicap for a notion that had a geometric motivation
from the beginning. On the other hand, Cassaigne and Silva [3] considered Rips condition for
the undirected Cayley graph of monoids defined by special confluent rewriting systems and
used it to get a hyperbolic boundary. But this only settles a small subclass of monoids.

In this paper we introduce a new approach to the study of hyperbolicity and related geometric conditions in \emph{inverse monoids}. Inverse monoids are one of the most important generalizations of groups, that originally emerged as the abstract counterparts of partial symmetries. In an inverse monoid, every element $s$ has a unique inverse with the property $ss^{-1}s=s$ and $s^{-1}ss^{-1}=s^{-1}$.
A rich combinatorial and geometric theory for inverse
monoid presentations emerged in the early 90s due to the work of Stephen, Margolis
and Meakin \cite{Ste1, MM, MM1}, among others. The Cayley graph of an inverse monoid $S$ generated by $A$ (as an inverse monoid) is just the Cayley graph of the semigroup $S$ with respect to the semigroup generating set $A \cup A^{-1}$. The Cayley graph of an inverse monoid is in general not strongly connected (in particular words of the form $aa^{-1}$ may not label loops), and the geometry of the undirected Cayley graph does not adequately capture the properties of the monoid -- for instance adjoining a $0$ to any inverse monoid leaves its algorithmic properties unchanged, but makes its undirected Cayley graph of diameter at most $2$.

The strongly connected components of the (directed) Cayley graph are called Sch\"utzenberger graphs, and these have proven to be very useful in investigating algorithmic questions. One the one hand, Sch\"utzenberger graphs share key
properties with Cayley graphs of groups, in particular, the word metric defines a metric on each Sch\"utzenberger graph.
On the other hand, the whole family of Sch\"utzenberger
graphs determines the inverse
monoid uniquely, and the respective notion of Sch\"utzenberger automaton is the key to the solution to the word problem \cite{Ste1}. 

There are numerous results in the literature which suggest that if the Sch\"utzenberger graphs of a finitely presented inverse monoid are sufficiently tree-like, then the inverse monoid will have good algorithmic properties; see e.g. 
\cite{BMM, CMP, HLM, LO, MM, RC}.
The broadest class that one might hope to establish this for in general would be the inverse monoids whose Sch\"utzenberger graphs are Gromov-hyperbolic.  However in Section \ref{Sec.hyperbolic} we shall give a construction which proves, rather surprisingly, that there exist such finitely presented inverse monoids with an undecidable word problem. In fact such examples exist where all the Sch\"utzenberger graphs are $\delta$-hyperbolic for a single fixed value of $\delta$. This shows that in order to obtain a class of inverse monoids with good algorithmic properties a condition stronger than hyperbolicity is needed.

In Section \ref{Sec.treelike}, we impose the stronger geometric condition on Sch\"utzenberger graphs of polygon hyperbolicity, which is equivalent to being quasi-isometric to trees. We call such finitely generated inverse monoids tree-like. It follows from Stallings' theorem about ends of groups and Dunwoody's accessibility theorem that tree-like groups are exactly the finitely generated virtually free groups \cite[Theorem 20.45]{KapDru}, while the celebrated Muller-Schupp theorem \cite{MS} states that they are exactly the groups with a context-free word problem. In the language of inverse monoids, this last condition on groups is equivalent to the language of each Sch\"utzenberger automaton being context-free.

We find that in the case of inverse monoids, even being tree-like does not imply finite presentability (Corollary \ref{nonfp}). However finitely presented tree-like inverse monoids share some of the nice algorthimic properties of hyperbolic and virtually free groups, in particular 
\begin{itemize}
	\item each Sch\"utzenberger graph has a rational set of geodesics (Theorem \ref{phrat}),
	\item each Sch\"utzenberger automaton has a context-free language (Theorem \ref{cfthm}), 
	\item the word problem is uniformly solvable (Theorem \ref{thm:solvwp}).
\end{itemize}
We note that we can derive from Theorem \ref{cfthm} analogous results proved in the literature for special subclasses of tree-like inverse monoids \cite{AR, CNR}.

We close the paper with a section on open questions motivated by the rich theories of virtually free and hyperbolic groups and our results.

\section{Preliminaries}

\subsection{Graphs}
Most of the graphs considered in the paper are edge-labeled, directed graphs, and these will just be referred to as graphs for brevity. Almost all of these graphs have edges occurring in inverse pairs, i.e. they are graphs in the sense of Serre \cite{Serre}. This property of graphs is called \emph{symmetric} in the paper. Occasionally, undirected graphs also come up, in these cases we will emphasize that they are undirected. By a tree, depending on context, we either mean an undirected tree in the usual sense, or a digraph obtained from an undirected tree by replacing the edges with a pair of opposite directed edges.

For a subset $S$ of any graph $\Gamma$, the subgraph induced by $S$ is denoted by $\langle S \rangle$. If $\Gamma$ is a symmetric graph, the induced subgraph
is also defined to be symmetric, containing edges in inverse pairs.

Given a graph $\Gamma$, the set of its vertices is denoted by $V(\Gamma)$, the set of its edges by $E(\Gamma)$. For any edge $e \in E(\Gamma)$, $\alpha(e)$ denotes its initial vertex, $\omega(e)$ its terminal vertex. The graph $\Gamma$ is considered to be the union of its edges and vertices, that is, the notation $t \in \Gamma$ means $t$ is either an edge or a vertex of $\Gamma$. The label of an edge $e$ is denoted by $l(e)$.

A \emph{path} in a graph is a sequence of consecutive edges, i.e. a sequence $e_1 \ldots e_n$ of edges where $\omega(e_i)=\alpha(e_{i+1})$ for $1 \leq i \leq n-1$. Given a path $e_1 \ldots e_n$, we call $n$ the length of the path $(n\geq 0)$. The initial and terminal vertices of a path $p$ will also be denoted by $\alpha(p)$, and respectively $\omega(p)$. A label of the path $p=e_1 \ldots e_n$ is the word
$l(e_1) \ldots l(e_n)$, and it is denoted by $l(p)$. The path $p$ is called \textit{closed} if $\alpha(p)=\omega(p)$. A path is called \emph{simple} if all the vertices it traverses are different, with the possible exception of the initial and terminal vertex. {As usual, the concatenation of two paths $p$ and $q$ with $\alpha(q)=\omega(p)$ is denoted by $pq$.}

One can define a distance function on the set of vertices of a connected, symmetric graph, or on the vertices of an undirected graph by 
$$d(u,v)=\min \{n: e_1 \ldots e_n\hbox{ is a path from }u\hbox{ to }v\}.$$
Note that in the case of general (directed) graphs, this function may not be symmetric or finite. In the following, we consider the vertex sets of symmetric graphs as metric spaces defined by the metric above. 

Let $\Gamma$ be any symmetric digraph. A \emph{geodesic path} in $\Gamma$ is a path $p$ such that the length of $p$ is exactly $d(\alpha(p),\omega(p))$. In particular, any non-loop edge is a geodesic.
For vertices $u$ and $v$ of $\Gamma$, we sometimes use $[u,v]$ to denote a geodesic path from $u$ to $v$.

For any subgraph $X$ of $\Gamma$, denote by $X^{+r}$ the graph induced by those vertices of $\Gamma$ which are at distance at most $r$ from $X$. We refer to this subgraph as the $r$-neighborhood of $X.$ In particular, for a vertex $x_0$, $x_0^{+r}$ is the closed disk of center $x_0$ and radius $r$, which we denote by $D_r(x_0)$. The diameter of a connected subgraph $X$ is 
$$\diam(X)=\sup\{d(u,v): u, v \in V(X)\}.$$

\subsection{Automata and languages}

In this section we provide a very brief introduction to automata and formal languages. A more comprehensive exposition can be found for instance in \cite{HU}.

An {\em automaton} is a structure of the form $\A = (\Gamma, I,T)$ where $\Gamma$ is an edge-labeled digraph, and $I,T \subseteq V(\Gamma)$ are nonempty subsets called initial and terminal vertices respectively.  We also assume that there is at most one edge in $\Gamma$ with any given initial and terminal vertex and label.
Vertices of $\Gamma$ are often referred to as states, and edges as transitions. The set of labels is called the alphabet, let us denote it by $A$.
The automaton $\A = (\Gamma, I,T)$ is finite if both $V(\Gamma)$ and $E(\Gamma)$ are finite. We say that $\A$ is {\em deterministic} if $|I| = 1$ and for any $e_1, e_2 \in E(\Gamma)$, if we have $\alpha(e_1)=\alpha(e_2)$ and $l(e_1)=l(e_2)$, then $\omega(e_1)=\omega(e_2)$. {When $\Gamma$ is a symmetric, connected graph labeled over an alphabet of the form $A=B \cup B^{-1}$ such that $l(e^{-1})=(l(e))^{-1}$ for any edge $e$, and $\A$ is deterministic, we call $\A$ an \emph{inverse automaton}.}

The set of all finite words over the alphabet $A$ are denoted by $A^\ast$, this forms the free monoid on $A$ with respect to concatenation. The identity element is the empty word, which is denoted by $\epsilon$. Subsets of $A^\ast$ are called  \textit{languages} over $A$.

A path $p$ in $\A$ is {\em successful} if $\alpha(p)$ is initial and $\omega(p)$ is terminal. This can be interpreted as the automaton accepting the input word $l(p)$. The {\em language $L(\A)$ of $\A$} is the set of labels of successful paths in $\A$, that is, the set of words $\A$ accepts. 
A language $L \subseteq A^*$ is called {\em rational} if it is the language of some finite automaton.

Two automata $\A = (\Gamma,I,T)$ and $\A' = (\Gamma',I',T')$ are {\em isomorphic} if there exists a labeled graph isomorphism $\varphi:\Gamma \to \Gamma'$ such that $\varphi(I) = I'$, $\varphi(T) = T'$.
We use the notation $\A \cong \A'$ to express that the automata $\A$ and $\A'$ are isomorphic.

A {\em congruence} on the labeled graph $\Gamma$ is an equivalence relation $\tau$ on $V(\Gamma)$ for which whenever $e_1, e_2 \in E(\Gamma)$ are such that $\alpha(e_1) \mathrel{\tau} \alpha(e_2)$ and $l(e_1)=l(e_2)$, they also satisfy $\omega(e_1) \mathrel{\tau} \omega(e_2)$. We can then define the quotient $\Gamma/\tau$ with vertex set $V(\Gamma)/\tau$ and for every edge $e$ of $\Gamma$, an edge $e\tau$ from $\alpha(e)\tau$ to $\omega(e)\tau$ with label $l(e)$.

A {\em congruence} on the automaton $\A = (\Gamma, I, T)$ is a congruence on $\Gamma$. We define the quotient $\A/\tau = (\Gamma/\tau,I\tau,T\tau)$ by taking $I\tau = \{ i\tau \mid i \in I\}$ and $T\tau = \{ t\tau \mid t \in T\}$.

A {\em pushdown automaton} (abbreviated to {\em pda}) is, informally speaking, a finite state automaton with a stack. A stack is a data type that contains a sequence of elements, that is, a word, and has two operations: 
\begin{itemize}
\item `popping': taking the first letter off the word, and
\item `pushing': concatenating a new word to the beginning of the word. 
\end{itemize}
A stack is initialized to contain a single letter called the initial stack symbol.

Formally a pushdown automaton is a structure of the form $\A = (A,S,Z,Q,q,T,\delta)$, where
\begin{itemize}
\item
$A$, $X$ and $Q$ are finite sets (alphabet, stack alphabet and states, respectively);
\item
$Z \in X$ and $q \in Q$ (initial stack symbol and initial state);
\item
$T \subseteq Q$ (terminal states);
\item
$\delta$ is a finite subset of $Q \times (A \cup \{ \epsilon \}) \times X \times Q \times X^*$ (set of transitions).
\end{itemize}

A transition $(p,a,x,q,u) \in \delta$ is also denoted by 
$$p \xrightarrow{a,\ x \to u} q.$$ It
has the meaning that in the state $p$, on the input $a \in A \cup \{ \epsilon\}$, with $x \in X$ as topmost stack symbol, the pda can read $a$, pop $x$, push $u \in X^\ast$, and change to the state $q$. (Reading $\epsilon$ from the input means leaving it untouched.)

{A \emph{configuration} of $\A$ is an element of $Q \times X^\ast$, with the first component describing the state, the second the content of the stack. Transitions of $\A$ give rise to transitions between configurations in a natural way. A transition from the configuration $(p, w)$ to $(q, u)$ on input $a \in A \cup \{\epsilon\}$ is denoted by $(p, w) \vdash_a (q, u)$. Note that the initial configuration of $\A$ is $(q,Z)$.}

We define the language accepted by $\A$ to be the language $L(\A)$ consisting of all $w \in A^*$ admitting a factorization of the form $w = w_1\ldots w_n$ $(w_j \in A \cup \{ \epsilon \}$) satisfying the following property: there exist $q_1,\ldots,q_{n-1} \in Q$, $q_n \in T$, $u_1, \ldots, u_n \in X^\ast$ and a series of transitions
$$(q, Z) \vdash_{w_1} (q_1,u_1) \vdash_{w_2} \cdots \vdash_{w_n} (q_n, u_n).$$

A language $L \subseteq A^*$ is {\em context-free} if it is the language accepted by some pushdown automaton.

\subsection{Inverse monoids}

An {\em inverse monoid} is a monoid $M$ with the
property that for each $s \in M$ there exists a unique element
$s^{-1} \in M$ (the inverse of $s$) such that $s =ss^{-1}s$ and
$s^{-1} = s^{-1}ss^{-1}$. We refer the reader to \cite{law} for an introduction to the subject.
The axioms imply $(s^{-1})^{-1}=s$ and $(st)^{-1}=t^{-1}s^{-1}$ for any $s,t \in M$, but in contrast with groups, $ss^{-1}$ and $s^{-1}s$ can be different from the identity or from one another. They are however always idempotent, moreover, since an idempotent of an inverse monoid is always its own inverse, every idempotent of $M$ is of this form. 

Idempotents play an important role in the theory of inverse monoids. For instance, notice that an inverse monoid is a group if and only if it has only one idempotent (the identity element).
In any inverse monoid $M$, idempotents commute with each other, and so the product of idempotents is again an idempotent. Therefore the set of idempotents forms a subsemigroup, moreover a subsemilattice in $M$. The semilattice structure induces a partial order on the idempotents in the usual way, and this can be extended to the whole inverse monoid as follows: for $s,t \in M$, we define $s \leq t$ if there exists an idempotent $e$ with $s=te$.
This is called the \emph{natural partial order} on $M$, and it is compatible with multiplication and taking inverses.

For studying the structure of inverse monoids, or semigroups in general, one of the most basic concepts is understanding when and how two elements can be multiplied into one another. This is encoded by the so-called Green's relations, from which we will only be needing one, the $\mathcal R$-relation. Two elements $s,t \in M$ are $\mathcal R$-related if they can be right multiplied into each other, that is if there exist elements $u,v \in M$ such that $su=t$ and $tv=s$. This is an equivalence relation. In inverse monoids, this turns out to be equivalent to saying $ss^{-1}=tt^{-1}$. The $\mathcal R$-class of an element $s$ is denoted by $R_s$. Notice that $ss^{-1}$ is always $\mathcal R$-related to $s$, so the idempotent $ss^{-1}$ is the unique idempotent in $R_s$. It may also be shown that if $s \mathrel{\mathcal R} t$ and $su=t$, then we have $tu^{-1}=s$.

Notice that in any Cayley graph of $M$, there is a path from $s \in M$ to $t \in M$ if and only if there exists $u \in M$ such that $su=t$, therefore the $\mathcal R$-classes correspond to the strongly connected components. If $a \in M$ is such that $s \mathrel{\mathcal R} sa$, then by our previous assertion, we have $saa^{-1}=s$. So within the strongly connected components, edges occur in inverse pairs (though this is not generally true in the Cayley graph). The strongly connected components of the Cayley graph are called Sch\"utzenberger graphs, and as they are symmetric graphs, they are metric spaces. However we would like to point out that in a Sch\"utzenberger graph, some generators may not label any edges at a given vertex. It may also happen that they have no non-trivial automorphisms -- this is the case for instance in the Sch\"utzenberger graph of $1$ in the example shown in Figure \ref{fig:bicyclic}.

Inverse monoids form a variety (in the sense of universal algebra, in signature $(2,1,0)$), hence free inverse monoids exist. The free inverse monoid on a set $A$, denoted by $\FIM(A)$, is obtained as a quotient of the free monoid $(A \cup A^{-1})^\ast$ equipped with the involution $^{-1}$ defined by $(a_1^{\epsilon_1} \ldots a_n^{\epsilon_n})^{-1}=a_n^{-\epsilon_n}\ldots a_1^{-\epsilon_1}$ for $a_i \in A, \epsilon_i=\pm 1$. The appropriate congruence is the so-called Wagner congruence denoted by $\rho$, and is generated by pairs of the form $ww^{-1}w \sim w$, and $w_1w_1^{-1}w_2w_2^{-1} \sim w_2w_2^{-1}w_1w_1^{-1}$, where $w, w_1, w_2 \in (A \cup A^{-1})^\ast$.

The obtained equivalence classes do not have a `reduced' normal form like in the case of free groups. Nevertheless there is a way to represent classes by graphs using a description due to Munn \cite{munn}. For any word $w \in (A \cup A^{-1})^\ast$, its \textit{Munn tree}
is the edge-labeled tree $\MT(w)$  traced out by $w$ in the Cayley graph of the free group ${\rm FG}(A)$ starting from the identity $1_{\FG(A)}$, with the initial vertex $1_{\FG(A)}$ and the terminal vertex $w_{\FG(A)}$ (the image of $w$ in the free group) marked. Munn proved that $u\rho=w\rho$ if and only if the triplets $(\MT(u), 1_{\FG(A)}, {u}_{\FG(A)})$ and $(\MT(w), 1_{\FG(A)}, {w}_{\FG(A)})$ are isomorphic as birooted edge-labeled graphs, or in other words, as automata. The language of the automaton $(\MT(w), 1_{\FG(A)}, {w}_{\FG(A)})$ therefore clearly contains the $\rho$-class of $w$, but also more than that: for instance notice that if $a,b \in A$, then $a$ is accepted by the automaton corresponding to $\MT(abb^{-1})$, but $a\rho \neq (abb^{-1})\rho$. What is true in general is that the language of $(\MT(w), 1_{\FG(A)}, {w}_{\FG(A)})$ consists of all words $u$ for which $u\rho \geq w\rho$ in the natural partial order on the elements in the free inverse monoid $\FIM(A)$

The Munn tree $\MT(w)$ is isomorphic to the Sch\"utzenberger graph containing $w\rho$, with its initial vertex corresponding to $(ww^{-1})\rho$, and its terminal vertex corresponding to $w\rho$. Stephen extended Munn's
results \cite{Ste1} to study the word problem in arbitrary
inverse monoids, with Munn trees replaced by Sch\"utzenberger graphs.
Any inverse monoid $M$ generated by $A$ (as an inverse monoid) is the quotient of the free monoid $(A \cup A^{-1})^\ast$ with involution by a congruence $\tau$ (which contains $\rho$). The word problem for $M$ is the problem of deciding if two arbitrary words $u, v \in (A \cup A^{-1})^\ast$ fall into the same $\tau$-class, that is, if they represent equal elements of $M$.

The Sch\"utzenberger graph $\S(w)$ of a word $w \in (A
\cup A^{-1})^*$ is the Sch\"utzenberger graph of $M$ containing $w\tau$. The {\em Sch\"utzenberger automaton} ${\mathcal A}(w)$ of $w$ is the automaton  $(\S(w),(ww^{-1})\tau,w\tau)$.  In his paper \cite{Ste1}, Stephen proves the
following result.

\bt
\label{Ste}
Let $M=\langle A \rangle$ be an inverse monoid. Then

(a) For each word $u \in (A \cup A^{-1})^*$, the  language accepted by the \\
Sch\"utzenberger automaton ${\mathcal A}(u)$ is the set of all
words $w \in (A \cup A^{-1})^*$ such that $u\tau \leq w\tau$ in the natural partial
order on $M$,

(b) $u\tau = w\tau$ in $M$ iff $u \in L({\mathcal A}(w))$ and $w \in
L({\mathcal A}(u))$,

(c) the word problem for $M$ is decidable if and only if there is an algorithm
which takes any two words $u,w
\in (A \cup A^{-1})^*$ as input, and decides if $u \in L({\mathcal A}(w))$.
\et

More can be said when $M$ is given by a presentation. A presentation $M=\invp\langle A\mid R \rangle$ , where $R \subseteq (A \cup A^{-1})^\ast \times (A \cup A^{-1})^\ast$, defines the inverse monoid $M$ as the quotient of $\FIM(A)$ by the congruence generated by $R$, or alternatively, as the quotient $(A \cup A^{-1})^\ast$ by the congruence generated by $\tau=\rho \cup R$. Notice that since every inverse monoid is a quotient of some free inverse monoid, every inverse monoid has a presentation.

Stephen also proved that $\S(w)$, and equivalently, $\A(w)$, can be obtained as a direct limit of successive expansions and foldings as follows.
Let $\Gamma$ be a digraph labeled over $A \cup A^{-1}$ in a way that edges occur in inverse pairs. We define the following operations:
\begin{itemize}
\item $P$-expansion: if $s=t$ or $t=s$ is a relation in $R$, and for some $u,v \in V(\Gamma)$, there exists a path $p_s$ labeled by $s$ with $\alpha(p_s)=u$ and $\omega(p_s)=v$, but there is no such path between $u$ and $v$ labeled by $t$, then adjoin a simple path $p_t$ labeled by $t$ to $\Gamma$ with $\alpha(p_t)=u$ and $\omega(p_t)=v$, and all internal vertices of $p_t$ disjoint from $\Gamma$. (Here we add all inverse pairs of edges of $p_t$ to $\Gamma$ as well to obtain a symmetric graph.)
\item edge folding: if there are edges $e_1$ and $e_2$ of the same label with  common initial or common terminal vertices, identify these edges.
\end{itemize}

By Stephen \cite{Ste1}, these operations are confluent. Starting with any graph $\Gamma$, the set of all graphs obtained by applying successive
$P$-expansions and edge foldings forms a directed system in the
category of $A$-labeled graphs. The direct limit of this system is denoted by $\exp(\Gamma)$.

{We define the full $P$-expansion of $\Gamma$ to be the graph obtained by performing all possible $P$-expansions of $\Gamma$. We emphasize that this only involves $P$-expansions which can be performed on $\Gamma$ itself, not involving any newly added paths. The iteration of edge-foldings to obtain a deterministic graph is called determinization. Given a graph $\Gamma$, let $\exp_1(\Gamma)$ denote the determinization of the full $P$-expansion of $\Gamma$. For $i \geq 2$, let $\exp_i(\Gamma)= \exp_1(\exp_{i-1}(\Gamma))$.  Then $\exp(\Gamma)$ is also the direct limit of the directed system formed by $\{\exp_i(\Gamma): i \in \mathbb N\}$. For convenience, we extend our notation by $\exp_0(\Gamma):=\Gamma$. 

Stephen proved the following in \cite{Ste1}:

\bt
For any inverse monoid presentation $\invp\langle A \mid R\rangle$ and any word $w \in (A \cup A^{-1})^\ast$, $\exp(\MT(w))=\S(w)$.
Furthermore, the images of $1_{\FG(A)}$, and $w_{\FG(A)}$ in the limit graph $\exp(\MT(w))$ correspond to $(ww^{-1})\tau$ and $w\tau$ respectively.
\et

As an example, let us consider the \textit{bicyclic monoid}, which is the most famous inverse monoid that is not a group. It can be defined by the presentation $\invp\langle a \mid aa^{-1}=1\rangle$. Figure \ref{fig:bicyclic} shows a finite part of the Cayley graph of the bicyclic monoid. The Sch\"utzenberger graphs are colored black, the other edges are grey.
Stephen's algorithm to build $\S(1)$ for instance would start with the single vertex $1$, then the first expansion would attach a loop labeled by $aa^{-1}$, the second expansion a second loop based at the vertex $a$ labeled by $aa^{-1}$, and so on. In this particular case, $\exp_i(\MT(1))$ embeds in $\exp_{i+1}(\MT(1))$ for any $i \in \mathbb N$, however this is not true in general.

\begin{center}
\begin{figure}[h]
\includegraphics[scale=0.65]{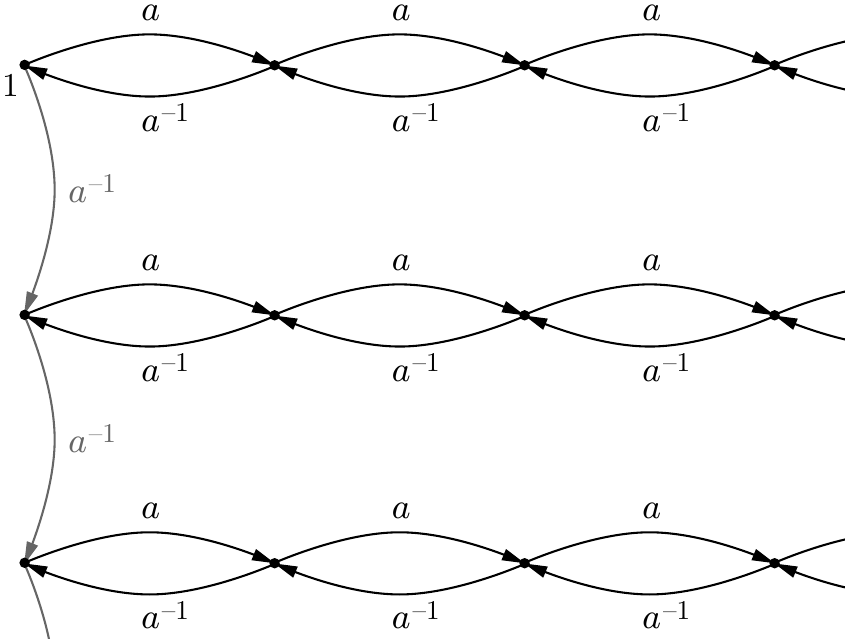}
\caption{The Cayley graph of the bicyclic monoid}
\label{fig:bicyclic}
\end{figure}
\end{center}

An approximate automaton $\A$ of $\A(w)$ is an inverse automaton labeled over $A \cup A^{-1}$ such that $w \in L(\A) \subseteq L(\A(w))$. An approximate graph is the underlying (labeled) graph of an approximate automaton. Any deterministic graph obtained by expansions and foldings from $\MT(w)$ is an approximate graph of $\S(w)$. {Conversely, every finite subgraph of $\S(w)$ is a subgraph of $\exp_k(\MT(w))$ for some $k$, this follows from \cite[Thm 5.11]{Ste1}.}



Every inverse monoid has a greatest group homomorphic image which is obtained by collapsing all idempotents to the identity. For instance the greatest group homomorphic image of $\FIM(A)$ is the free group $\FG(A)$, and the greatest group homomorphic image of the bicyclic monoid is $\mathbb Z$. If the inverse monoid is given by a presentation $M=\invp\langle A \mid R\rangle$, then this is the group given by the same presentation regarded as a group presentation $G=\Gpres{A}{R}$. Denoting the image of $w \in (A\cup A^{-1})^\ast$ in $G$ by $w_G$, the canonical homomorphism $\sigma:  M \rightarrow G$ maps $w\tau$  to $w_G$ for any $w \in (A\cup A^{-1})^\ast$. 

This defines a map $\overline{\sigma}$ from the union of Sch\"utzenberger graphs of $M$ to the Cayley graph of $G$ in the obvious way, and this is an edge-labeled graph morphism. In the case of  $\FIM(A)$ for instance, this is the inclusion map of Munn trees into the Cayley graph of $\FG(A)$. In case of the bicyclic monoid, $\overline{\sigma}$ embeds each Sch\"utzenberger graph into the Cayley graph of $\mathbb Z$. However, in general, $\overline{\sigma}$ is not injective on individual Sch\"utzenberger graphs. For instance any inverse monoid with a zero has a trivial greatest group homomorphic image, but can very well have non-trivial Sch\"utzenberger graphs.

An inverse monoid is called \emph{$E$-unitary} if $\sigma^{-1}(1_G)$ is exactly the set of idempotents of the monoid, and this property is equivalent to $\overline{\sigma}$ being injective on each Sch\"utzenberger graph. When the inverse monoid is given by a special presentation, more is true: 

\begin{Lemma}[\cite{Ste2}, Lemma~3.5]\label{lem:full:sibgraph}  
Let $M= \Ipres{A}{w_1=1, \ldots, w_k=1}$, and let $G=\Gpres {A}{w_1=1, \ldots, w_k=1}$ be its greatest group homomorphic image. If $M$ is $E$-unitary, then for all $w \in (A \cup A^{-1})^*$ the Sch\"{u}tzenberger graph $\mathcal{S}(w)$ is embedded into the Cayley graph of $G$ by $\overline{\sigma}$ as an induced subgraph.         
\end{Lemma}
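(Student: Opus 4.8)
The plan is to unpack the phrase \emph{embedded as an induced subgraph} into its two constituent requirements: that $\overline{\sigma}$ is injective on $\S(w)$ (so that it is a genuine embedding), and that the image is \emph{full}, i.e.\ every edge of the Cayley graph of $G$ joining two vertices of $\overline{\sigma}(\S(w))$ is already the image of an edge of $\S(w)$. The first requirement is immediate from the hypothesis: it was observed above that $M$ is $E$-unitary precisely when $\overline{\sigma}$ is injective on each Sch\"utzenberger graph, so $\overline{\sigma}$ restricts to a labelled-graph embedding of $\S(w)$ into the Cayley graph of $G$, carrying the base vertex $(ww^{-1})\tau$ to $1_G$.

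For fullness I would fix an edge of the Cayley graph of $G$ labelled $a\in A\cup A^{-1}$ running from $g=\overline{\sigma}(v\tau)$ to $h=\overline{\sigma}(u\tau)$, where $v\tau,u\tau$ are vertices of $\S(w)$, so that $ga=h$ in $G$. Because $\overline{\sigma}$ is injective on $\S(w)$, any outgoing $a$-edge at $v\tau$ ends at a vertex mapping to $ga=h$, hence at $u\tau$; so it suffices to produce \emph{some} $a$-edge at $v\tau$, that is, to show $v\tau\mathrel{\R}v\tau a$. By the remarks on the $\R$-relation this is exactly the identity $(vaa^{-1})\tau=v\tau$ in $M$, and that is the statement I would target.

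To prove it I would exploit two closure properties that $\S(w)=\exp(\MT(w))$ inherits from being stable under the full $P$-expansions and foldings defining $\exp$ for the \emph{special} presentation $\Ipres{A}{w_1=1,\ldots,w_k=1}$, equivalently from being a fixed point of the operator $\exp_1$: (A) every path labelled by a relator $w_i$ is closed, and (B) at every vertex each relator $w_i$ labels a loop. Using connectivity of $\S(w)$, choose a path $\pi$ from $v\tau$ to $u\tau$ and let $m$ be its label; its image under $\overline{\sigma}$ runs from $g$ to $h$, whence $m_G=a_G$, so $ma^{-1}$ represents $1_G$ and admits a van Kampen diagram $D$ over $\Gpres{A}{w_1=1,\ldots,w_k=1}$. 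I would then develop $D$ based at $v\tau$ and argue by induction on the number of two-cells of $D$ that the whole image of $D$ lies inside $\S(w)$, so that in particular the boundary arc labelled $a$ lifts to the desired edge. In the base case $D$ has no two-cells, $m$ freely reduces to $a$, and the corresponding cancellations $x x^{-1}$ are realised inside the folded graph $\S(w)$ by deleting an edge together with its inverse, leaving an honest $a$-edge from $v\tau$ to $u\tau$. In the inductive step I would peel off an outermost two-cell, replacing a boundary arc of $\pi$ by the complementary arc of some relator $w_i$; property (B) guarantees that this complementary arc is already present as part of a relator loop at the relevant vertex, reducing the number of cells by one.

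The main obstacle is exactly this inductive step: one must organise the development of $D$ so that every relator cell is filled \emph{inside} $\S(w)$ and no step ever calls for an edge of the Cayley graph of $G$ outside the image of $\S(w)$. The decisive input is property (B) — that $\S(w)$ carries a complete relator loop at every vertex — which is precisely where the \emph{special} form of the presentation does the work, with $E$-unitarity entering to transport readability faithfully between $\S(w)$ and the Cayley graph of $G$; everything else is bookkeeping on van Kampen diagrams.
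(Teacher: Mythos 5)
First, a framing remark: the paper does not actually prove this lemma — it quotes it from Stephen's paper \cite{Ste2} — so your attempt can only be judged on its own merits, not against an in-paper argument. The first half of what you write is sound: injectivity of $\overline{\sigma}$ on $\S(w)$ is exactly the paper's stated reformulation of $E$-unitarity; the reduction of inducedness to producing \emph{some} $a$-edge at $v\tau$, i.e.\ to $(vaa^{-1})\tau=v\tau$, is correct; your properties (A) and (B) do hold in $\S(w)$ for special presentations; and the base case (the free reduction of a word readable in a folded inverse graph is again readable, with the same endpoints) is standard.

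The genuine gap is the inductive step. An outermost two-cell meets the current boundary path in an arc whose label is, in general, a \emph{middle} factor of the relator: the cell boundary reads $s\,p\,t'$, a cyclic conjugate of some $w_i^{\pm 1}$, where $p$ labels the shared arc from $x$ to $y$ and $s,t'$ may be nonempty. Property (B) only supplies loops labelled $w_i$ \emph{based at} each vertex; to align such a loop with the shared arc you must first read $s$ backwards into $x$, and those edges of $\S(w)$ need not exist — their existence is precisely the kind of fullness statement you are in the middle of proving. Determinism (plus co-determinism, since edges come in inverse pairs) rescues you only when $p$ is a prefix of $w_i^{\pm1}$, so that the relator loop based at $x$ itself aligns with the arc; nothing forces the peeling to encounter only cells of this type.

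That this is a real obstruction and not a matter of bookkeeping is shown by the fact that your fullness argument never uses $E$-unitarity (injectivity enters only after the $a$-edge has been produced). If the induction worked, it would prove fullness for \emph{every} special presentation, and that is false. Take $M=\Ipres{a,b}{aba^{-1}=1}$: here $b\tau$ maps to $1_G$ but is not idempotent (represent $a,b$ by the partial bijections $n\mapsto n+2$ of $\mathbb N$ and the identity on $\{2,3,\dots\}$ extended by $0\mapsto 1$; the relation holds and $b$ is not idempotent), so $M$ is not $E$-unitary. One computes that $\S(1)$ is the ray $*\xrightarrow{\,a\,}p_1\xrightarrow{\,a\,}p_2\to\cdots$ with a $b$-loop at every $p_i$ but \emph{no} $b$-edge at $*$; this graph is deterministic and $P$-complete, and $\overline{\sigma}$ is injective on it. Yet $G=\Gpres{a,b}{aba^{-1}=1}\cong\mathbb Z$ has $b_G=1_G$, so the Cayley graph of $G$ has a $b$-loop at $\overline{\sigma}(*)$: the image of $\S(1)$ is not induced. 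All the inputs of your induction are available here (properties (A), (B), determinism, and a van Kampen diagram for $b^{-1}$ with a single cell labelled $ab^{-1}a^{-1}$), and tracing it shows exactly where it breaks: filling that cell at $*$ requires reading $a^{-1}$ from $*$, i.e.\ an incoming $a$-edge at $*$, which $\S(1)$ does not have. Any correct proof must therefore invoke $E$-unitarity inside the fullness argument itself — as Stephen's cited proof does — and your sketch does not supply that step.
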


\section{Equivalent characterizations of tree-like graphs}

Recall that a \emph{quasi-isometry} between metric spaces $(X, d_X)$ and $(Y, d_Y)$ is a map $\psi: X \to Y$ for which there exist positive constants $k,c, K,C$ such that for all $x_1, x_2 \in X$,
$$kd_X(x_1,x_2)-c \leq d_Y(\psi(x_1),\psi(x_2)) \leq Kd_X(x_1,x_2)+C,$$
furthermore there exists a constant $m$ such that for all $y \in Y$, there exists $x \in X$ with $d(y, \psi(x)) \leq m$.
We say that a symmetric graph $\Gamma$ is \emph{tree-like} if it is quasi-isometric to a tree.
In this section, we describe a few equivalent characterizations of tree-like graphs, and assume all graphs considered to be symmetric.

For any graph $\Gamma$ and any partition $\P$ of $V(\Gamma)$, we define the undirected graph $\Gamma/{\P}$ by $V(\Gamma/{\P})=V(\Gamma)/\P$, and for any blocks $S_1, S_2\in \P$, there is an edge between $S_1$ and $S_2$ if and only if $S_1 \neq S_2$, and there exist vertices $u_i \in S_i$ $(i=1,2)$ such that there is an edge between $u_1$ and $u_2$ is $\Gamma$. Note that $\Gamma/{\P}$ is, by definition, simple.
A \emph{strong tree decomposition} of a graph $\Gamma$ is a partition $\P$ of $V(\Gamma)$ such that $\Gamma/{\P}$ is an (undirected) tree. We call $\sup\{\operatorname{diam}(S): S \in \mathcal P\}$ the \emph{width} of the decomposition (which may or may not be finite).
 
{A {\em geodesic polygon} $G$ of a symmetric graph $\Gamma$ is a a sequence $p_1,\ldots ,p_n$ $(n \geq 2)$ of geodesics, where $\alpha(p_{i+1})=\omega(p_i)$ $(1 \leq i \leq {n-1})$, and $\alpha(p_1)=\omega(p_n)$. These geodesics form the sides of the polygon.  If $\delta \geq 0$, we say that $G$ is $\delta$-{\em thin} if for all $i \in \{ 1,\ldots, n\}$, for all vertices $y$ in $p_i$, there exists a vertex $z$ in $\bigcup_{j \neq i} p_j$ such that $d(y,z) \leq \delta$.}

We say that $\Gamma$ is {\em polygon $\delta$-hyperbolic} if all geodesic polygons of $\Gamma$ are $\delta$-thin. We say that $\Gamma$ is {\em polygon hyperbolic} if it is polygon $\delta$-hyperbolic for some $\delta \geq 0$. This is a special case of Gromov's notion of hyperbolicity, which $\Gamma$ satisfies if there is a constant $\delta$ such that every geodesic triangle of $\Gamma$ is $\delta$-thin (in which case $\Gamma$ is $\delta$-hyperbolic). This condition is known as the Rips condition.

Gromov originally defined hyperbolicity using what is now called the Gromov product. If $x,y,z \in V(\Gamma)$, then the Gromov product of $x$ and $y$ at $z$ is $(x|y)_z=\frac{1}{2}(d(z,x)+d(z,y)-d(x,y))$. Then $\Gamma$ is $\delta$-hyperbolic if for any $x_1, x_2, y,z \in V(\Gamma)$ we have 
$$(x_1|x_2)_z \geq \min\{(x_1|y)_z,  (x_{2}|y)_z)\}-\delta.$$

Let $x_0,x \in V(\Gamma)$. The {\em cone} of $x$ with respect to the basepoint $x_0$ is defined as
$$C(x_0,x) = \langle y \in V(\Gamma) \mid d(x_0,y) = d(x_0,x) + d(x,y) \rangle.$$
This is the same as saying that there exists a geodesic between $x_0$ and $y$ containing $x$.

\bp
\label{prop_tree}
For any symmetric graph $\Gamma$, the following conditions are equivalent:
\begin{enumerate}
	\item $\Gamma$ is quasi-isometric to a tree,
	\item there exists a constant $m >1$ and a strong tree decomposition $\P$ of $\Gamma$ with width at most $m$.
	\item there exists a constant $m >1$ such that for any $x,y,z \in V(\Gamma)$ and any path $p$ from $x$ to $y$,
	$$d(z,p) \leq \frac{1}{2}(d(z,x)+d(z,y)-d(x,y))+m,$$
	\item there exists some constant $k$ such that for any $x_0, \ldots, x_n, z \in V(\Gamma)$,
	$$(x_0|x_n)_z \geq \min\{(x_0|x_1)_z, \ldots, (x_{n-1}|x_n)_z)\}-k,$$
	\item $\Gamma$ is polygon hyperbolic,
	{\item there exists a constant $\delta$ such that if $x_0 \in V(\Gamma) \setminus D_{\delta}(x)$ and $y \in V(C(x_0,x)) $, then there is no path in $\Gamma \setminus D_{\delta}(x)$ connecting $x_0$ to $y$.}
\end{enumerate}
\ep

\proof
By \cite[Theorem 4.7]{Ant}, conditions (1)--(3) are equivalent to each other, whereas by \cite[Theorem 3.2]{Silva}, so are conditions (4)--(5). We complete the proof by showing (4) $\Rightarrow$ (3), (1) $\Rightarrow$ (5), and (5) $\Leftrightarrow$ (6).

(4) $\Rightarrow$ (3): let $x,y,z \in V(\Gamma)$, and let $p$ be a path from $x$ to $y$. Let $x=x_0, x_1, \ldots, x_{n-1}, x_n=y$ be the sequence of vertices $p$ traverses. Then for any $i$, $d(x_i,x_{i+1}) \leq 1$, hence 
$$(x_i|x_{i+1})_z \geq \frac{1}{2}(d(z,x_i)+d(z,x_{i+1})-1)\geq\min\{d(z,x_i), d(z,x_{i+1})\}-1.$$
It follows that 
$$(x|y)_z+k \geq \min\{(x_0|x_1)_z, \ldots, (x_{n-1}|x_n)_z)\} \geq \min\{d(z,x_0), \ldots, d(z,x_n)\}-1=d(z,p)-1,$$
and hence by $(4)$, $\frac{1}{2}(d(z,x)+d(z,y)-d(x,y))+k+1=(x|y)_z+k+1\geq d(z,p)$, and $m=1+k$ satisfies the conditions of $(3)$.

(1) $\Rightarrow$ (5): suppose $\psi: \Gamma \to T$ is a quasi-isometry from $\Gamma$ to some tree $T$, with constants $k,c, K, C$. The key observation is that there exists a constant $\delta$ such that if $[u,v]$ is a geodesic in $\Gamma$, then the path $\psi([u,v])$  and any geodesic $[\psi(u),\psi(v)]$ are in each other's $\delta$-neighborhood. This statement holds in the more general case of quasi-isometries into hyperbolic spaces, and is proved for instance in \cite{BH} (Chapter III.H).

Let $G =  p_1,p_2, \ldots ,p_n=[x_0,x_1],[x_1,x_2],\ldots ,[x_n,x_0]$ be a geodesic polygon in $\Gamma$. We claim that $\psi(G)$ is $2\delta$-thin. Indeed, let $u \in \psi([x_i,x_{i+1}])$. Then there exists some vertex $u'$ in some geodesic of the form $[\psi(x_i), \psi(x_{i+1})]$ such that $d(u,u')\leq \delta$. Consider a geodesic polygon of the form
$G'=[\psi(x_0),\psi(x_1)],[\psi(x_{1}),\psi(x_2)],\ldots ,[\psi(x_n),\psi(x_0)]$, where $[\psi(x_i), \psi(x_{i+1})]$ is the geodesic containing $u'$.
As trees are polygon $0$-hyperbolic, $u'$ is contained in a different side of $G'$ as well, say, $[\psi(x_j),\psi(x_{j+1})]$ $(j \neq i)$. Let $u''$ be a vertex of $\psi([x_j, x_{j+1}])$ such that $d(u',u'') \leq \delta$. Then $d(u,u'') \leq 2\delta$, and $\psi(G)$ is indeed $2\delta$-thin.

Now let $x \in [x_{i}, x_{i+1}]$. There exists some vertex $y \in [x_j, x_{j+1}]$, $j \neq i$ such that $d(\psi(x),\psi(y))\leq 2\delta$, hence $d(x,y)\leq \frac{1}{k}(d(\psi(x),\psi(y))+c) \leq \frac{2\delta+c}{k}.$ Therefore $G$ is $\frac{2\delta+c}{k}$-thin, and $\Gamma$ is polygon hyperbolic.

{(5) $\Rw$ (6). Suppose $\Gamma$ is polygon $\delta$-hyperbolic, and let $x_0 \in V(\Gamma) \setminus D_{\delta}(x)$ and $y \in V(C(x_0,x))$. Suppose that there exists a path $e_1\ldots e_n$ in $\Gamma \setminus D_{\delta}(x)$ connecting $x_0$ to $y$. Consider a geodesic $[y,x_0]$ in $\Gamma$ containing $x$ (its existence follows from $y \in C(x_0,x)$) and consider the geodesic polygon
$G =  e_1,\ldots ,e_n ,[y,x_0]$.
Since $x \in [y,x_0]$ and $\Gamma$ is polygon $\delta$-hyperbolic, there exists some $z \in e_1\ldots e_n$ such that $d(x,z) \leq \delta$, contradicting $e_1\ldots e_n \subseteq \Gamma \setminus D_{\delta}(x)$. Therefore there is no path in $\Gamma \setminus D_{\delta}(x)$ connecting $x_0$ to $y$.

(6) $\Rw$ (5). We claim that $\Gamma$ is polygon $\delta$-hyperbolic with the $\delta$ of condition (6). Suppose that this is not the case. Then we may assume that there exists a geodesic polygon of the form $p_1, \ldots, p_n$ and $x \in p_n$ such that $p_1, \ldots ,p_{n-1} \subseteq \Gamma \setminus D_{\delta}(x)$. Let $x_0=\alpha(p_1)=\omega(p_n)$, and let $y=\omega (p_{n-1}) = \alpha (p_n)$.
Then $p_1 \ldots p_{n-1}$ is a path from $x_0$ to $y$ in $V(\Gamma) \setminus D_{\delta}(x)$, and $y \in C(x_0,x)$, therefore (6) fails.}
\qed

Note that by $(1)$, being tree-like is clearly a quasi-isometry invariant.

\section{Tree-like Sch\"utzenberger graphs}
\label{Sec.treelike}

In this section, we consider finitely presented inverse monoids with tree-like Sch\"utzenberger graphs, and we prove that they share some of the nice algorithmic properties of hyperbolic and virtually free groups.

Let $M={\rm Inv}\langle A\rangle$ be a finitely generated inverse monoid. If for any word $w \in (A \cup A\inv)^*$, the Sch\"utzenberger graph $\S(w)$ is tree-like, we say that $M$ is tree-like. 
{This definition does not depend on $A$, as changing the system of generators yields quasi-isometries between the respective Sch\"utzenberger graphs \cite{BM}. (We remark that \cite{BM} proves something more general, namely that the Cayley graphs of a monoid with respect to two different systems of generators are quasi-isometric as semimetric spaces; the quasi-isometries of the Sch\"utzenberger graphs are the restrictions of this quasi-isometry to the $\mathcal R$-classes.)
}

{If an inverse monoid happens to be a group, then it has a unique Sch\"utzenberger graph: the Cayley graph, and it is tree-like if and only if it is a virtually free group. Other examples include 
free inverse monoids (where Sch\"utzenberger graphs are not only trees, but finite trees), and presentations of the form ${\rm Inv}\langle A \mid R \rangle$, where each word occurring in $R$ is freely reducible to $1$: here Sch\"utzenberger graphs are trees, see \cite[Lemma 1.6]{MM}.

All of the theorems that we prove in this section are based on the same observation that can be roughly described as follows: take an induced, connected subgraph $X$ of $\S(w)$ that does not intersect the path labeled by $w$, starting at $(ww^{-1})\tau$. Then the $K$-neighborhood of $X$ (where $K$ is a constant depending on the presentation) completely determines those connected components of $\S(w) \setminus X$ which do not contain $(ww^{-1})\tau$. We begin the section by the precise formulation and proof of the above claim.

Let ${\rm Inv} \langle A \mid r_1=s_1,\ldots, r_m=s_m \rangle$ be a finite inverse monoid presentation, and $w \in (A \cup A\inv)^*$, and denote $(ww\inv)\tau$ by $x_0$. 
Put $K = \max\{2, |r_1|,\ldots,|r_m|,|s_1|,\ldots,|s_m|\}$.

Given any approximate graph $S$ of $\S(w)$, and a connected, induced subgraph $X$ of $S$ with $x_0 \notin X$, let $\x0{X}$ denote the connected component of $S \setminus X$ containing $x_0$, and $\cc0{X}$ denote the complement of $\x0{X}$  in  $S \setminus X$. 



Recall that $\S(w)=\Exp(\MT(w))$, denote the image of $\MT(w)$ in $\S(w)$ by $M$. Suppose $X$ is an induced, connected subgraph of $\S(w)$ such that $X \cap M =\emptyset$, and let $g$ be a path from $M$ to $X^{+K}$ in $\S(w)$. Of course $\S(w)=\Exp(M \cup g \cup X^{+K})$. Let $x_0=(ww^{-1})\tau$, and consider the subgraphs $\ck{k}{X}$ and $\cck{k}{X}$ of  $\S(w)$. Note that $\ck{k}{X}$ contains the image of $M$, since $X$ does not intersect $M$.

Consider the graph $\Exp(X^{+K})$, observe that $X^{+K}$ is a subgraph here. Note that for any $P$-expansion or folding performed when building $\Exp(X^{+K})$, an identical $P$-expansion or folding may be performed when building $\S(w)=\Exp(M \cup g \cup X^{+K})$, and hence there is a unique labeled graph morphism $\varphi \colon \Exp(X^{+K}) \to \S(w)$ which fixes $X^{+K}$. 
As $X^{+K}$ is an induced subgraph of $S(w)$, the existence of $\varphi$ immediately implies that it is an induced subgraph of $\exp(X^{+K})$ as well, moreover, we claim it is the $K$-neighborhood of $X$ in $\exp(X^{+K})$. Denote the latter subgraph by $\tilde X^{+K}$. 
Clearly $X^{+K} \subseteq \tilde X^{+K}$. For the opposite containment, let $v \in V(\tilde X^{+K})$, and let $p$ be a geodesic path in $\exp(X^{+K})$ from $X$ to $v$. Note that since $\varphi$ fixes $X^{+K}$, we have $\varphi(\alpha(p))=\alpha(p)$. Furthermore, $\varphi(p)$ is a path of length at most $K$ in $\S(w)$ from $\varphi(X)=X$ to $\varphi(v)$, therefore $\varphi(p)$ lies completely in $X^{+K}$, (in particular, $\varphi(v) \in X^{+K}$). 
Since $X^{+K}\subseteq  \exp(X^{+K})$, we have $\varphi(p) \in \exp(X^{+K})$, in fact, $\varphi(p)$ is a path in $\exp(X^{+K})$ from $\alpha(p)$ labeled by $l(p)$. Since $\exp(X^{+K})$ is deterministic, this implies $p=\varphi(p)$, in particular, $v=\varphi(v) \in X^{+K}$.
This proves the claim, and allows us to use the notation $X^{+r}$ for $r\leq K$ unambiguously.

\bl
\label{explem}
The above morphism $\varphi$ restricted to $\varphi^{-1}(\cck{k}{X})$ is an isomorphism between $\varphi^{-1}(\cck{k}{X})$ and $\cck{k}{X}$, and hence $\varphi^{-1}$ embeds $\cck{k}{X}$ into $\Exp(X^{+K})$ as a subgraph.
\el

\proof
The idea of the proof is that all the $P$-expansions and foldings which yield $\cck{k}{X}$ are also performed in the construction of $\Exp(X^{+K})$.
Recall that a $P$-expansion means adjoining a new simple path $p_s$ labeled by $s$ parallel to an existing path $p_t$ labeled by $t$, where $s=t$ is a relation. We say that an edge or vertex of $\S(w)$ was \textit{involved} in such a given $P$-expansion if it is an edge or vertex of the image of the respective paths $p_s$ or $p_t$. In addition, if the edge or vertex is in the image of $p_t$, we say that the $P$-expansion was \textit{dependent} on that edge or vertex.
Similarly, we say an edge or vertex of $\S(w)$ was involved in a given folding operation identifying a pair of edges if it is the image of these edges or one of their endpoints, and in this case the folding is also dependent on this edge or vertex. 

Suppose $\gamma_1, \ldots ,\gamma_n$ is a sequence of $P$-expansions and foldings performed in the construction of $\S(w)$ as $\Exp(M \cup g \cup X^{+K})$, furthermore suppose it is a sequence of operations performable on $M \cup g \cup X^{+K}$ in this order, without the need to perform any other operations.
Let $P_i$ $(1 \leq i \leq n)$ be the subgraph of $\S(w)$ consisting of all the edges and vertices which were involved in $\gamma_i$, and let $D_i$ be the subgraph of $\S(w)$ consisting of all the edges and vertices on which $\gamma_i$ is dependent. Observe that $D_i \subseteq \ck{k}{X} \cup P_1 \cup \cdots \cup P_{i-1}$ must hold.
Also note that $\operatorname{diam}(P_i) \leq K$ for any $i$, and hence if $\cck{k}{X} \cap P_i \neq \emptyset$, then $P_i \subseteq X^{+K} \cup \cck{k}{X}$, and similarly if $\ck{k}{X} \cap P_i \neq \emptyset$, then $P_i \subseteq X^{+K} \cup \ck{k}{X}$.

Let $j_1, \ldots, j_m$ be the sequence of indices $i$ for which $P_i \cap \cck{k}{X} \neq \emptyset$, and consider the sequence $\gamma_{j_1}, \ldots, \gamma_{j_m}$. We claim that an identical sequence of $P$-expansions and foldings are performable on $X^{+K}$ (in this order, without the need to perform any other operations).

Indeed, note that $P_1 \cup \cdots \cup P_{j_1-1} \subseteq X^{+K} \cup \ck{k}{X}$, hence $D_{j_1} \subseteq X^{+K} \cup \ck{k}{X}$, but also $D_{j_1} \subseteq P_{j_1} \subseteq  X^{+K} \cup \cck{k}{X}$, hence  $D_{j_1} \subseteq X^{+K}$, and therefore an operation identical to $\gamma_{j_1}$ can be performed on $X^{+K}$. Observe that $\gamma_{j_1+1}, \ldots, \gamma_{j_2-1}$ do not change the new subgraph arising from the operation $\gamma_{j_1}$, as $P_i \subseteq X^{+K} \cup \ck{k}{X}$ for all $j_1 <i <j_2$.

By induction, suppose a sequence of operations identical to $\gamma_{j_1}, \ldots \gamma_{j_l}$ are performable on $X^{+K}$, where $1\leq l< m$, and suppose $\gamma_{j_l+1}, \ldots, \gamma_{j_{l+1}-1}$ do not alter the subgraph arising from the operations $\gamma_{j_1}, \ldots \gamma_{j_l}$. Then 
$$D_{j_{l+1}} \subseteq  \bigcup_{i=1}^{j_{l+1}-1}P_i \cap (X^{+K} \cup \cck{k}{X})\subseteq\left(\bigcup_{i=1}^{l}P_{j_i} \cap (X^{+K} \cup \cck{k}{X})\right) \cup X^{+K} \subseteq \left(\bigcup_{i=1}^{l}P_{j_i}\right) \cup X^{+K},$$
an therefore an operation identical to $\gamma_{j_{l+1}}$ can be performed after $\gamma_{j_1}, \ldots \gamma_{j_l}$ on $X^{+K}$. Again, for any $j_{l+1} <i < j_{l+2}$, we have $P_i \subseteq X^{+K} \cup \ck{k}{X}$, hence $\gamma_{j_{l+1}+1}, \ldots, \gamma_{j_{l+2}-1}$ do not alter any of the changes arising from any of the operations $\gamma_{j_1}, \ldots \gamma_{j_{l+1}}$. This proves that a sequence of operations identical to $\gamma_{j_1}, \ldots \gamma_{j_m}$ are indeed performable on $X^{+K}$, and therefore is performed during the construction of $\exp(X^{+K})$.

To prove that $\cck{k}{X}$ is contained in the image of $\varphi$, suppose $e$ is an edge of $\cck{k}{X}$. Take a minimal sequence $\gamma_1, \gamma_2, \ldots, \gamma_n$ of $P$-expansions and foldings of $M \cup g \cup X^{+K}$ which yield $e$.  By the minimality of the sequence, $\gamma_n$ is the $P$-expansion that yields $e$. Take the subsequence $\gamma_{j_1}, \ldots, \gamma_{j_k}$ defined as above, note that since $P_n \cap  \cck{k}{X} \neq \emptyset$, we have $j_k=n$. Therefore an operation identical to $\gamma_n$ is performed in the construction of $\exp(X^{+K})$, and this yields a preimage of $e$.

For the injectivity of the restriction of $\varphi$ to $\varphi^{-1}(\cck{k}{X})$, suppose $e_1, e_2$ are distinct edges of $\varphi^{-1}(\cck{k}{X}) \subseteq \exp(X^{+K})$. Sequences of operations identical to the ones that yield $e_1$ and $e_2$ in $\exp(X^{+K})$ are performable on $X^{+K}$ in $M \cup g \cup X^{+K}$, yielding edges $e_1'$ and $e_2'$ in some approximate graph of $S(w)$. As $\varphi$ is a labeled graph morphism fixing $X^{+K}$, it must be that the image of $e_i'$ in $\S(w)$ is $\varphi(e_i)$ $(i=1,2)$. Suppose $\varphi(e_1)=\varphi(e_2)$. Then there must be a folding $\gamma$ performed during the construction of $S(w)$ which identifies $e_1'$ and $e_2'$, and $\gamma$ involves the single edge $\varphi(e_1)=\varphi(e_2) \in \cck{k}{X}$. Hence
by the same argument as above, an operation identical to $\gamma$ is performed in the construction of $\exp(X^{+K})$, but this should have identified $e_1$ and $e_2$, which is a contradiction.
\qed

For the rest of the paper, we identify $\cck{k}{X}$ with its image under $\varphi^{-1}$ and consider it as a subgraph of $\Exp(X^{+K})$, in notation, $\cck{k}{X} \subseteq\Exp(X^{+K})$.

Define a \emph{coloring} on the vertices $X^{+K}$ the following way: let vertices of $X^{+K} \cap \cck{k}{X}$ be red, the rest blue. 

\bl
\label{color}
Denote the connected components of the red vertices in $\Exp (X^{+K}) \setminus X$ by $\Gamma^c$. Then $\varphi^{-1}(\cck{k}{X})=\Gamma^c$.
\el

\proof
Note it suffices to prove that the set of vertices of the two subgraphs are equal, as both $\Gamma^c$ and $\varphi^{-1}(\cck{k}{X})$ are induced subgraphs of $\Exp (X^{+K}) \setminus X$ by definition.

$V(\varphi^{-1}(\cck{k}{X}))\subseteq V(\Gamma^c)$:
Let $u \in V(\cck{k}{X})$, and take a red vertex $v \in X^{+K} \cap \cck{k}{X}$ in the connected component of $u$. Let $p$ be a path from $v$ to $u$ in $\cck{k}{X}$. Since $\varphi$ is a an isomorphism, $\varphi^{-1}(p)$ is a path from $\varphi^{-1}(v)=v$ to $\varphi^{-1}(u)$ in $\varphi^{-1}(\cck{k}{X}) \subseteq \exp(X^{+K}) \setminus X$, therefore $u$ is the same component of $\exp(X^{+K}) \setminus X$ as $v$.

$V(\varphi^{-1}(\cck{k}{X}))\supseteq V(\Gamma^c)$:
Let $u \in V(\Gamma^c)$, we prove $\varphi(u) \in \cck{k}{X}$ by induction on $d(u, X^{+K})$ in $\exp(X^{+K}) \setminus X$. 
If  $d(u, X^{+K})=0$, that is, $u \in X^{+K} \cap \cck{k}{X}$, then $\varphi(u)=u\in \cck{k}{X}$.
Suppose that the statement holds for all points $u'$ with $d(u', X^{+K})\leq k-1$, take a point $u \in \cck{k}{X}$ with $d(u, X^{+K})=k$, and let $q$ be a geodesic of length $k$ from $X^{+K}\cap \cck{k}{X}$ to $u$ in $\exp(X^{+K}) \setminus X$. In particular, $q$ must lie in $\Gamma^c$. Note that $\varphi(q)$ is a path in $S(w)$ from $X^{+K}\cap \cck{k}{X}$ to $\varphi(u)$, hence to see $\varphi(u) \in \cck{k}{X}$, we only need to show that $\varphi(q) \cap X =\emptyset$.

Let the sequence of vertices $q$ traverses be $u_0, \ldots, u_{k-1}, u_k=u$. Note that $u_1, \ldots, u_k \notin X^{+K} \cap \cck{k}{X}$ (otherwise $q$ would not be a geodesic), therefore $d(u_i, X) \geq K$ for all $i=0,\ldots,k$, that is, $u_i \notin X^{K-1}$ for all $i=0,1,\ldots,k$.
By the induction hypothesis, $\varphi(u_{i}) \in \cck{k}{X}$ for $i=0, \ldots, k-1$, furthermore, $\varphi(u_{i}) \notin X^{K-1}$ for $i=0, \ldots, k-1$, as each vertex in $X^{K-1} \cap \cck{k}{X}$ has only one preimage  --- itself --- by the injectivity of $\varphi|_{\varphi^{-1}(\cck{k}{X})}$. Since $K\geq 2$, and $d(\varphi(u_{n-1}), \varphi(u))=1$, this implies $\varphi(u) \notin X$, therefore $\varphi(u) \in \cck{k}{X}$.
\qed

Let $X, Y$ be subgraphs of $\S(w)$ which do not intersect $M$. If $X^{+K}$ and $Y^{+K}$ are isomorphic as vertex-colored graphs (with the red-blue coloring introduced above), we put $X^{+K} \cong_c Y^{+K}$.

\bl
\label{phrat1}
If
$X^{+K} \cong_c Y^{+K}$, then $X^{+K} \cup \cc0{X} \cong Y^{+K} \cup \cc0{Y}$.
\el

\proof
By Lemmas \ref{explem} and \ref{color}, $X^{+K} \cup \cc0{X} \subseteq \exp(X^{+K})$, moreover, $\cc0{X}$ consists of all the connected components of $\exp(X^{+K}) \setminus X$ containing red vertices. Since $\exp(X^{+K}) \cong_c \exp(Y^{+K})$ follows clearly from the assumption, this proves our statement.
\qed
}

\subsection{Geodesics}
In this subsection, we prove that if $\S(w)$ is tree-like and the presentation is finite, then the set of geodesics in $\S(w)$ starting at $x_0$ is rational.
We denote this set by ${\rm Geo}(w)$, more precisely, it is the set of all $u \in (A \cup A\inv)^*$ such that $(ww\inv)\tau \mapright{u} (ww\inv u)\tau$ is a path in $\S(w)$, and $u$ has minimum length among all words labelling paths from $(ww\inv)\tau$ to $(ww\inv u)\tau$ in $\S(w)$.


Suppose $A$ is finite. 
We begin by showing that being tree-like does not imply rational geodesics if $R$ is not finite. By Theorem \ref{phrat}, this means being finitely generated tree-like does not imply finite presentability, in contrast with virtually free or even hyperbolic groups.

\be
\label{irrtree}
Consider the inverse monoid presentation 
\beq
\label{irrtree1}
{\rm Inv}\langle a,b,c \mid aa\inv b^{n^2}cc\inv b^{-n^2} = aa\inv \; (n \geq 1)\rangle.
\eeq 
Then ${\rm Geo}(aa\inv)$ is not a rational language.
\ee

Indeed, it is straightforward to check that 
$${\rm Geo}(aa\inv) = \{ 1,a\} \cup b^* \cup \{ b^{n^2}c \mid n \geq 1 \},$$
and so it fails the conditions of the pumping lemma, satisfied by all rational languages. 

Furthermore as both $aa\inv b^{n^2}cc\inv b^{-n^2}$ and $aa\inv$ reduce to $1$ in the free group, by \cite[Lemma 1.6]{MM} the Sch\"utzenberger graphs of the inverse monoid defined by the presentation in Example \ref{irrtree} are trees.

\bt
\label{phrat}
Let ${\rm Inv} \langle A \mid r_1=s_1,\ldots, r_m=s_m \rangle$ be a finite inverse monoid presentation, and let $w \in (A \cup A\inv)^*$. If $\S(w)$ is tree-like, then ${\rm Geo}(w)$ is a rational language.
\et

\proof
Since $\S(w)$ is tree-like, there exists a constant $\delta$ satisfying condition (6) in Proposition \ref{prop_tree}.
We follow the same notation as before: put $x_0=(ww\inv)\tau$ and $K = \max\{2, |r_1|,\ldots,|r_m|,|s_1|,\ldots,|s_m|\}$.
If $S(w)$ is finite, then so is ${\rm Geo}(w)$, so it is certainly rational. Otherwise,
let $x \in V(\S(w))$ be such that 
$d(x,x_0) > \delta + K$. The idea of the proof is establishing that $D_{\delta+K}(x)$ and a finite amount of adjoint information determine $C(x_0,x)$ completely, yielding finitely many cone types. This enables us to factor the possibly infinite automaton of all geodesics to one with finitely many states.

Let $\theta_x:V(D_{\delta +K}(x)) \to \mathbb{Z}$ be defined by $\theta_x(y) = d(y,x_0)-d(x,x_0)$. It follows from the triangle inequality that $|\theta_x(y)| \leq \delta+K$. 

We write $D_{\delta +K}(x) \equiv D_{\delta +K}(x')$ if there exists an isomorphism of vertex-colored edge-labeled graphs $\p:D_{\delta +K}(x) \to D_{\delta +K}(x')$ such that $\theta_x = \theta_{x'} \circ \p$ and $\varphi(x)=x'$.
This means that, in addition to them being isomorphic, the distribution of their vertices with respect to the different distance levels from $x_0$ differs only in the constant $d(x,x_0)-d(x',x_0)$. 

Note that for any $x$, $|D_{\delta +K}(x)|$ is at most $1+2|A|+2|A|^2+\ldots+2|A|^{\delta+K}$, and there are only finitely many ways $D_{\delta +K}(x)$ could possibly be colored and a $V(D_{\delta +K}(x)) \to \mathbb Z$ function could possibly be defined, therefore there are only finitely many equivalence classes corresponding to the relation $\equiv$.

\bl
\label{phrat2}
For any vertices $x,x' \in \S(w) \setminus D_{\delta}(x_0)$, if $D_{\delta +K}(x) \equiv D_{\delta +K}(x')$, then there exists an isomorphism $\psi \colon C(x_0,x) \to C(x_0,x')$ such that $\psi(x)=x'$.
\el

Let us extend the domain of $\theta_x$ to $\cc0{D_{\delta +K}(x)}$ using the same formula. We have seen in Lemma \ref{phrat1} that $\varphi$ extends to an isomorphism $\tilde\varphi$ between $\cc0{D_{\delta +K}(x)} \cup D_{\delta +K}(x)$ and $\cc0{D_{\delta +K}(x')} \cup D_{\delta +K}(x')$. We claim that $\theta_x = \theta_{x'} \circ \tilde\p$ also holds.
Indeed, let $y \in \cc0{D_{\delta +K}(x)}$, and take a geodesic $[x_0, y]$. This intersects $D_\delta(x)$, let $v$ be a vertex in the intersection. 
Then 
\begin{eqnarray*}
\theta_x(y)&=&d(y,x_0)-d(x,x_0)=d(y,v)+d(v,x_0)-d(x,x_0)=d(y,v)+\theta_x(v)\\
&=&d(\tilde\varphi(y),\tilde\varphi(v))+\theta_{x'}(\tilde\varphi(v))=d(\tilde\varphi(y), \tilde\varphi(v))+d(\tilde\varphi(v),x_0)-d(x',x_0)\\
&\geq&d(\tilde\varphi(y),x_0)-d(x',x_0)=\theta_{x'}(\tilde\varphi(y)).
\end{eqnarray*}
The opposite inequality follows from a similar argument.

\qed

This implies that $\S(w)$ has finitely many isomorphism classes of cones, as we only have finitely many $\equiv$-classes of $(\delta+K)$-discs.

We define an equivalence relation $\rho$ on $\S(w)$ as follows. Given $x,x' \in \S(w)$, we write $x\ \rho\ x'$ if $x = x'$ or
$$x,x' \notin D_{\delta}(x_0) \mbox{ and } C(x_0,x) \stackrel{\psi}{\cong} C(x_0,x') \mbox{ with } \psi(x)=x'.$$

Let $\S_{\rm Geo}(w)$ be the subgraph of $\S(w)$ spanned by all geodesics $[x_0,x]$ where $x \in V(\S(w))$ (inverse edges are now not included). If we set $x_0$ as initial vertex and all vertices as terminal, we get an automaton $\A_{Geo}(w)$ which recognizes ${\rm Geo}(w)$. We claim that $\rho$ is a congruence of $\A_{Geo}(w)$.

Indeed, suppose $p_1 \mapright{a} q_1$ and $p_2 \mapright{a} q_2$ are edges of $\A_{Geo}(w)$ for which $p_1\ \rho\ p_2$, we need that $q_1\ \rho\ q_2$ also holds. If $p_1=p_2$, then $q_1=q_2$ and the implication is clear. Suppose that $p_1,p_2 \notin D_{\delta}(x_0)$, and suppose there exists an isomorphism $\psi: C(x_0,p_1) \to C(x_0,p_2)$ with $\psi(p_1)=p_2$. Note that $C(x_0, q_i) \subseteq C(x_0,p_i)$, and as $\psi$ respects the labels, $\psi(q_1)=q_2$, and $\psi(C(x_0, q_1))=C(x_0, q_2)$. Thus we have $q_1\ \rho\ q_2$ as needed.

It follows that we can factor $\A_{Geo}(w)$ by the automaton congruence $\rho$ and get a quotient automaton $\A_{Geo}(w)/\rho$ recognizing the same language. Since $\rho$ is a finite index congruence by Lemma \ref{phrat2} and the remark above it, the automaton is finite, hence ${\rm Geo}(w)$ is rational as claimed.
\qed

The theorem implies that the inverse monoid in Example \ref{irrtree} cannot be finitely presented, yielding the following corollary:

\bc
\label{nonfp}
There exists a finitely generated tree-like inverse monoid which is not finitely presentable.
\ec

\subsection{The word problem}

We now go on to prove that finitely presented tree-like inverse monoids have a uniformly solvable word problem, moreover, the language of their Sch\"utzenberger automata is not only decidable, but context-free.

Let $ {\rm Inv} \langle A \mid r_1=s_1,\ldots, r_m=s_m \rangle$ be a finite inverse monoid presentation fixed for the rest of the section, and put $K = \max\{2,|r_1|,\ldots,|r_m|,|s_1|,\ldots,|s_m|\}$. Let us fix a word $w \in (A \cup A^{-1})^\ast$. 
{We call a graph $\Gamma$ \emph{$P$-complete} if no $P$-expansions can be performed on it. Given any graph $\Gamma$, if $\Gamma'$ is a subgraph of $\Gamma$ such that for any relation $r=s$, and any path $p_s$ in $\Gamma'$ labeled by $s$, there exists a parallel path labeled by $t$ in $\Gamma$, we say that $\Gamma'$ is \emph{relatively $P$-complete} in $\Gamma$.}

Before giving the precise proofs, we outline the very rough idea used throughout the section. 
{Due to the tree-like geometry of $\S(w)$, during the usual process of approximating $\S(w)$, eventually we get to a point where in our finite deterministic approximating graph $S$, there exist some subgraphs $Y_1, \ldots, Y_n$ of a universally bounded diameter such that $\bigcap\limits_{i=1}^{n} \x0{Y_i}$ is relatively $P$-complete in $S$.} Recall that by Lemma \ref{explem}, the set $\Gamma^c(x_0, Y_i)$ of $\S(w)$ only depends on $Y_i^{+K}$. Suppose that an isomorphic copy $X_i^{+K}$ of $Y_i^{+K}$ exists in the relatively $P$-complete part. Then $\S(w)$ can be obtained from the approximate graph by iteratively gluing copies of $\Gamma^c(x_0, X_i)$ to $Y_i^{+K}$ in the way the isomorphism dictates. We will call approximate graphs with the above property sapling, and the formal definition is the following.

\begin{figure}
\includegraphics[width=0.7\linewidth]{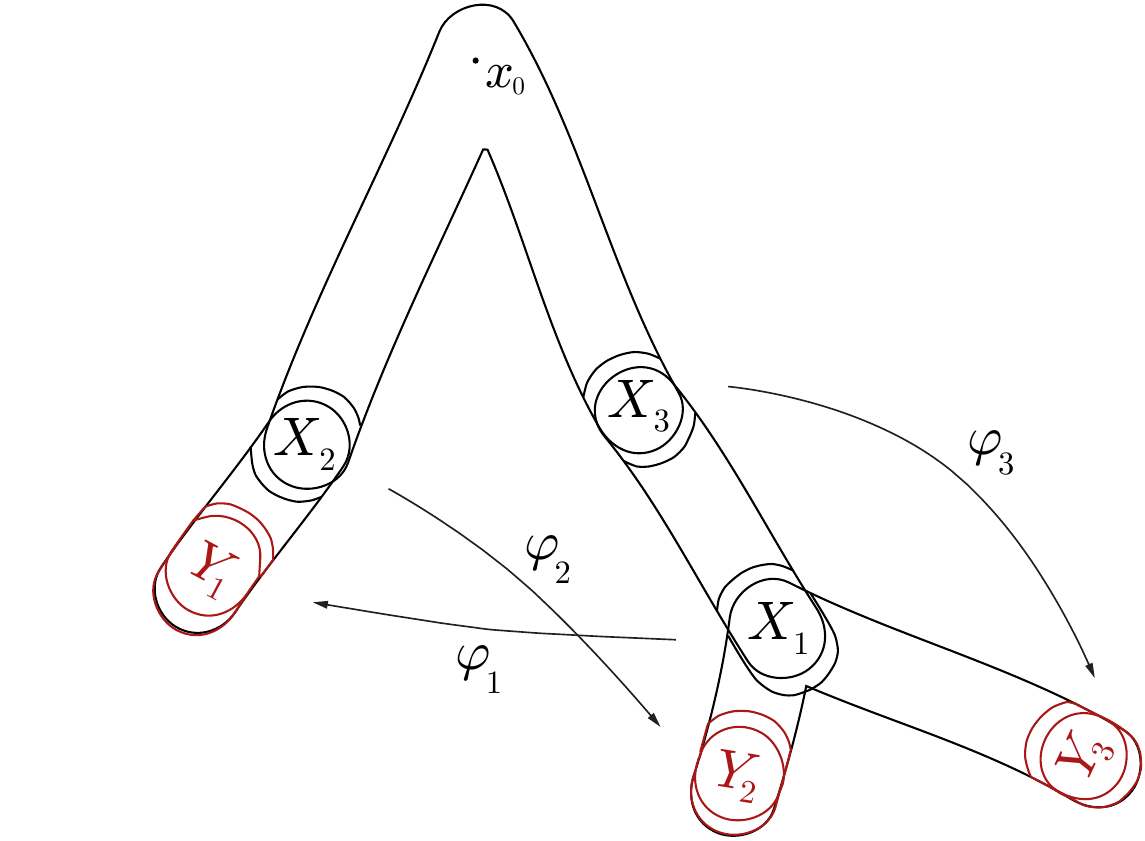}
\caption{A sapling $S$, with the sets $X_i$, $Y_i$, $X_i^{+K}$ and $Y_i^{+K}$ circled.}
\end{figure}

\begin{Def}
\label{saplingdef}
Let $S$ be a finite deterministic approximate graph of $\S(w)$. Then $S$ contains a path starting at $x_0$, labeled by $w$ -- denote this by $M$. We call $S$ a \emph{sapling} if there exist induced connected subgraphs $Y_1,\ldots,Y_n$ of $S \setminus M$ such that
\begin{enumerate}
\item $\bigcap\limits_{i=1}^{n} \x0{Y_i}$ is relatively $P$-complete in $S$
\item for all $1\leq i \leq n$, $\cc0{Y_i}\subseteq Y_i^{+K}$,
\item for any $1 \leq i < j \leq n$, $d(\cc0{Y_i} \cup Y_i,\cc0{Y_j} \cup Y_j)\geq 2$,
\item for all $1\leq i \leq n$, there exists an induced subgraph $X_i$ of $S \setminus M$  such that $X_i^{+K} \subseteq  \bigcap\limits_{j=1}^{n}\x0{Y_j}$, and $X_i^{+K}\cong_c Y_i^{+K}$ 
\item there exists some $k \in \mathbb N$ such that
$X_i \cup \cc0{X_i}$ is isomorphic to a subgraph of $\exp_k(X_i^{+K})$ via an isomorphism that fixes $X_i$.
\end{enumerate}
\end{Def}

For the last condition, we can identify $\cc0{X_i}$ with its image under the isomorphism and write $\cc0{X_i} \subseteq \exp_k(X_i^{+K})$ for short.

Let $S$ be a sapling, and consider the sets $X_1, \ldots, X_n$, $Y_1, \ldots, Y_n$ as given by the definition. Denote the color preserving isomorphisms $X_i^{+K} \to Y_i^{+K}$ by $\varphi_i$. We define a new graph $\tilde{S}$ by ``gluing'' new copies of $\cc0{X_i}$ to $Y_i^{+K}$ as prescribed by $\varphi_i$. More precisely, for each $i$, let $\psi_i(\cc0{X_i})$ be an isomorphic copy of $\cc0{X_i}$ under the isomorphism $\psi_i$, disjoint from $S$ and from each other. Consider the graph $S \cup \bigsqcup_i \psi_i(\cc0{X_i})$, and let $\sim$ be an equivalence relation on its sets of edges and vertices generated by $\varphi_i(t) \sim \psi_i(t)$ whenever $t$ is a vertex or edge of $X_i^{+K} \cap \cc0{X_i}$. Define 
$$\tilde{S}=(S \cup \bigsqcup_i \mathop{}\psi_i(\cc0{X_i}))/\sim.$$
Note that as $\sim$ restricted to $S$ is the identity relation, $S \subseteq \tilde{S}$, and similarly, $\psi_i(\cc0{X_i}) \subseteq \tilde S$.

\begin{figure}
\includegraphics[width=\linewidth]{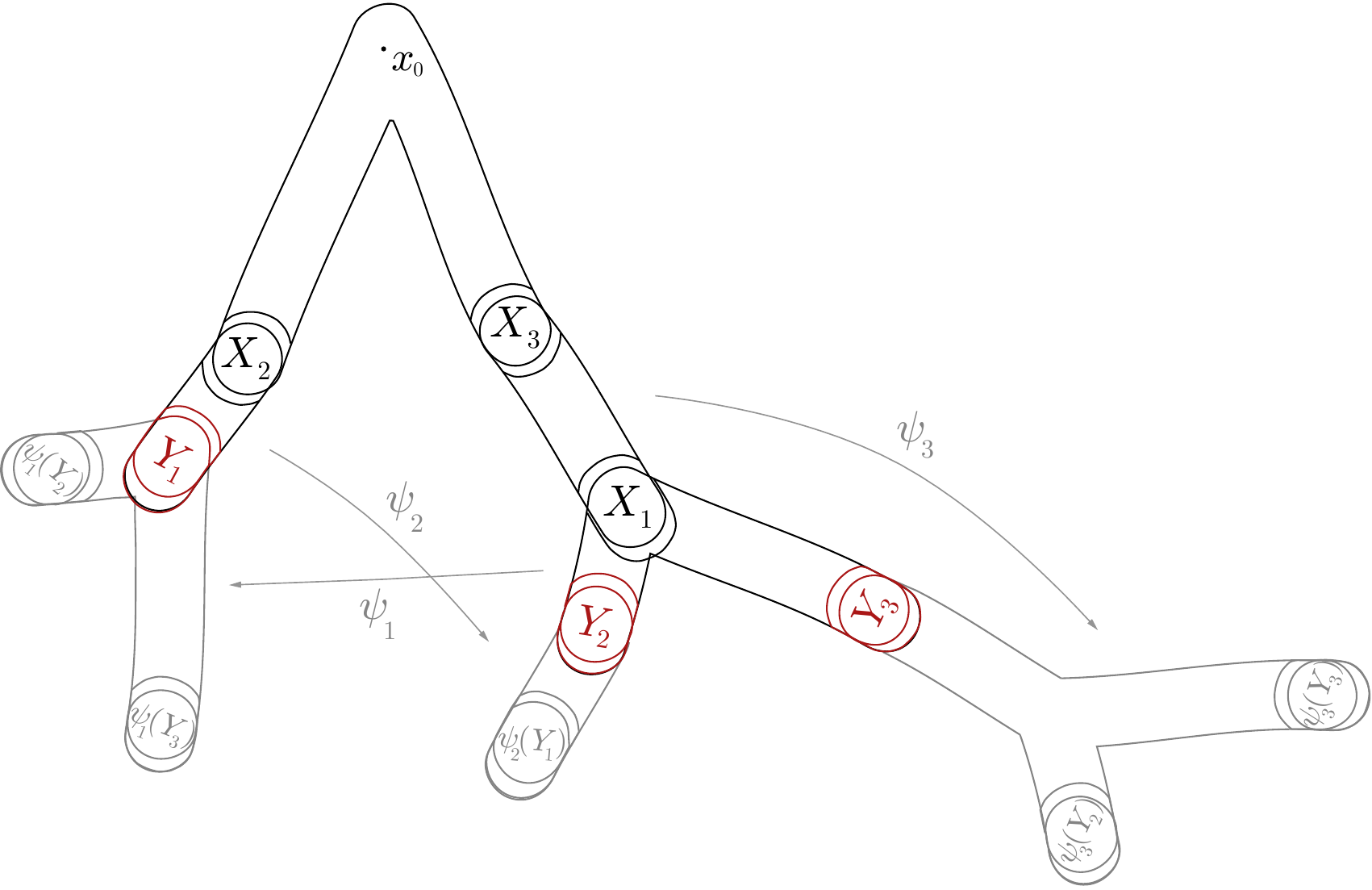}
\caption{A new sapling $\tilde{S}$.}
\end{figure}

In the sequel, we reserve the notations $\Gamma(x_0,X)$ $(X \subseteq V(S))$ to denote the respective subgraphs defined in $S$. For the similarly defined subgraphs of $\tilde{S}$, we will use the notation $\tilde{\Gamma}(x_0,X)$.

Notice that as $\varphi$ was color-preserving, $\varphi_i \cup \psi_i$ is a color-preserving isomorphism from $X_i^{+K} \cup \Gamma^c(x_0,X_i)$ to $Y_i^{+K} \cup \tilde{\Gamma}^c(x_0,Y_i)$, and $(\varphi_i \cup \psi_i)(X_i^{+K})=Y_i^{+K}$. Furthermore $(\varphi_i \cup \psi_i)(\Gamma^c(x_0,X_i))=\tilde{\Gamma}^c(x_0,Y_i)$: indeed, since $\Gamma^c(x_0,X_i)$ is the component of the red vertices in $(X_i^{+K} \cup \Gamma^c(x_0,X_i)) \setminus X_i$, we have that  $(\varphi_i \cup \psi_i)(\Gamma^c(x_0,X_i))$ is the component of the red vertices of $Y_i^{+K}$ in $(Y_i^{+K} \cup \tilde{\Gamma}^c(x_0,Y_i)) \setminus Y_i$, which is by definition $\tilde{\Gamma}^c(x_0,Y_i)$. We also have $(\varphi_i \cup \psi_i)(\Gamma^c(x_0, Y_j))=\psi_i(\Gamma^c(x_0, Y_j))=\tilde{\Gamma}^c(x_0,\psi_i(Y_j))$.

\begin{Lemma}
\label{saplem}
If $S$ is a sapling, then so is $\tilde{S}$.
\end{Lemma}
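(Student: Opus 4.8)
The plan is to verify that the enlarged graph $\tilde{S}$ satisfies all five conditions of Definition \ref{saplingdef}, using the \textit{same} subgraphs $X_1,\ldots,X_n$ for the new sapling, but with \emph{new} witnessing subgraphs for the role of the $Y_i$. The crucial structural observation to establish first is that $\tilde{S}$ is itself a valid finite deterministic approximate graph of $\S(w)$: by Lemma \ref{phrat1} (applied via $X_i^{+K} \cong_c Y_i^{+K}$, which gives $Y_i^{+K} \cup \cc0{Y_i} \cong X_i^{+K} \cup \cc0{X_i}$), the freshly glued copies $\psi_i(\cc0{X_i})$ are exactly isomorphic to the components $\cck{k}{Y_i}$ that $\S(w)$ attaches at $Y_i^{+K}$. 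Hence $\tilde S$ embeds into $\S(w)$ as an induced subgraph; determinism is preserved because the gluing along $\sim$ identifies precisely the overlapping $X_i^{+K}\cap\cc0{X_i}$ via the color-preserving $\varphi_i$, so no new folding is forced. I would record this embedding explicitly, as every subsequent distance and $P$-completeness claim is computed inside $\S(w)$.

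Next I would \textbf{choose the new distinguished subgraphs}. The natural candidates for the new $Y$'s are the copies $\psi_i(X_i)$ sitting at the far end of each freshly glued piece $\psi_i(\cc0{X_i})$; call them $Y_i'$. Because $\psi_i$ is an isomorphism onto $\cc0{X_i}$ taking $X_i$ to $Y_i'$ and carrying the coloring, we get $(Y_i')^{+K}\cong_c X_i^{+K}$ for free, which is condition (4) with the same $X_i$. Condition (2), that $\cc0{Y_i'}\subseteq (Y_i')^{+K}$, should follow because the material hanging off $Y_i'$ away from $x_0$ is, by construction of the gluing, again a copy of $\cc0{X_i}$ attached at $(Y_i')^{+K}$, matching what Lemma \ref{explem} forces. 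Condition (5) transfers verbatim since the $X_i$ are unchanged. For condition (1), I would argue that $\bigcap_i \c0{Y_i'}$ now contains the old $\bigcap_i \c0{Y_i}$ together with all the newly attached collars up to $(Y_i')^{+K}$; since the old intersection was relatively $P$-complete in $S$ and the only new paths live inside the glued copies (which are isomorphic to honest pieces of $\S(w)$ and therefore introduce no performable $P$-expansion that was not already accounted for), relative $P$-completeness is preserved in $\tilde S$.

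The \textbf{main obstacle} I anticipate is condition (3): the pairwise-separation requirement $d(\cc0{Y_i'}\cup Y_i', \cc0{Y_j'}\cup Y_j')\geq 2$ for $i\neq j$, which must be checked in the metric of $\tilde S$ (equivalently of $\S(w)$). The danger is that two distinct glued collars, or a collar and a different distinguished region, could be brought close together by a short path routing through the relatively $P$-complete core. Here I would lean on the tree-like geometry: the original separation hypothesis (condition (3) for $S$), combined with the fact that the glued pieces $\psi_i(\cc0{X_i})$ meet $S$ only along the disjoint regions $Y_i^{+K}$ (which were themselves $\geq 2$-separated by assumption), forces any path between two different new collars to pass through $\bigcap_i\c0{Y_j}$ and hence to have length at least that inherited separation. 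Making this routing argument rigorous — showing that every path between distinct collars must traverse the core and cannot shortcut through the identified overlap vertices — is the delicate point, and I would handle it by a case analysis on which collar (new or inherited) an endpoint lies in, using that $\tilde S\hookrightarrow \S(w)$ is an \emph{induced} embedding so that distances in $\tilde S$ agree with distances along paths that stay in $\tilde S$.

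Once all five conditions are verified with the data $(X_i)_i$ and $(Y_i')_i$, the graph $\tilde S$ is a sapling by definition, completing the proof. I expect the bookkeeping for conditions (2), (4), (5) to be short and essentially formal via the isomorphisms $\psi_i$, with the real content concentrated in the $P$-completeness claim (1) and the separation claim (3).
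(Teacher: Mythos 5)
Your overall strategy --- check the five conditions of Definition \ref{saplingdef} for $\tilde S$ after fixing explicit new witnessing subgraphs --- is the right one, and your treatment of determinism and of $\tilde S$ being an approximate graph is in line with the paper's. But your choice of the new distinguished subgraphs contains a genuine error which the rest of the argument inherits. You set $Y_i' = \psi_i(X_i)$, ``the copy of $X_i$ at the far end of the glued piece''. This is not defined: $\psi_i$ has domain $\Gamma^c(x_0,X_i)$, which by definition is a subgraph of $S \setminus X_i$, so $X_i$ is disjoint from the domain of $\psi_i$. Worse, the geometric picture behind the choice is backwards. The glued copy $\psi_i(\Gamma^c(x_0,X_i))$ is attached to $S$ along its \emph{near} end: the identification $\varphi_i(t)\sim\psi_i(t)$ for $t \in X_i^{+K}\cap\Gamma^c(x_0,X_i)$ glues the collar of the copy onto $Y_i^{+K}\cap\Gamma^c(x_0,Y_i)$, so the position a ``copy of $X_i$'' would occupy is exactly the old $Y_i$ (extending $\psi_i$ by $\varphi_i$, as the paper does, gives $\psi_i(X_i)=\varphi_i(X_i)=Y_i$). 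With that only sensible reading, your new regions are the old $Y_i$'s, and condition (2) then fails in $\tilde S$: the part of $\tilde S\setminus Y_i$ away from $x_0$ is now the whole glued piece $\psi_i(\Gamma^c(x_0,X_i))$, which in general is not contained in $Y_i^{+K}$. Your justification of (2) --- that the material hanging off $Y_i'$ is ``again a copy of $\Gamma^c(x_0,X_i)$ attached at $(Y_i')^{+K}$'' --- tacitly assumes $\tilde S$ carries a second layer of glued material; the construction attaches exactly one layer (the infinite iteration only appears later, in Theorem \ref{sapunion}).

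The far ends of the glued piece $\psi_i(\Gamma^c(x_0,X_i))$ are not a copy of $X_i$: they are the copies $\psi_i(Y_j)$ of those old frontier regions $Y_j$ contained in $\Gamma^c(x_0,X_i)$, together with their truncated collars $\psi_i(\Gamma^c(x_0,Y_j))$. These pair-indexed sets $\tilde Y_{ij}=\psi_i(Y_j)$ are the paper's new distinguished regions (note that one glued piece can contribute several, so a single index $i$ as in your proposal cannot suffice), and the paper takes as their reference copies in the core the old $Y_j$'s themselves, $\tilde X_{ij}=Y_j$, with the color-preserving isomorphism $\psi_i|_{Y_j^{+K}}$ --- not your unchanged $X_j$'s. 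This choice is what makes conditions (2) and (5) immediate: $\tilde{\Gamma}^c(x_0,\psi_i(Y_j)) = \psi_i(\Gamma^c(x_0,Y_j)) \subseteq \psi_i(Y_j)^{+K}$, and $\tilde{\Gamma}^c(x_0,Y_j) = \psi_j(\Gamma^c(x_0,X_j)) \subseteq \exp_k(Y_j^{+K})$; keeping the old $X_j$'s would instead break (5) as stated, since in $\tilde S$ the subgraph hanging off $X_j$ has grown by the glued pieces attached inside $\Gamma^c(x_0,X_j)$. Finally, your appeal to ``tree-like geometry'' for the separation condition (3) is not available: tree-likeness of $\S(w)$ is not a hypothesis of Lemma \ref{saplem} (indeed, Theorem \ref{esaptreelike} derives tree-likeness \emph{from} the existence of a sapling). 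In the paper, (3) is inherited purely combinatorially: regions inside one glued piece are $\geq 2$-separated because $\psi_i$ is an isomorphism, and regions in different glued pieces are $\geq 2$-separated because the pieces lie inside the sets $\Gamma^c(x_0,Y_{i_1})$ and $\Gamma^c(x_0,Y_{i_2})$, which are $\geq 2$-separated by the old condition (3).
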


\proof
The first thing we need to check is that $\tilde{S}$ is deterministic. Seeking a contradiction, suppose it is not. As $S$ and $\bigsqcup_i \psi_i(\Gamma^c(x_0,X_i))$ are both deterministic, the only possible way for $\tilde{S}$ to not be deterministic is if $\sim$ identifies the initial vertices of two edges of the same label, but not the edges themselves. {Since the sets $\cc0{Y_i}$ are pairwise disjoint, so are the images of $\psi_i(\Gamma^c(x_0,X_i))$ in $\tilde S$, therefore such an identification can only involve an edge of $S$ and an edge of $\psi_i(\Gamma^c(x_0,X_i))$ for some $i$.} Suppose this is the case, let $e \in S$ and $f \in \psi_i(\Gamma^c(x_0,X_i))$ be such edges. 
Then $\alpha(e) \in Y_i^{+K} \cap \Gamma^c(x_0,Y_i)$, and $\alpha(f) \in \psi_i(X_i^{+K} \cap \Gamma^c(x_0,X_i))$ with $\psi_i^{-1}(\alpha(f))=\varphi_i^{-1}(\alpha(e))$. Note that $e \in Y_i^{+K}$. Indeed, if $e \notin Y_i^{+K}\cap \Gamma^c(x_0,Y_i)=\Gamma^c(x_0,Y_i)$, then $\omega(e) \in Y_i$, hence $e \in Y_i^{+K}$. Therefore, $\varphi_i^{-1}(e)$ is an edge in $X_i^{+K}$ starting at $\psi_i^{-1}(\alpha(f))$ with the same label as $f$, hence $\varphi_i^{-1}(e)=\psi_i^{-1}(f)$, so $e$ and $\psi_i(f)$ should be identified by $\sim$. 

To see that $\tilde{S}$ is an approximate graph of $S(w)$, observe that because of the isomorphism $\varphi_i \cup \psi_i$ from $X_i^{+K} \cup \Gamma^c(x_0,X_i)$ to $Y_i^{+K} \cup \tilde{\Gamma}^c(x_0,Y_i)$, since $\Gamma^c(x_0,X_i) \subseteq \exp_k(X_i^{+K})$, we have $\tilde{\Gamma}^c(x_0,Y_i) \subseteq \exp_k(Y_i^{+K})$, that is, all new edges of $\tilde S$ are obtainable from $S$ using $P$-expansions and foldings.

Next we show that the sets $\tilde Y_{ij}=\psi_i(Y_j)$, where $1\leq i,j\leq n$ such that $Y_j \subseteq \cc0{X_i}$ satisfy the properties listed in Definition \ref{saplingdef}.

\textit{1.} Suppose $s=t$ is a relation in $R$, and suppose $s$ labels a path $p_s$ in $\bigcap _{i,j} \tilde{\Gamma}(x_0,\psi_i(Y_j))$. 
If $p_s$ is contained in $\bigcap_i \tilde{\Gamma}(x_0,Y_i)=\bigcap_i \Gamma(x_0,Y_i)$, then by assumption there is a path parallel to $p_s$ labeled by $t$ in $S \subseteq \tilde S$, and we are done. 
Suppose this is not the case. Then $p_s$ must contain a vertex in $Y_k\cup \tilde{\Gamma}^c(x_0,Y_k)$ for some $1 \leq k \leq n$, hence $p_s$ is contained in $(Y_k^{+K} \cup \tilde{\Gamma}^c(x_0,Y_k)) \cap \bigcap _{i,j} \tilde{\Gamma}(x_0,\psi_i(Y_j))$. This graph is isomorphic to $(X_k^{+K} \cup {\Gamma}^c(x_0,X_k)) \cap \bigcap _{i} {\Gamma}(x_0,Y_i)$ by $(\varphi_k \cup \psi_k)^{-1}$. 
Let $p_t$ be the path in $S$ labeled by $t$, parallel to the isomorphic copy of $p_s$ in $\bigcap _{i} {\Gamma}(x_0,Y_i) \cap (X_k^{+K} \cup {\Gamma}^c(x_0,X_k))$. 
Consider the closed path $p_s p_t^{-1}$, this is of length at most $2K$ and contains a vertex in $X_k \cup {\Gamma}^c(x_0,X_k)$, therefore it must be contained in $X_k^{+K} \cup {\Gamma}^c(x_0,X_k)$, hence $(\varphi_k \cup \psi_k)(p_t)$ is the path labeled by $t$ parallel to $p_s$.

\textit{2.} For all $1\leq i,j \leq n$, $\tilde{\Gamma}^c(x_0,\tilde Y_{ij})\subseteq \tilde Y_{ij}^{+K}$: 
since $S$ is a sapling, we have $\psi_i(\Gamma^c(x_0,Y_j))\subseteq \psi_i(Y_j^{+K})$, hence
$$\tilde{\Gamma}^c(x_0,\psi_i(Y_j))=\psi_i(\Gamma^c(x_0,Y_j))\subseteq \psi_i(Y_j^{+K})=\psi_i(Y_j)^{+K}.$$

\textit{3.} For any $1 \leq i_1,i_2, j_1, j_2 \leq n$, if $(i_1,j_1)\neq (i_2, j_2)$, then
$d(\tilde{\Gamma}^c(x_0, \tilde{Y}_{i_1j_1}) \cup \tilde{Y}_{i_1j_1},\tilde{\Gamma}^c(x_0, \tilde{Y}_{i_2j_2}) \cup \tilde{Y}_{i_2j_2})\geq 2$:

Note that for any $i,j$, we have $\tilde{\Gamma}^c(x_0,\psi_i(Y_j)) \cup \psi_i(Y_j)=\psi_i(\Gamma^c(x_0,Y_j) \cup Y_j)\subseteq \cc0{Y_i}$, hence 
if $i_1 \neq i_2$, then 
$$d(\psi_{i_1}(\Gamma^c(x_0,Y_{j_1}) \cup Y_{j_1}),\psi_{i_2}(\Gamma^c(x_0,Y_{j_2}) \cup Y_{j_2}))\geq d(\cc0{Y_{i_1}}, \cc0{Y_{i_2}})\geq 2$$
by assumption.

For the case of $i_1=i_2:=i$, note that
$d(\psi_{i}(\Gamma^c(x_0,Y_{j_1}) \cup Y_{j_1}),\psi_{i}(\Gamma^c(x_0,Y_{j_2}) \cup Y_{j_2}))\geq 2$ is equivalent to saying $(\psi_{i}(\Gamma^c(x_0,Y_{j_1}) \cup Y_{j_1}))^{+1} \cap (\psi_{i}(\Gamma^c(x_0,Y_{j_2}) \cup Y_{j_2}))^{+1}=\emptyset$,
but the left hand side is equal to 
$(\psi_{i}(\Gamma^c(x_0,Y_{j_1}) \cup Y_{j_1})^{+1} \cap (\Gamma^c(x_0,Y_{j_2}) \cup Y_{j_2}))^{+1}=\psi_{i}(\emptyset)=\emptyset$ by assumption.


\textit{4.} We show that the sets $Y_j$ with the new indexing $\tilde X_{ij}=Y_j$, where $1\leq i,j\leq n$ are such that $Y_j \in \cc0{X_i}$, satisfy the conditions. Clearly $\tilde X_{ij} \cap M=\emptyset$, as $\cc0{X_i} \cap M =\emptyset$.

For all $i,j$, $\tilde X_{ij}^{+K} \cong_c \tilde Y_{ij}^{+K}$: by $\psi_i(Y_j^{+K})=\psi_i(Y_j)^{+K}$, the edge-labeled graph $\tilde X_{ij}^{+K}=Y_j^{+K}$ is isomorphic to $\tilde Y_{ij}^{+K}=\psi_i(Y_j)^{+K}$ by $\psi_i$. Furthermore since $\tilde{\Gamma}^c(x_0,\psi_i(Y_j))=\psi_i(\Gamma^c(x_0,Y_j))$, they are also isomorphic as vertex-colored, edge-labeled graphs.

$\tilde X_{ij}^{+K} \subseteq \bigcap_{k,l} \tilde\Gamma(x_0, \tilde Y_{kl})$: note that  $\psi_k(Y_l) \cap S = \emptyset$ for all $k,l$, hence $S \subseteq \bigcap_{k,l} \tilde\Gamma(x_0, \psi_k(Y_l))$, in particular, $Y_j^{+K} \subseteq \bigcap_{k,l} \tilde\Gamma(x_0, \psi_k(Y_l))$.

{\textit{5.}
$\tilde{\Gamma}^c(x_0,\tilde{X}_{ij}) \subseteq \exp_k(\tilde X_{ij}^{+K})$:
$$\tilde{\Gamma}^{c}(x_0,Y_j)=\psi_j(\Gamma^c(x_0, X_j))= \psi_j(\exp_k(X_j^{+K})\cap \Gamma^{c}(x_0,X_j))\subseteq \exp_k(\psi_j(X_j^{+K}))=\exp_k(Y_j^{+K}).$$}
\qed

\bt
\label{sapunion}
Let $\invp\langle A \mid r_1=s_1, \ldots, r_m=s_m\rangle$ be a finite presentation, and let $S$ be a sapling for the word $w \in (A \cup A)^\ast$. Suppose $K = \max\{ |r_1|,\ldots,|r_m|,|s_1|,\ldots,|s_m|\} > 1$. We define a growing sequence of graphs by $S_0=S$, and $S_{i+1}=\tilde S_{i}$ for $i=0,1,2,\ldots$. Then 
$$\bigcup_{i=0}^\infty S_i=\S(w).$$
\et

\proof
It is clear that $\S'(w)=\bigcup\limits_{i=1}^{\infty} S_i$ is an approximate graph of $\S(w)$, as $S_i$ is an approximate subgraph for all $i \in \mathbb N$. 
Furthermore, $\S'(w)$ is deterministic, and 
the construction ensures that $S_i$ is relatively $P$-complete in $S_{i+1}$ for all $i \in \mathbb N$. For any relation $s=t$, and any path $p_s$ in $\S'(w)$ labeled by $s$, the path $p_s$ is contained in a subgraph of the form $S_j$ for some $j$, hence $S_{j+1}\subseteq S'(w)$ a parallel path $p_t$ labeled by $t$. This proves that $\S'(w)$ is also $P$-complete.
\qed

\brm
\label{sapremark}
Note that this implies that a sapling is a finite induced subgraph of $\S(w)$. 
\erm

\bp
\label{sapexist}
If $\invp\langle A \mid R\rangle$ is a finite presentation, and $w \in (A \cup A^{-1})^\ast$ is a word for which $S(w)$ is infinite and tree-like, then there exists a sapling for $w$. 
\ep

\proof
Consider a strong tree decomposition $\P$ of the graph $\S(w)$, and let $T=\S(w)/\P$. Denote the map $V(\S(w)) \to V(T)$, $x \mapsto x/\P$ by $\pi$. We will consider $T$ a rooted tree with root $\pi(x_0)$.
Let $m$ be such that for all sets $X \in \P$, $\operatorname{diam}(X) \leq m$.
There are finitely many $(A \cup A^{-1})$-labeled, red-blue vertex colored digraphs with diameter at most $m+K$. Hence there exists $r \in \mathbb Z^+$ such that for all the labeled digraphs of the form $X^{+K}$, where $X \in P$, an isomorphic copy of the form $X'^{+K}$, where $X' \in P$,  is contained in $D_r(x_0) \subseteq \S(w)$. Assume furthermore that $r$ is greater then the length of $w$. 
Let $y_1, \ldots, y_n$ be the set of vertices of $T\setminus\pi(D_r(x_0))$ which are connected to the leaves of $\pi(D_r(x_0))$ --- there are finitely many such vertices, as $T$ is locally finite. Let $Y_i=\pi^{-1}(y_i)$, and let $D=\pi^{-1}(\pi(D_r(x_0)))$. Note that $D \supseteq D_r(x_0)$. Put $S=D \cup \bigcup\limits_{i=1}^{n}{Y_i}^{+K}$. We claim that $S$ is a sapling with sets $Y_1, \ldots, Y_n$.

It is certainly true that $S$ is a finite connected approximate graph of $\S(w)$.
We reserve $\x0{X}$ to denote subsets of $\S(w)$, the similarly defined subsets of $T$ and $S$ will be denoted by  $\Gamma_T(\pi(x_0),X)$ and $\Gamma_S(x_0, X)$.
Note that for any set $X \in \P\setminus{\pi^{-1}\pi(x_0)}$, $\x0{X}=\pi^{-1}(\Gamma_T(\pi(x_0),\pi(X)))$, and similarly, $\cc0{X}=\pi^{-1}(\Gamma_T^c(\pi(x_0),\pi(X)))$. 
The defining conditions of saplings are satisfied as follows:

1. Note that $\bigcap\limits_{i=1}^n \Gamma_S(x_0, Y_i)=D$. For any path labeled by a relator in $D$, there exists a parallel path labeled by its pair in $\S(w)$, and this is contained in $D^{+K}\subseteq S$ indeed.

2. For any $i$, $\Gamma^c_S(x_0, Y_i)=\Gamma^c(x_0, Y_i) \cap S \subseteq Y_i^{+K}$ by the definition of $S$.

3. In $T$, $d(\Gamma_T^c(\pi(x_0),y_i)\cup y_i, \Gamma_T^c(\pi(x_0),y_j)\cup y_j)\geq 2$ $(1 \leq i<j \leq n)$ by the choice of the $y_i$s, hence $d(\cc0{Y_i} \cup Y_i,\cc0{Y_j} \cup Y_j)\geq 2$
also holds in $\S(w)$, and in $S$.

4. There are certainly sets of vertices $X_i \in P$ for any $i=1, \ldots, n$ with $X_i^{+K}\cong_c Y_i^{+K}$ and $X_i^{+K} \subseteq D$ by the definition of $D$. 

5. By Lemma \ref{explem}, $\cc0{X_i} \subseteq \exp(X_i^{+K})$ holds in $\S(w)$, and therefore $\Gamma^c_S(x_0, X_i)=\Gamma^c(x_0, X_i) \cap S \subseteq \exp(X_i^{+K})$. Since $\Gamma^c_S(x_0, X_i)$ is finite, there exists some $k \in \mathbb N$ such that $\Gamma^c_S(x_0, X_i) \subseteq \exp_k(X_i^{+K})$ indeed.
\qed

\begin{figure}
\includegraphics[width=\linewidth]{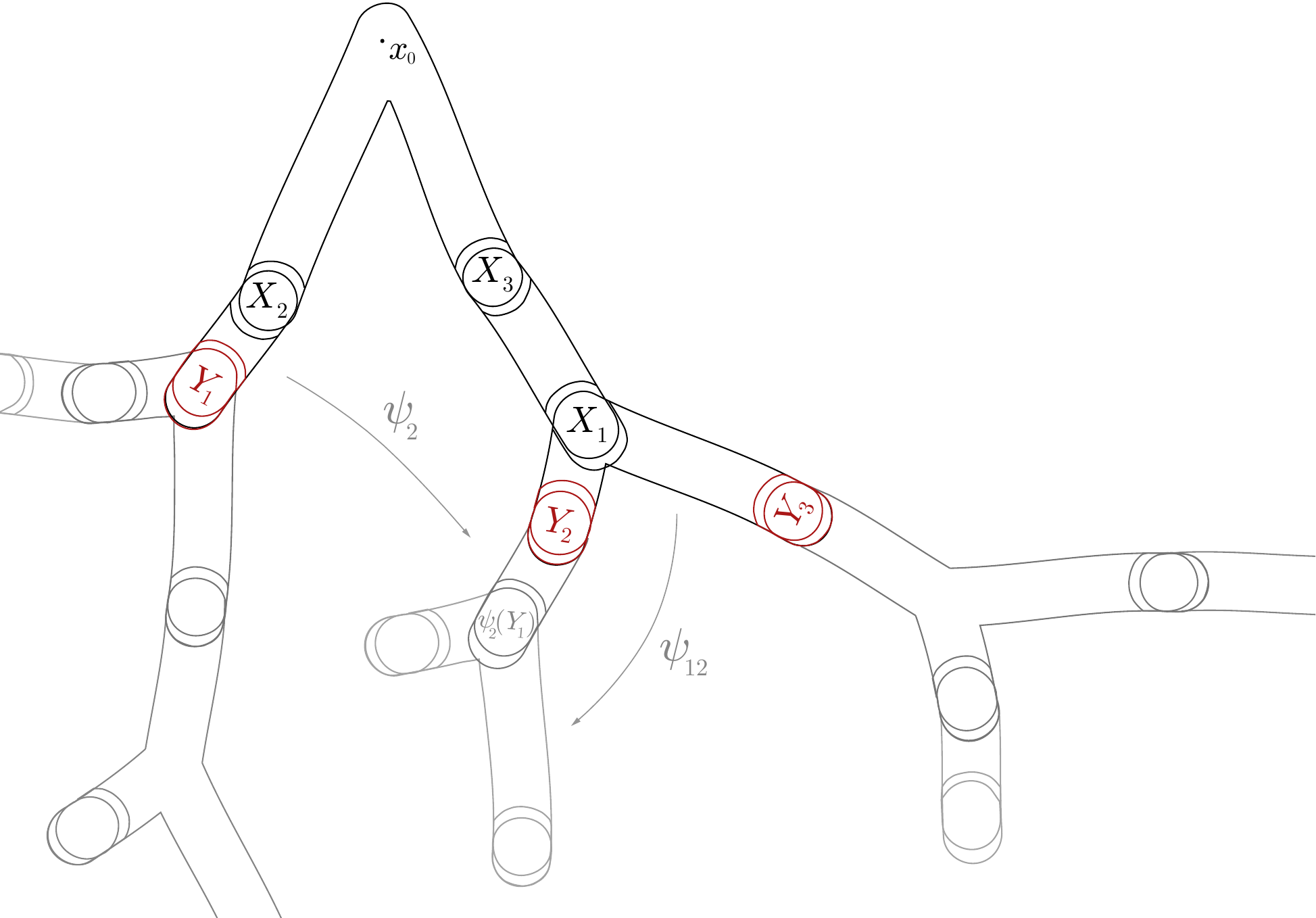}
\caption{\label{figsapmap} The sapling $S_2$}
\end{figure}

We now proceed to one of the main theorems of the section.

\bt
\label{cfthm}
For an inverse monoid given by a finite presentation $M=\invp\langle A \mid R\rangle$ and a word $w \in (A \cup A^{-1})^\ast$, if the Sch\"utzenberger graph $\S(w)$ is tree-like, then the language $L(\A(w))=\{u \in (A\cup A^{-1})^\ast: u  \geq w\}$ is context-free.
\et

\proof
If $\A(w)$ happens to be finite, then $L(\A(w))$ is regular, therefore context-free. 
Otherwise, we define a pushdown automaton $\P(w)$ that recognizes the same language as the automaton $\A(w)$.

Let $S$ be a sapling for the word $w$ with sets $Y_1, \ldots, Y_n$ --- by Proposition \ref{sapexist}, this exists. Denote the set $\{1, \ldots, n\}$ by $[n]$. In keeping with the previous notation, let $S_0=S$, and $S_i=\tilde{S}_{i-1}$ for $i \geq 1$. Denote the subgraphs of $S_i$ of the form $\x0{X}$ and  $\cc0{X}$ by $\Gamma_i(x_0,X)$ and  $\Gamma^{c}_i(x_0,X)$ respectively.

As before, for any $i \in [n]$, we have 
isomorphisms from $\Gamma_0^{c}(x_0, X_i)$ to $\Gamma_1^{c}(x_0, Y_i)$, which we denote by $\psi_i$, and $S_1=(S_0 \cup \bigsqcup_i \psi_i(\Gamma_0^{c}(x_0, X_i))/\sim$, where $\sim$ is the equivalence generated by pairs of the form $\varphi_i(t) \sim \psi_i(t)$ for any $t \in X_i^{+K} \cap \Gamma_0^c(x_0,X_i)$. Furthermore, $S_1$ is a sapling with respect to the sets $\psi_i(Y_j)$, where $Y_j \subseteq \Gamma_0^{c}(x_0, X_i)$. 

Iterating the construction to obtain $S_2=\tilde{S_1}$, for any $i \in [n]$ such that $Y_{i_2} \subseteq \Gamma_0^{c}(x_0, X_{i_1})$, we have isomorphisms from $\Gamma_1^{c}(x_0, Y_{i_2})=\psi_{i_2}(\Gamma_0^{c}(x_0, X_{i_2}))$ to $\Gamma_2^c(x_0,\psi_{i_1}(Y_{i_2}))$. Denote the induced isomorphism from $\Gamma_0^{c}(x_0, X_{i_2})$ to $\Gamma_2^c(x_0,\psi_{i_1}(Y_{i_2}))$ by $\psi_{i_2i_1}$ (see Figure \ref{figsapmap} for illustration). Note that then $S_2=(S_1 \cup \bigsqcup_{i_1,{i_2}} \psi_{i_2i_1}(\Gamma_0^{c}(x_0, X_{i_2}))/\sim$, where $\sim$ is the equivalence generated by pairs of the form  $\psi_{i_1}(\varphi_{i_2})(t) \sim \psi_{i_2i_1}(t)$ for any $t \in X_{i_2}^{+K} \cap \Gamma_0^c(x_0,X_{i_2})$. Furthermore, $S_2$ is a sapling with respect to $\psi_{i_2i_1}(Y_{i_3})$ where $Y_{i_2} \subseteq \Gamma_0^{c}(x_0, X_{i_1})$, and $Y_{i_3} \subseteq \Gamma_0^{c}(x_0, X_{i_2})$.

Suppose that, up to some positive integer $k$, we have already defined all the isomorphisms of the form $\psi_{i_k \ldots i_1}$ from $\Gamma_0^{c}(x_0, X_{i_k})$ to $\Gamma_{k}^c(x_0,\psi_{i_{k-1} \ldots i_1}(Y_{i_k}))$, where $i_j \in [n]$ and $Y_{i_j} \subseteq \Gamma_0^{c}(x_0, X_{i_{j-1}})$ for $2 \leq j \leq k$, and suppose $S_{k}$ is a sapling with sets $\psi_{i_k \ldots i_1}(Y_{i_{k+1}})$. 
Then, by the construction of $\tilde S_k= S_{k+1}$, we have isomorphisms from $\Gamma_{k}^c(x_0,\psi_{i_{k-1} \ldots i_1}(Y_{i_k}))=\psi_{i_k \ldots i_1}(\Gamma_0^{c}(x_0, X_{i_k}))$ to $\Gamma^c_{k+1}(x_0,\psi_{i_k \ldots i_1}(Y_{k+1}))$. 
Denote the isomorphism from $\Gamma_0^{c}(x_0, X_{i_{k+1}})$ to $\Gamma^c_{k+1}(x_0,\psi_{i_k \ldots i_1}(Y_{k+1}))$ by $\psi_{i_{k+1} \ldots i_1}$. 
Again, note that $S_{k+1}=(S_k \cup \bigsqcup_{i_1, \ldots, i_{k+1}} \mathop{}\psi_{i_{k+1} \ldots i_1}(\Gamma_0^{c}(x_0, X_{i_{k+1}})))/\sim$, where $\sim$ is the equivalence generated by pairs of the form  $\psi_{i_k \ldots i_1}(\varphi_{i_{k+1}})(t) \sim \psi_{i_{k+1} \ldots i_1}(t)$ for any $t \in X_{i_{k+1}}^{+K} \cap \Gamma_0^c(x_0,X_{i_{k+1}})$. Furthermore, $S_{k+1}$ is a sapling with respect to $\psi_{i_{k+1} \ldots i_1}(Y_{k+1})$, where $Y_{i_j} \subseteq \Gamma_0^{c}(x_0, X_{i_{j-1}})$ for $2 \leq j \leq k+1$.

By this iterative construction of $\S(w)$, we have 

$$\S(w)=S_0 \cup \bigcup_{\substack{k \in \mathbb N_0 \\ Y_{i_j} \subseteq \Gamma_0^{c}(x_0, X_{i_{j-1}})}}\psi_{i_k \ldots i_1}(\Gamma_0^{c}(x_0, X_{i_k})).$$

In the pushdown automaton $\P(w)$, we only keep $S_0$ and just one extra isomorphic copy of each $\Gamma_0^{c}(x_0, X_i)$, and we keep track of which isometric copy we would be in $\A(w)$ by keeping the corresponding string $i_1 \ldots i_k$ in the stack. The precise definition is as follows.

Let $\psi_i(\Gamma_0^{c}(x_0, X_i))$ be disjoint isomorphic copies of $\Gamma_0^{c}(x_0, X_i)$ by the isomorphism $\psi_i$ for $i \in [n]$, and let $f$ be a single vertex disjoint from both of these. Consider the graph $P=S_0 \cup \bigsqcup_i \psi_i(\Gamma_0^{c}(x_0, X_i))$. The set of states for $\P(w)$ is $V(P)$, the input alphabet is of course $X \cup X^{-1}$, the stack alphabet is $[n]=\{1,\ldots, n\}$, $Z$ is the initial stack symbol, the inital state is $x_0$, the terminal state is $w \in S_0$. The transitions are as follows. Suppose $u \xrightarrow{a}  v$ is a transition of $\A(w)$ with $u, v \in V(S)$. Then we define the following transitions in $\P(w)$:
\begin{itemize}
\item let $u \xrightarrow{a,\ k \to k} v$ for any $k \in [n]$,
\item furthermore if $u,v \in V(\cc0{X_i})$ let $\psi_i(u) \xrightarrow{a,\ k \to k}  \psi_i(v)$ for any $i,k \in [n]$.
\end{itemize}
In other words, we keep the transitions inherited from $\A(w)$ as they are.
In addition, we have the following transitions whenever $u \in V(X_i^{+K} \cap \cc0{X_i})$:
\begin{itemize}
\item $\varphi_i(u) \xrightarrow{\epsilon,\ k \to ik} \psi_i(u)$ for any $i,k \in [n]$,
\item $\psi_i(u) \xrightarrow{\epsilon,\ i \to \epsilon} \varphi_i(u)$ for any $i,k \in [n]$, 
\item $\psi_i(\varphi_j(u)) \xrightarrow{\epsilon,\ k \to jk} \psi_j(u)$ for any $i,j,k \in [n]$ with $Y_j \subseteq \cc0{X_i}$,
\item $\psi_j(u) \xrightarrow{\epsilon,\ j \to \epsilon} \psi_i(\varphi_j(u))$ for any $i,j,k \in [n]$ with $Y_j \subseteq \cc0{X_i}$.
\end{itemize}

We claim that $L(\P(w))=L(\A(w))$. 

For the proof, consider the following non-deterministic automaton $\A'(w)$.
The set of states is $$V(S_0\ \sqcup \ {\bigsqcup_{\substack{k \in \mathbb N_0 \\ Y_{i_j} \subseteq \Gamma_0^{c}(x_0, X_{i_{j-1}})}}}\phi_{i_k \ldots i_1}(\Gamma_0^{c}(x_0, X_{i_k}))),$$
where $\phi_{i_k \ldots i_1}(\Gamma_0^{c}(x_0, X_{i_k}))$ is an isomorphic copy of $\Gamma_0^{c}(x_0, X_{i_k})$ by $\phi_{i_k \ldots i_1}$, all disjoint from each other and $S_0$ --- essentially these correspond to the sets $\psi_{i_k \ldots i_1}(\Gamma_0^{c}(x_0, X_{i_k}))$ in $\A(w)$. The initial state is, again, $x_0$, the terminal state is $w$.
The set of transitions is the following. If $u,v \in V(S_0)$ and $u \xrightarrow{a}  v$ in $\A(w)$, then let $u \xrightarrow{a}  v$ in $\A'(w)$, and if in addition $u,v \in V(\Gamma_0^{c}(x_0, X_{i_k}))$, let $\phi_{i_{k} \ldots i_1}(u) \xrightarrow{a} \phi_{i_{k} \ldots i_1}(v)$. Furthermore, if for some $k$ and $i_1, \ldots, i_k$, $\psi_{i_k \ldots i_1}(u)=\psi_{i_{k-1} \ldots i_1}(\varphi_k(u))$, put $\phi_{i_k \ldots i_1}(u) \xrightarrow{\epsilon} \phi_{i_{k-1} \ldots i_1}(\varphi_k(u))$ and $\phi_{i_{k-1} \ldots i_1}(\varphi_k(u)) \xrightarrow{\epsilon} \phi_{i_k \ldots i_1}(u)$.
Clearly, $L(\A'(w))=L(A(w))$, as $A(w)$ is essentially obtained by $\A'(w)$ by identifying the vertices between which there are $\epsilon$-transitions.

Recall that the configurations of $\P(w)$ are pairs $(q,\alpha Z)$, where $q$ is a state of $\P(w)$, and $\alpha \in [n]^\ast$ is the string in the stack written from top down up until the bottom symbol. Consider the following one-to-one correspondence between the set of all configurations of $\P(w)$ and the set of all states of $\A'(w)$:
\begin{eqnarray*}
(u, Z) &\leftrightarrow& u, \hbox{ where } u \in S_0\\
(\psi_{i_k}(u), {i_k \ldots i_1}Z) &\leftrightarrow& \phi_{i_k \ldots i_1}(u), \hbox{ where } u \in S_0, {i_k \ldots i_1} \in [n], Y_{i_j} \subseteq \Gamma_0^{c}(x_0, X_{i_{j-1}}).
\end{eqnarray*}
It is straightforward to check that this correspondence respects transitions in the sense that reading a new letter from the input will change corresponding configurations to corresponding configurations. It is also true that the initial vertices and respectively, the terminal vertices correspond in the two automata. Therefore $L(\P(w))=L(\A'(w))=L(\A(w))$, which proves the statement.
\qed

\bt
\label{esaptreelike}
If $\invp\langle A \mid R\rangle$ is a finite presentation and $w \in (A \cup A^{-1})^\ast$ is such that there exists a sapling for $S(w)$, then $S(w)$ is tree-like.
\et

\proof
By Proposition \ref{prop_tree} it will suffice to prove that $S(w)$ has a strong tree decomposition of finite width. Suppose $S$ satisfies the definition of sapling with the subgraphs $X_1, \ldots, X_n, Y_1, \ldots, Y_n$. We follow the notation introduced in the previous proof.  

For $k \geq 2$ and $j_1, \ldots, j_k \in [n]$ such that $Y_{j_l} \subseteq \Gamma_0^c(x_0, X_{j_{l-1}})$,
let $Y_{j_k\ldots j_1}=\psi_{j_{k-1}\ldots j_1}(Y_{j_k})$.

Observe that 
$$V(S(w))=V\Big(\bigcap_{j_1}(\x0{Y_{j_1}})\cup \bigcup\limits_{j_1}Y_{j_1}\Big)\ \sqcup\ {\bigsqcup\limits_{j_1}}V(\cc0{Y_{j_1}}),$$
furthermore, for any $j_1 \in [n]$,
\begin{eqnarray*}
V(\cc0{Y_{j_1}})&=&V(\cc0{Y_{j_1}})\cap \bigg(V\Big(\bigcap_{j_2}(\x0{Y_{j_2j_1}})\cup \bigcup\limits_{j_2}Y_{j_2j_1}\Big)\ \sqcup\ {\bigsqcup\limits_{j_2}}V(\cc0{Y_{j_2j_1}})\bigg)\\
&=&V\Big(\bigcap_{j_2}(\cc0{Y_{j_1}}\cap \x0{Y_{j_2j_1}})\cup \bigcup_{j_2} Y_{j_2j_1}\Big)\ \sqcup\ {\bigsqcup_{j_2}} V(\cc0{Y_{j_2j_1}})
\end{eqnarray*}}
(In order to keep the formulae from becoming overly convoluted, we only specify the variable over which we are taking the intersections and unions instead of writing down the precise indexing set. The variable is assumed to take any value for which the formula makes sense.) 

Let 
$$P_0=V\Big(\bigcap_{j_1}(\x0{Y_{j_1}})\cup \bigcup\limits_{j_1}Y_{j_1}\Big),$$ 
and for $k \in \mathbb N$ and $j_1, \ldots j_{k+1} \in [n]$ such that $Y_{j_l} \subseteq \Gamma_0^c(x_0, X_{j_{l-1}})$,
let
$$P_{j_{k} \ldots j_1}=V\Big(\bigcap_{j_{k+1}}(\cc0{Y_{j_{k}\ldots j_1}}\cap \x0{Y_{j_{k+1} \ldots j_1}})\cup \bigcup_{j_{k+1}} Y_{j_{k+1} \ldots j_1}\Big),$$
and let $P^i_{j_{k} \ldots j_1}$: $i=1, \ldots, c_{j_{k} \ldots j_1}$ denote the vertex sets of all the connected components of the graphs $\langle P_{j_{k} \ldots j_1} \rangle$.
From the equations above, by a formal inductive argument, one obtains that
$$V(S(w))=P_0\ \sqcup\ {\bigsqcup_{\substack{k \in \mathbb N\\ j_1, \ldots, j_{k}}}} 
P_{j_{k} \ldots j_1}=P_0\ \sqcup\ {\bigsqcup_{\substack{k\in \mathbb N\\ j_1, \ldots, j_{k}, i}}} 
P^i_{j_{k} \ldots j_1}.$$

The desired partition $\mathcal P$ consists of the set $P_0$ and the ones of the form $P^i_{j_{k} \ldots j_1}$. To see that it is of uniformly bounded diameter, 
note that for all $j_k \in [n]$,
$$\Gamma_0^c(x_0,X_{j_k}) \supseteq \cc0{X_{j_k}} \cap \bigcap_{Y_{j_{k+1}}\subseteq \cc0{X_{j_k}}}\x0{Y_{j_{k+1}}},$$
and hence 
\begin{eqnarray*}
\psi_{j_k\ldots j_1}(\Gamma_0^c(x_0,X_{j_k})) &\supseteq & \psi_{j_k\ldots j_1}\big(\cc0{X_{j_k}} \cap \bigcap_{Y_{j_{k+1}}\subseteq \cc0{X_{j_k}}}\x0{Y_{j_{k+1}}}\big)\\
&=& \cc0{Y_{j_k\ldots j_1}} \cap \bigcap_{j_{k+1}} \x0{Y_{j_{k+1}\ldots j_1}}.
\end{eqnarray*}
Therefore for any set $P^i_{j_{k} \ldots j_1}$, we have 
$$\operatorname{diam}(P^i_{j_{k} \ldots j_1}) \leq \operatorname{diam}(\Gamma_0^c(x_0,X_{j_k}))+\sum_{Y_{j_{k+1}}\subseteq \cc0{X_{j_k}}} (\operatorname{diam}(Y_{j_{k+1}})+1).$$
Since the above bound only depends on $j_k$, 
taking the maximum of the above function for all $j_k \in [n]$ and of $\operatorname{diam}(P_0)$, we obtain a uniform bound on the diameters.

It remains to show that $S(w)/\mathcal P$ is a tree. Naturally it is connected because $S(w)$ was. 
Let $e \in E(S(w))$ such that $\alpha(e) \in P^i_{j_{k} \ldots j_1}$.

Case 1: $\alpha(e) \in Y_{j_{k+1}\ldots j_1}$ for some $j_{k+1}$. 
Then either
\begin{enumerate}
	\item $\omega(e) \in V(Y_{j_{k+1}\ldots j_1}) \subseteq P^i_{j_{k} \ldots j_1}$, or
	\item $\omega(e) \in \cc0{Y_{j_{k+1}\ldots j_1}}$, then since for any $j_{k+2}$, $d(Y_{j_{k+1}\ldots j_1}, Y_{j_{k+2}\ldots j_1})=d(X_{j_{k+1}}, Y_{j_{k+2}}) \geq K$, we have
	$\omega (e) \in V(\cc0{Y_{j_{k+1}\ldots j_1}} \cap \bigcap_{j_{k+2}} \x0{Y_{j_{k+2}\ldots j_1}}) \subseteq P^j_{j_{k+1} \ldots j_1}$ for some $j$, or
	\item $\omega(e) \in \x0{Y_{j_{k+1}\ldots j_1}}$. We claim that then $\omega(e) \in V(\cc0{Y_{j_{k}\ldots j_1}} \cap \bigcap_{j} \x0{Y_{jj_{k}\ldots j_1}}) \subseteq P^i_{j_{k} \ldots j_1}$. Again, since $d(Y_{j_{k}\ldots j_1}, Y_{j_{k+1}\ldots j_1})=d(X_{j_{k}}, Y_{j_{k+1}}) \geq K$, we have $\omega (e) \in \cc0{Y_{j_{k}\ldots j_1}}$. Furthermore since for any $j \neq j_{k+1}$, $d(Y_{jj_{k}\ldots j_1}, Y_{j_{k+1}\ldots j_1}) =d(Y_j, Y_{j_{k+1}})\geq 2$, we have $\omega(e) \notin Y_{jj_{k}\ldots j_1}$. 
Note that $(\cc0{ Y_{jj_{k}\ldots j_1}} \cup  Y_{jj_{k}\ldots j_1})  \cap Y_{j_{k+1}\ldots j_1}=\emptyset$, and this implies $Y_{j_{k+1}\ldots j_1} \subseteq \x0{Y_{jj_{k}\ldots j_1}}$, in particular, $\alpha(e) \in \x0{Y_{jj_{k}\ldots j_1}}$. Let $p$ be a path in $S(w)$ from $x_0$ to $\alpha(e)$ avoiding $Y_{jj_{k}\ldots j_1}$, then $pe$ is is a path in $S(w)$ from $x_0$ to $\omega(e)$ avoiding $Y_{jj_{k}\ldots j_1}$, therefore 	$\omega(e) \in \x0{Y_{jj_{k}\ldots j_1}}$ as well. Hence $\omega(e) \in V(\cc0{Y_{j_{k}\ldots j_1}} \cap \bigcap_{j} \x0{Y_{jj_{k}\ldots j_1}}) \subseteq P^i_{j_{k} \ldots j_1}$ indeed.
\end{enumerate}

Case 2:
$\alpha(e) \in \bigcap_{j_{k+1}}(\cc0{Y_{j_{k}\ldots j_1}}\cap \x0{Y_{j_{k+1} \ldots j_1}}$.
Then either
\begin{enumerate}
	\item $\omega(e) \in V(\bigcap_{j_{k+1}}(\cc0{Y_{j_{k}\ldots j_1}}\cap \x0{Y_{j_{k+1} \ldots j_1}}) \subseteq P^i_{j_{k} \ldots j_1}$, or
	\item there exists some $j$ such that $\omega(e) \in V(Y_{jj_{k}\ldots j_1}) \subseteq P^i_{j_{k} \ldots j_1}$, or
	\item $\omega(e) \in V(Y_{j_{k}\ldots j_1})$. If $k=1$, then $\omega(e) \in P_0$. Otherwise note that as $Y_{j_{k}\ldots j_1}$ is connected, it is in a single component of $\langle P_{j_{k-1} \ldots j_1} \rangle$. Denote the vertex set of this component by $P^{i_k}_{j_{k-1} \ldots j_1}$. Then $\omega(e) \in P^{i_k}_{j_{k-1} \ldots j_1}$.
\end{enumerate}

Let $p_0:= P_0/\mathcal P$, $p^i_{j_{k} \ldots j_1}:=P^i_{j_{k} \ldots j_1}/\mathcal P$. 
Then the above argument shows us that the vertex $p^i_{j_{k} \ldots j_1}$ is adjacent to vertices of the form $p^j_{j_{k+1} \ldots j_1}$, and to either $p_0$ (in the $k=1$ case) or to $p^{i_k}_{j_{k-1} \ldots j_1}$ (in the $k\geq 2$) case. That is, $S(w)/\mathcal P$ is a tree with root $p_0$, the $(k+1)$-th level consisting of vertices of the form $p^i_{j_{k} \ldots j_1}$, and  if $k\geq 2$, then the parent of such a vertex is $p^{i_k}_{j_{k-1} \ldots j_1}$.
\qed

\bt
\label{thm:solvwp}
For the set of finitely presented, tree-like inverse monoids, the word problem is uniformly solvable.
\et

\proof
We provide an algorithm,  which takes a finite inverse monoid presentation and a word $w$ in the generators as input, halts if and only if the presentation defines a tree-like inverse monoid, and outputs $\S(w)$ when $\S(w)$ is finite, and a sapling for $\S(w)$ when it is infinite. By Theorem \ref{Ste}, this is sufficient to solve the word problem, as knowing $\S(w)$ would certainly allow us to decide $L(\A(w))$, and from a sapling for $\S(w)$, one can construct a pushdown automaton recognizing $L(\A(w))$ following the construction described in proof of Theorem \ref{cfthm}.

Let $S$ be any connected subgraph of $\exp_m(\MT(w))$ for some $m \in \mathbb N$, and suppose the image of $\MT(w)$ is contained in $S$. If there exist induced subgraphs $X_1, \ldots X_n, Y_1, \ldots, Y_n$ of $S$ which satisfy conditions (1)--(4) of Definition \ref{saplingdef}, we call $S$ a \emph{sapling candidate} with the subgraphs $X_1, \ldots ,X_n$. Note that for any connected subgraph $S$ of $\exp_m(\MT(w))$, it is decidable whether $S$ is a sapling candidate. A sapling candidate is a sapling if and only if there exists $k\in \mathbb N$ such that $\cc0{X_i} \subseteq  \exp_{k}(X_i^{+K})$. 

The algorithm is as follows. We declare a list $L$, which will contain elements of the form $(S, \exp_{l}(X_1^{+K}), \ldots, \exp_{l}(X_n^{+K}))$, where $S$ is a sapling candidate with subgraphs $X_1, \ldots ,X_n$. The list $L$ is initialized empty. We start approximating $\S(w)$ from $\MT(w)$ by iterating full $P$-expansions and determinizations. For each graph $\exp_m(\MT(w))$ obtained after $m \in \mathbb N$ iterations, perform the following steps.
\begin{enumerate}
	\item Check whether $\exp_m(\MT(w))$ is $P$-complete. If yes, output $\S(w)=\exp_m(\MT(w))$. Otherwise go to (2).
	\item Search the subgraphs of $\exp_m(\MT(w))$ for new sapling candidates. If $S$ is a newly found sapling candidate with subgraphs $X_1, \ldots ,X_n$, append $(S, \exp_0(X_1^{+K}), \ldots, \exp_0(X_n^{+K}))$ to $L$.
	\item For each $(S, \exp_{l}(X_1^{+K}), \ldots, \exp_{l}(X_n^{+K})) \in L$, check if $$\cc0{X_i} \subseteq  \exp_{l}(X_i^{+K})$$ holds for each $i=1,\ldots, n$. 
	If yes, output the sapling $S$. Otherwise replace \linebreak $(S, \exp_{l}(X_1^{+K}), \ldots, \exp_{l}(X_n^{+K}))$ by $(S, \exp_{l+1}(X_1^{+K}), \ldots, \exp_{l+1}(X_n^{+K}))$.
\end{enumerate}

Clearly, if the algorithm halts, it does indeed output either $\S(w)$ or a sapling for $\S(w)$, and in either case, by Theorem \ref{esaptreelike}, $\S(w)$ is tree-like. What remains to be shown is that if $\S(w)$ is tree-like, then the algorithm halts.

If $\S(w)$ is finite, then $\S(w) =\exp_m(MT(w))$ for some $m \in N$, and we get a `yes' after $m$ iterations in case $1$. If $\S(w)$ is infinite tree-like, then there exists a sapling for $S$, and as $S$ is a finite connected subgraph of $\S(w)$, there exists some $m \in M$ such that $S$ is a subgraph of $\exp_m(MT(w))$. Suppose $S$ is a sapling with respect to subgraphs $X_1, \ldots X_n, Y_1, \ldots, Y_n$, in particular suppose $\cc0{X_i} \subseteq  \exp_{k}(X_i^{+K})$ for $i=1,\ldots, n$. Then after at most $m$ iterations, we find that $S$ is a sapling candidate, and append $(S, \exp_0(X_1^{+K}), \ldots, \exp_0(X_n^{+K}))$ to $L$, and after at most $k+1$ more iterations, we have $(S, \exp_{k}(X_1^{+K}), \ldots, \exp_{k}(X_n^{+K}))$ in the list for which we get a `yes' at step 3. This completes the proof.
\qed

\section{
Hyperbolic Sch\"{u}tzenberger graphs  
}
\label{Sec.hyperbolic}

Recall that a graph $\Gamma$ is \emph{$\delta$-hyperbolic} if all the geodesic triangles of $\Gamma$ are $\delta$-thin. 
In this section, we consider finitely presented inverse monoids with hyperbolic Sch\"utzenberger graphs. The main result of this section is that these may have undecidable word problem, as stated in the following result.
 
\begin{T}\label{thm:Rips:correct}
There is a finitely presented inverse monoid $M$
and a positive real number $\delta > 0$ such that   
\begin{enumerate}
\item[(i)] every Sch\"{u}tzenberger graph of $M$ is  $\delta$-hyperbolic, but 
\item[(ii)] the monoid $M$ has undecidable word problem.   
\end{enumerate}
Moreover, such examples exist where the above properties hold and in addition \emph{(iii)} all of the relations are of the form $w=1$ with $w \in A^{*} $, and \emph{(iv)} the monoid $M$ is $E$-unitary.
\end{T}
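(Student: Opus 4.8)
The plan is to realise an undecidable subgroup membership problem inside the word problem of a special inverse monoid while keeping the ambient geometry hyperbolic, and the natural tool for producing hyperbolic groups with bad algorithmic behaviour is the \emph{Rips construction}. Starting from a finitely presented group $Q = \Gpres{X}{R}$ with undecidable word problem, Rips' construction produces a $C'(1/6)$ small cancellation group $G = \Gpres{A}{\mathcal R}$ together with a short exact sequence $1 \to N \to G \to Q \to 1$ in which $N = \langle a, b\rangle$ is finitely generated and normal. Being $C'(1/6)$, the Cayley graph of $G$ is $\delta_0$-hyperbolic for a single $\delta_0$, and membership in the subgroup $N$ is undecidable, since $g \in N$ if and only if the image of $g$ in $Q$ is trivial. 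The inverse monoid $M$ will be a special presentation $\Ipres{A}{w_1 = 1, \ldots, w_k = 1}$ whose greatest group homomorphic image is exactly $G$.

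First I would set up the undecidability half, part (ii). By Theorem \ref{Ste} the word problem of $M$ is governed by the languages $L(\A(u))$, and for a special presentation the vertex set of the Schützenberger graph $\S(1)$ of the identity is known to be identified, under $\overline{\sigma}$, with the submonoid $P$ of $G$ generated by the prefixes of the defining words $w_i^{\pm 1}$. Deciding membership of $u$ in $L(\A(1))$ is then equivalent to deciding, for $u$ with trivial image in $G$, whether every prefix of $u$ represents an element of $P$, so that the prefix membership problem of $G$ is interreducible with the word problem of $M$ (this interreducibility for special inverse monoids being known). The crux of this half is therefore to design the Rips relators $w_i$ (which we are free to choose subject only to the small cancellation condition) so that the prefix submonoid $P$ generates $N$ and membership in $P$ encodes membership in $N$, and hence the word problem of $Q$; this renders the word problem of $M$ undecidable.

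Next I would establish the uniform hyperbolicity, part (i). Condition (b), that $M$ is $E$-unitary, is the bridge: by Lemma \ref{lem:full:sibgraph} every Schützenberger graph $\S(w)$ then embeds via $\overline{\sigma}$ into the Cayley graph of $G$ as an \emph{induced} subgraph. Since the Cayley graph of $G$ is $\delta_0$-hyperbolic, it remains to transfer this hyperbolicity to the intrinsic path metric of $\S(w)$, and this is the main obstacle, because an induced subgraph of a hyperbolic graph need not be hyperbolic in its own metric. The key technical lemma I would aim to prove is that the Schützenberger graphs are \emph{uniformly quasiconvex} in the Cayley graph of $G$, with quasiconvexity constant depending only on the presentation and not on $w$: a quasiconvex subspace of a $\delta_0$-hyperbolic space is itself hyperbolic with a constant $\delta$ depending only on $\delta_0$ and the quasiconvexity constant, yielding a single $\delta$ valid for every Schützenberger graph at once. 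To prove uniform quasiconvexity I would exploit the small cancellation geometry of $G$ together with the description of $\S(w)$ as the closure of (the image of) the Munn tree $\MT(w)$ under the relator-loop filling induced by the relations $w_i = 1$: because distinct relators overlap only in short pieces, geodesics of $G$ between vertices of $\S(w)$ cannot stray far from the union of short relator loops making up $\S(w)$.

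Finally, condition (a), that all relations have the form $w = 1$ with $w \in A^{*}$ positive, can be arranged without changing $G$ by the standard device of adjoining, for each generator $a$, a formal inverse letter $\bar a$ together with the positive relations $a \bar a = 1$ and $\bar a a = 1$, and rewriting each occurrence of $a^{-1}$ in the Rips relators as $\bar a$; one checks this preserves both the isomorphism type of $G$ and the small cancellation hypotheses needed above. The $E$-unitarity of (b) would be verified directly from the structure of the positive relators using a sufficient criterion for special inverse monoids, this being exactly the hypothesis under which Lemma \ref{lem:full:sibgraph} applies and the geometric transfer of hyperbolicity goes through. I expect the hardest step overall to be the uniform quasiconvexity of the Schützenberger graphs, since this is precisely where the special combinatorics of inverse monoid Schützenberger graphs must be reconciled with the coarse hyperbolic geometry supplied by the Rips construction; note also that these graphs must fail to be tree-like, consistent with Theorem \ref{thm:solvwp}.
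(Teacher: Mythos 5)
Your skeleton (Rips construction, $E$-unitarity, Lemma \ref{lem:full:sibgraph}, transfer of hyperbolicity from an ambient hyperbolic Cayley graph) matches the paper's, but the two steps you yourself flag as the crux are genuine gaps, and at both points the paper does something materially different.

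First, part (ii) is assumed rather than proved. You posit a special presentation $\Ipres{A}{w_1=1,\ldots,w_k=1}$ over the \emph{same} alphabet $A$, with maximal group image the Rips group itself, and propose to ``design the Rips relators'' so that the prefix monoid $P$ encodes membership in the normal subgroup $N$. No mechanism for this is given, and it is not a routine verification: producing a hyperbolic group presentation whose prefix monoid has undecidable membership, while retaining $E$-unitarity, is precisely the research-level content of \cite{Gray19}, and the construction there does not stay over the original alphabet. The paper instead invokes Theorem \ref{thm_ConstructionGeneral} from \cite{Gray19}: it adjoins a fresh generator $t$ and uses the idempotent word
$f=e(a_1,\ldots,a_n,tw_1t^{-1},\ldots,tw_kt^{-1},tw_1^{-1}t^{-1},\ldots,tw_k^{-1}t^{-1},a_1^{-1},\ldots,a_n^{-1})$,
setting $M=\Ipres{A,t}{fr_1=1,\, r_2=1,\ldots,r_m=1}$. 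The maximal group image is then $H\ast \FG(t)$, not $H$, and decidability of the word problem of $M$ would decide membership of the subgroup generated by $w_1,\ldots,w_k$ in $H$, which is undecidable by Corollary \ref{cor:Rips82}. Without this device (or an equivalent one) your undecidability half has no proof.

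Second, the hyperbolicity transfer. Your key lemma --- uniform quasiconvexity of all Sch\"utzenberger graphs in the Cayley graph of $G$, to be extracted from $C'(1/6)$ geometry and Stephen's procedure --- is again only asserted, and for an arbitrary special presentation over a small cancellation group nothing of the sort is known; you give no way to control geodesics of $G$ relative to the limit graphs of Stephen's construction. The paper avoids quasiconvexity altogether: the relation $fr_1=1$ forces every $a\in A$ to be \emph{invertible} in $M$, so each Sch\"utzenberger graph, embedded by Lemma \ref{lem:full:sibgraph} as an induced subgraph of the Cayley graph of $H\ast\FG(t)$, is $H$-closed, i.e.\ a union of entire cosets of $H$ joined by single $t$-edges in a tree pattern (Theorem \ref{thm:two:triangle}). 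Theorem \ref{thm:tree:of:hyperbolic} (``a tree of $\delta$-hyperbolic graphs is $\delta$-hyperbolic'') then gives every Sch\"utzenberger graph the \emph{same} $\delta$ as $H$, so uniformity is automatic. (Your transfer principle --- a quasiconvex connected subgraph of a hyperbolic graph is hyperbolic with controlled constant --- is fine; what is missing is any proof of quasiconvexity and a concrete construction to apply it to.) A smaller error in the same spirit: your recipe for (a) adds $a\bar a=1$ \emph{and} $\bar a a=1$ for every generator, which is legitimate only for generators already known to be invertible; in the paper this invertibility is a consequence of $fr_1=1$ for the $a_i$, while $t$ is only right invertible, so only $tt'=1$ is added for $t$.
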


We now summarize an important result from \cite{Rips82}. The Rips construction concerns groups satisfying certain small cancellation conditions, see \cite[Chapter 5, page 240]{LS}. We shall not need the definition of the small cancellation condition $C'(\lambda)$ here but just the fact that  any finitely presented group  satisfying a metric small cancellation condition $C'(\lambda)$ for some   $0 < \lambda  \leq 1/6$, is a hyperbolic group.

\begin{T}[\cite{Rips82}]
Let $\lambda > 0$ and let $G$ be any finitely presented group. Then there is a short exact sequence of groups 
\[
1 \rightarrow K \rightarrow H \xrightarrow{\phi} G\rightarrow 1
\]
such that 
\begin{enumerate} 
\item $H$ is a finitely presented group which has a presentation satisfying the small cancellation condition $C'(\lambda)$, and
\item $K$ is finitely generated (as a group). 
\end{enumerate}
\end{T}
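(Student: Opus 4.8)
The plan is to prove the Rips theorem as stated, i.e. to construct, for a given finitely presented group $G = \Gpres{a_1,\ldots,a_k}{R_1,\ldots,R_l}$ and a given $\lambda > 0$, a finitely presented group $H$ with a $C'(\lambda)$ presentation, a finitely generated normal subgroup $K \trianglelefteq H$, and a surjection $\phi \colon H \to G$ with kernel $K$. The idea is that $H$ is built from $G$ by adjoining two new generators $x_1, x_2$ which will generate $K$, and then imposing relations that (a) make $K = \langle x_1, x_2 \rangle$ normal, and (b) express the old relators $R_j$ as words in $x_1, x_2$. The key to the small cancellation property is that we have complete freedom in choosing how the old relators and the conjugation relations are realized inside $K$: we pad them with long, generic, highly non-overlapping blocks so that no two relators share a long common subword.

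Concretely, I would take $H = \Gpres{a_1,\ldots,a_k, x_1, x_2}{\,a_i^{\varepsilon} x_m a_i^{-\varepsilon} = W_{i,m,\varepsilon},\ R_j = V_j\,}$, where $\varepsilon = \pm 1$, $1 \le i \le k$, $m \in \{1,2\}$, $1 \le j \le l$, and each $W_{i,m,\varepsilon}$ and $V_j$ is some positive word in $x_1, x_2$ to be chosen. The conjugation relations immediately force $\langle x_1, x_2\rangle$ to be normal in $H$: conjugating either $x_m$ by any generator $a_i^{\pm 1}$ returns a word in the $x$'s, so $K := \langle x_1, x_2\rangle$ is a normal subgroup, manifestly finitely generated, which is condition (2). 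The map $\phi \colon H \to G$ sending $a_i \mapsto a_i$ and $x_m \mapsto 1$ is then a well-defined surjective homomorphism, because every defining relator of $H$ maps to a relator of $G$ (the conjugation relators map to $a_i a_i^{-1} = 1$, and $R_j = V_j$ maps to $R_j = 1$, which holds in $G$). One checks $\ker\phi = K$ by a normal-form / rewriting argument: using the conjugation relations one pushes all $x$-letters in any word representing an element of $\ker\phi$ to (say) the right, leaving an $a$-word that must be trivial in $G$; this shows the kernel is exactly $\langle\langle K\rangle\rangle = K$, giving the short exact sequence.

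The substantive step, and the main obstacle, is choosing the words $W_{i,m,\varepsilon}$ and $V_j$ so that the resulting presentation satisfies $C'(\lambda)$. The standard device is to fix a large integer $N$ (depending on $\lambda$, on the number and lengths of the defining relators) and set each chosen $x$-word to be a concatenation of distinct, long blocks of the form $x_1 x_2^{t} x_1 x_2^{t+1} \cdots x_1 x_2^{t+N}$, with disjoint ranges of exponents $t$ used across the different relators. Because every block is a long product of $x_1$-separated powers of $x_2$ with strictly increasing exponents, any common subword of two distinct relators (or of a relator with a cyclic permutation of itself) is confined to a single block boundary and is therefore short relative to the total relator length. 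I expect the heart of the argument to be a careful but routine piece-length estimate: one must verify that every piece (maximal common subword of two distinct defining relators, up to inversion and cyclic rotation) has length less than $\lambda$ times the length of each relator containing it. Getting the bookkeeping right — ensuring the $a$-letters in the conjugation relators cannot create long pieces either, and that inverses $x_m^{-1}$ introduced by $\varepsilon = -1$ or by cyclic inverses do not accidentally match — is the delicate part. Once the piece bound is established, $H$ is $C'(\lambda)$ by construction, which is condition (1), and since we may take $0 < \lambda \le 1/6$, $H$ is in particular hyperbolic; this completes the proof.
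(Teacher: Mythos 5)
The paper offers no proof of this theorem at all --- it is imported verbatim from Rips' 1982 note \cite{Rips82} --- and your proposal is exactly Rips' original construction: adjoin two generators $x_1,x_2$, impose conjugation relations $a_i^{\pm1}x_m a_i^{\mp1} = W_{i,m,\pm}$ so that $K=\langle x_1,x_2\rangle$ is normal and finitely generated, rewrite each relator $R_j$ as an $x$-word $V_j$, and pad all right-hand sides with long blocks $x_1x_2^{t}x_1x_2^{t+1}\cdots x_1x_2^{t+N}$ over pairwise disjoint exponent ranges so that the strictly increasing $x_2$-runs pin down any common subword and force $C'(\lambda)$ once $N$ is large relative to $1/\lambda$ and to the lengths of the $R_j$. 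Your outline is correct, with the piece-length bookkeeping left at essentially the same level of detail as in Rips' own two-page argument, so there is nothing to flag.
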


\begin{Corol}[Corollary (b), \cite{Rips82}]\label{cor:Rips82}
There is a finitely presented hyperbolic group $H$ with a fixed finitely generated subgroup $K$ such that the subgroup membership problem for $K$ within $H$ is undecidable. 
\end{Corol}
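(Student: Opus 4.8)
The plan is to apply the Rips construction (the preceding theorem) to a group $G$ with unsolvable word problem, and then to observe that membership in the kernel $K = \ker\phi$ precisely encodes the word problem of the quotient $G$.

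First, I would fix a finitely presented group $G$ with undecidable word problem; such groups exist by the Novikov--Boone theorem. Applying the Rips construction with parameter $\lambda = 1/6$ yields a short exact sequence
$$1 \rightarrow K \rightarrow H \xrightarrow{\phi} G \rightarrow 1,$$
in which $H$ is finitely presented by a presentation satisfying the small cancellation condition $C'(1/6)$, and $K$ is finitely generated. Since $C'(\lambda)$ with $0 < \lambda \le 1/6$ implies hyperbolicity (as recalled just before the Rips theorem), $H$ is a finitely presented hyperbolic group, and $K \le H$ is the required fixed finitely generated subgroup.

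Next I would build an effective reduction from the word problem of $G$ to the membership problem of $K$ in $H$. Because $K = \ker\phi$, for any $h \in H$ we have $h \in K$ if and only if $\phi(h) = 1$ in $G$. As $\phi$ is surjective and is explicitly determined by the images of the generators of $H$ supplied by the construction, for each generator $b$ of $G$ I can compute a word $\tilde b$ over the generators of $H$ with $\phi(\tilde b) = b$ in $G$. Extending this letterwise gives a computable map $w \mapsto \tilde w$ from words over the generators of $G$ to words over the generators of $H$ satisfying $\phi(\tilde w) = w$ in $G$; hence $\tilde w \in K$ if and only if $w = 1$ in $G$.

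Finally I would conclude by contradiction: a decision procedure for membership in $K$, precomposed with the computable map $w \mapsto \tilde w$, would decide whether $w = 1$ in $G$, contradicting the choice of $G$. Thus the subgroup membership problem for $K$ in $H$ is undecidable. The only point needing care --- and the sole (minor) obstacle --- is the effectivity of the lift: one must ensure that a preimage word $\tilde b$ for each generator of $G$ can genuinely be read off from the Rips construction, so that $w \mapsto \tilde w$ is computable. This is immediate, since the construction specifies $\phi$ on generators (indeed in the usual form of the construction the generators of $G$ themselves occur among those of $H$, with $\phi$ acting as the identity on them).
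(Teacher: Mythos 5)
Your proposal is correct and is precisely the standard derivation of Corollary (b) from the Rips construction --- the paper itself gives no independent proof but simply cites \cite{Rips82}, where the corollary is obtained in exactly this way: apply the construction to a finitely presented group $G$ with undecidable word problem (Novikov--Boone) and observe that, since $K=\ker\phi$, a word $w$ in the generators of $G$ (which survive as generators of $H$ with $\phi$ the identity on them) satisfies $w=1$ in $G$ if and only if $w\in K$ in $H$. Your worry about effectivity of the lift is in any case moot, since only finitely many preimage words $\tilde b$ are needed and these may simply be hard-coded into the reduction.
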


\subsection{Tree of $\delta$-hyperbolic graphs}
In this section we will prove a key result needed to prove Theorem~\ref{thm:Rips:correct} that may be informally described as saying that ``a tree of $\delta$-hyperbolic graphs is $\delta$-hyperbolic''; see Theorem~\ref{thm:tree:of:hyperbolic}. This theorem is possibly already known, but since we could not find a proof in the literature, we provide a proof here for the sake of completeness.  


As already observed above, Sch\"{u}tzenberger graphs of inverse monoids have edges in inverse pairs.

%

\begin{T}\label{thm:tree:of:hyperbolic}
  Let $\Gamma$ be a symmetric graph. Suppose that there is a partition $\P$ of $V(\Gamma)$ with parts $P_i$ $(i \in I)$, so that $V(\Gamma) = \bigcup_{i \in I} P_i$ , such that for all $i, j \in I$ with $i \neq j$ there is at most one edge $e \in E(\Gamma)$ with $\alpha(e) \in P_i$ and $\omega(e) \in P_j$. Let $\Gamma_i = \langle P_i \rangle$ be the subgraph of $\Gamma$ induced by $P_i$ for all $i \in I$.  If $\Gamma_i$ is $\delta$-hyperbolic for all $i \in I$, and the quotient graph $\Gamma / \P$ is a tree, then the graph $\Gamma$ is $\delta$-hyperbolic. 
\end{T}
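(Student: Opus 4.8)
The plan is to show directly that every geodesic triangle in $\Gamma$ is $\delta$-thin, keeping the constant $\delta$ inherited from the parts untouched. Everything hinges on the observation that, because there is at most one edge between any two distinct parts and $\Gamma/\P$ is a tree, each such connecting edge is a \emph{bridge}: deleting it disconnects $\Gamma$, and the two resulting sides correspond to the two subtrees of $\Gamma/\P$. The first step I would carry out is a \textbf{convexity lemma}: for every $i$ the induced subgraph $\Gamma_i$ is isometrically embedded in $\Gamma$ and is convex, i.e. any geodesic of $\Gamma$ between two vertices of $P_i$ lies entirely inside $P_i$. This follows because a path leaving $P_i$ must cross a bridge and, in order to return, cross the \emph{same} bridge again, creating an excursion that can be excised to shorten the path, contradicting geodesicity. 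In particular $d_\Gamma$ and $d_{\Gamma_i}$ agree on $P_i$.

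Next I would establish the \textbf{decomposition of geodesics}. Fix vertices $u \in P_i$, $v \in P_j$ and let $P_i = Q_0, Q_1, \ldots, Q_s = P_j$ be the unique path from $P_i$ to $P_j$ in the tree $\Gamma/\P$. Since each $Q_\ell$--$Q_{\ell+1}$ bridge is a cut edge, any geodesic from $u$ to $v$ crosses each of these bridges exactly once and no others; consequently it splits as an alternating concatenation of single bridge edges and ``gate-to-gate'' subsegments, where within each $Q_\ell$ the geodesic runs from a uniquely determined entry gate to a uniquely determined exit gate. By the convexity lemma each such subsegment is a geodesic of $\Gamma_\ell$. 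I would also record the elementary consequence of $\delta$-thinness used repeatedly below: two geodesics of a $\delta$-hyperbolic graph with common endpoints are within Hausdorff distance $\delta$ of each other, obtained by applying $\delta$-thinness to the degenerate triangle having one trivial side.

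Now take a geodesic triangle with vertices $x \in P_a$, $y \in P_b$, $z \in P_c$ and sides $[x,y],[y,z],[z,x]$, and let $P_m$ be the \emph{median} part of $P_a,P_b,P_c$ in the tree, i.e. the common meeting point of the three tree-geodesics. The crucial structural point is that these tree-geodesics form a tripod centred at $P_m$, and the uniqueness of the connecting bridges forces the entry and exit gates at $P_m$ to be the \emph{same} for the two sides passing through a given arm. Writing $p,q,r \in P_m$ for the gates of $P_m$ towards the $P_a$-, $P_b$-, $P_c$-directions, the sides decompose as $[x,y]=(x\leadsto p)(p\leadsto q)(q\leadsto y)$, $[z,x]=(z\leadsto r)(r\leadsto p)(p\leadsto x)$, and $[y,z]=(y\leadsto q)(q\leadsto r)(r\leadsto z)$, where the $x$-arm appears in both $[x,y]$ and $[z,x]$, traversing exactly the same parts and gates, and similarly for the $q$- and $r$-arms. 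Applying the bigon fact inside each part along a shared arm shows the two copies of that arm are within $\delta$; meanwhile the central subsegments $p\leadsto q$, $q\leadsto r$, $r\leadsto p$ all lie in $P_m$ and, being geodesics of $\Gamma_m$ by convexity, form a geodesic triangle of $\Gamma_m$, hence $\delta$-thin by hypothesis.

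Finally I would conclude by a case check on the location of a vertex $w$ on a side: if $w$ lies on an arm it is within $\delta$ of the matching arm carried by another side, and if $w$ lies in the central part $P_m$ it is within $\delta$ of another central subsegment by thinness of the triangle in $\Gamma_m$; either way $w$ is within $\delta$ of the union of the other two sides, and by symmetry the triangle is $\delta$-thin. Degenerate configurations, e.g. when some of $P_a,P_b,P_c$ coincide with $P_m$ or with each other, merely collapse arms to trivial paths and are covered by the same argument. The main obstacle I expect is not any single estimate but making the structural claims precise: establishing the rigid gate/bridge decomposition and verifying that two sides sharing an arm genuinely traverse an \emph{identical} gate sequence, together with the bookkeeping that guarantees the final constant is exactly $\delta$. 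The point is that the arm-matching contributes $\delta$ and the central triangle contributes $\delta$, but since any fixed $w$ is treated by only one of these two mechanisms, the two contributions do not add up.
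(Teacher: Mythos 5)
Your proposal is correct, but it takes a genuinely different route from the paper's proof. The paper argues by induction on the number of parts that the geodesic triangle meets: the image of the triangle in the quotient tree is a finite tripod, so one can choose a leaf part $P_l$ containing exactly one corner, split the two sides issuing from that corner at the unique transition edge $e_{l,m}$, and then combine the inductive hypothesis (applied to the inner triangle, which meets fewer parts) with degenerate-triangle ($\delta$-bigon) thinness inside $\Gamma_l$, finishing with the same kind of case analysis on the location of a vertex that you use. Your argument is instead global and non-inductive: you decompose all three sides simultaneously along the tripod of the quotient tree, match the two copies of each arm part-by-part via bigons (this is where the rigidity of the gate/bridge sequence is needed), and handle the centre by $\delta$-thinness of the geodesic triangle on the three gates $p,q,r$ inside the median part $\Gamma_m$. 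Both arguments rest on the same two structural facts --- each $\Gamma_i$ is isometrically embedded and convex, and each inter-part edge is a cut edge that a geodesic can cross at most once --- and both yield the sharp constant $\delta$: your closing observation that the arm-matching mechanism and the central-triangle mechanism never apply to the same vertex is exactly what prevents the constant from degrading to $2\delta$, and the paper achieves the same effect through its induction. The trade-off is that your tripod decomposition gives a more transparent picture of where thinness comes from, at the cost of heavier structural bookkeeping (the gate/entry-exit claims, and the degenerate configurations where corner parts coincide with the median), whereas the leaf-peeling induction keeps each step's bookkeeping minimal. One point to make explicit in a full write-up, which you do respect: the bigon fact may only be invoked inside the parts $\Gamma_{Q_\ell}$, which are $\delta$-hyperbolic by hypothesis, never in $\Gamma$ itself, since the hyperbolicity of $\Gamma$ is precisely what is being proved; the paper is careful on the same point, applying bigon thinness only within $\Gamma_l$.
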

\proof
For all $i, j \in I$ if there is an edge from $P_i$ to $P_j$ then by assumption it is unique, and we shall use $e_{i,j}$ to denote this edge. So $e_{i,j}$ is the unique edge with $\alpha(e_{i,j}) \in P_i$ and $\omega(e_{i,j}) \in P_j$, when such an edge exists. Since the graph is symmetric, if the edge $e_{i,j}$ exists then so does the edge $e_{i,j}^{-1}$, and thus $e_{i,j}^{-1}$ will be the unique edge from $P_j$ to $P_i$, i.e. $e_{i,j}^{-1} = e_{j,i}$. We shall call these edges $e_{i,j}$ the \emph{transition edges} of the graph $\Gamma$.  

Observe that for all $i \in I$ the natural inclusion maps $\Gamma_i \rightarrow \Gamma$ define isometric embeddings of these graphs when viewed as metric spaces with the usual distance metric. This is because the conditions that the quotient graph is a tree, and there is a most one edge between any two parts of the partition, imply that any simple path (and hence in particular any geodesic) in $\Gamma$ between two vertices in some part $P_j$ must be completely contained in $\Gamma_i$.

Now consider a geodesic triangle $p_1, p_2, p_3$ in $\Gamma$. Let $T$ denote this triangle. 
Let $P_k$ $(k \in L)$ be the set of all parts of the partition with which $T$ has non-empty intersection. Clearly $L$ is finite. 
We shall prove that $T$ is $\delta$-hyperbolic by induction on $|L|$. 

When $|L|=1$, say $L = \{k\}$, this means that $T$ is contained in $\Gamma_k$. Since the natural embedding of $\Gamma_k$ into $\Gamma$ is an isometric embedding, it follows that $T$ is a geodesic triangle in $\Gamma_k$, and hence $T$ is $\delta$-thin in $\Gamma_k$ as $\Gamma_k$ is $\delta$-hyperbolic by assumption. Since $\Gamma_k$ embeds isometrically, and $T$ is $\delta$-thin in $\Gamma_k$, it then follows that $T$ is $\delta$-thin in $\Gamma$, as required. 

For the inductive step, now suppose that $|L|>1$ and that the result holds for all geodesic triangles whose vertex sets intersect fewer than $|L|$ parts of the partition $P$. Consider the image $Q$ of the triangle $T$ in the quotient graph $\Gamma / P$. So $Q$ is the graph with vertex set $\{ P_k : k \in L \}$ and with an edge from $P_{k_1}$ to $P_{k_2}$ if and only if there is a transition edge $e_{k_1,k_2}$ in $\Gamma$ from $\alpha(e_{k_1,k_2}) \in P_{k_1}$ to $\omega(e_{k_1,k_2}) \in P_{k_2}$. 

It follows from the hypotheses that $Q$ is a finite subtree of the tree $\Gamma/P$. In fact, $Q$ is a geodesic triangle in the tree $\Gamma/P$ and hence $Q$ is in fact a tripod.  Since we are assuming that $T$ is not contained in a single part $P_q$, that is $|L|>1$, it now follows that there exists some $P_l$  with $l \in L$ such that 

\begin{enumerate}
\item[(a)] $P_l$ is a leaf in the finite tree $Q$ (meaning a vertex of degree one), and in addition
\item[(b)] $P_l$ contains exactly one corner of the triangle $T$. 
\end{enumerate}

By the corners of the triangle $T$ we mean the set $\mathcal{C} = \{ \alpha(p_1), \alpha(p_2), \alpha(p_3) \}$. Let $P_m$ be the unique vertex in $Q$ that is adjacent to $P_l$. 
Suppose that for instance $\alpha(p_1) \in P_l$. Note that by the choice of $P_l$ we then have $\alpha(p_2) \not\in P_l$ and $\alpha(p_3) \not\in P_l$, since $\alpha(p_1) \in P_l$ and $P_l$ contains exactly one corner of the triangle $T$.

We can now decompose the sides $p_1,  p_2,   p_3$ of the triangle $T$ as follows (see Figure \ref{fig:paths} for illustration): 

\begin{align*}
p_1 & = q_1 e_{l,m} q_1',  \quad p_3 = q_3 e_{m,l} q_3', 
\end{align*}
where 
\begin{itemize}
\item $q_1$ is a geodesic in $\Gamma_l = \langle P_l \rangle$ from $\alpha(p_1)$ to $\alpha(e_{l,m})$, 
\item $q_3'$ is a geodesic in $\Gamma_l = \langle P_l \rangle$ from $\alpha(e_{l,m})$ to $\alpha(p_1)$,
\item $q_1'$ is a geodesic in $\langle \left( \bigcup_{k \in L} P_k \right) \setminus P_l \rangle$ from $\omega(e_{l,m})$ to $\alpha(p_2)$, 
\item $q_3$ is a geodesic in $\langle \left( \bigcup_{k \in L} P_k \right) \setminus P_l \rangle$ from to $\alpha(p_3)$ to $\omega(e_{l,m})$, and
\item the geodesic $p_2$ is contained in $\langle \left( \bigcup_{k \in L} P_k \right) \setminus P_l \rangle$. 
\end{itemize}

\begin{center}
\begin{figure}[h]
\includegraphics[scale=0.7]{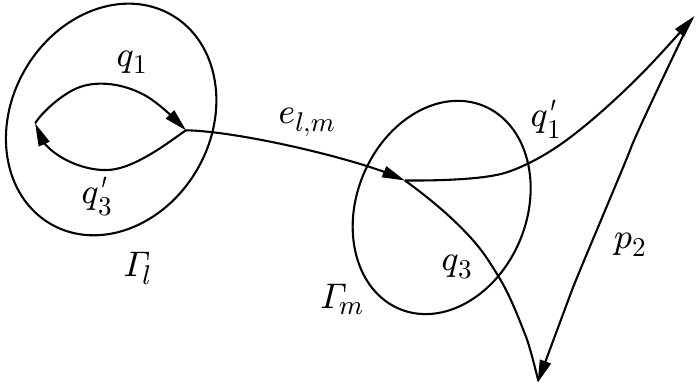}
\caption{The decomposition of $p_1, p_2, p_3$}
\label{fig:paths}
\end{figure}
\end{center}

But now $q_1', p_2, q_3$ is a geodesic triangle $T'$ in $\Gamma_l$ with no vertex in $P_l$. Hence by induction $T'$ is $\delta$-hyperbolic. Also $q_1, q_3', \iota_{\alpha(p_1)}$ is a $\delta$-hyperbolic triangle $T''$ in $\Gamma_i$, where $\iota_{\alpha(p_1)}$ denotes the empty path at $\alpha(p_1)$. 

It follows from these observations that $T$ is a $\delta$-hyperbolic triangle. To see this, let $v$ be an arbitrary vertex of $T$. There are several cases to consider. 

If $v$ is in $p_2$ then since $T'$ is $\delta$-hyperbolic, it follows that $v$ is $\delta$-close to a vertex on $q_1'$ or $q_3$, and hence is $\delta$-close to a vertex on $p_1$ or $p_3$, as required. Similarly if $v$ is in either $q_1'$ or in $q_3$, then using the fact that $T'$ is $\delta$-hyperbolic we deduce that $v$ is $\delta$-close to a vertex on $p_2$ or $p_3$, in the first case, and $\delta$-close to a vertex on $p_1$ or $p_2$, in the second case. 

The remaining possibilities are that $v$ is in $q_1$ or in $q_3'$. If $v$ is in $q_1$ then since $T''$ is $\delta$-hyperbolic $v$ is $\delta$-close to a vertex on $q_3'$ and hence to a vertex in $p_3$. Similarly, if $v$ is on $q_3'$ then $\delta$-hyperbolicity of $T''$ implies that $v$ is $\delta$-close to a vertex in $q_1$ and hence to a vertex in $p_1$.   

This completes the proof that $T$ is $\delta$-hyperbolic. 
\qed

Note that the condition given in the Theorem~\ref{thm:tree:of:hyperbolic} is stronger than simply saying that $P$ is a strong tree decomposition, since in addition to $\Gamma / P$ being a tree, we also insist that between any part of the partition there can be at most one edge.

Given a finite list of words $u_1, \ldots, u_m \in (A \cup A^{-1})^*$ we define  
\[
e(u_1, u_2, \ldots, u_m) = u_1 u_1^{-1} u_2 u_2^{-1} \ldots u_m u_m^{-1}.
\]
Since this word freely reduces to the identity in the free group $\FG(A)$ it follows that this word represents an idempotent in the free inverse monoid $\FIM(A)$. In \cite{Gray19} the following result is proved. 

\begin{T}
\label{thm_ConstructionGeneral}
Let $A = \{a_1, \ldots, a_n\}$ and let $r_1, \ldots, r_m, w_1, \ldots w_k \in (A \cup A^{-1})^*$. Let $G$ be the group $\Gpres{A}{r_1=1, \ldots, r_m=1}$  and let $M$ be the inverse monoid 
\[
\Ipres{A,t}{er_1=1, r_2=1, \ldots, r_m=1} 
\]
where $e$ is the idempotent word 
\[
e(a_1, \ldots, a_n, tw_1t^{-1}, \ldots, tw_kt^{-1}, a_1^{-1}, \ldots, a_n^{-1}). 
\]
Let $T$ be the submonoid of $G$ generated by $W = \{w_1, \ldots, w_k\}$. Then $M$ is an $E$-unitary inverse monoid. Furthermore,  if $M$ has decidable word problem then the membership problem for $T$ within $G$ is decidable. 
\end{T}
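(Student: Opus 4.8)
The plan is to reduce the submonoid membership problem for $T$ in $G$ to the word problem of $M$, via the Schützenberger graph of the identity. First I would identify the maximal group image of $M$ as the free product $G*\langle t\rangle$ with $\langle t\rangle\cong\mathbb{Z}$: the word $e$ freely reduces to $1$, and the relators $r_i$ involve only $A$, so $t$ becomes a free generator. A useful preliminary simplification is that, since $e$ is idempotent and $er_1=1$ holds in $M$, we have $e=e(er_1)=(ee)r_1=er_1=1$, so $e=1$ in $M$. As $e$ is the product (hence the meet) of the idempotents $a_ia_i\inv$, $a_i\inv a_i$ and $(tw_jt\inv)(tw_jt\inv)\inv$, each of these equals $1$; thus every $a_i$ is a unit of $M$ (so $\langle A\rangle\cong G$ sits inside $M$ as units) and each $s_j:=tw_jt\inv$ is right invertible, satisfying $s_js_j\inv=1$.

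For $E$-unitarity I would show that the canonical map $\sigma\colon M\to G*\langle t\rangle$ is idempotent pure, equivalently that $\overline\sigma$ is injective on every Schützenberger graph. Because each relator represents $1$ in $G*\langle t\rangle$, the map $\overline\sigma$ is a well-defined label-preserving immersion of $\S(w)=\Exp(\MT(w))$ into the Cayley graph of $G*\langle t\rangle$; the point to establish is that no folding in Stephen's construction ever identifies two vertices carrying distinct group images, which I would verify from the normal-form (Bass--Serre tree) structure of the free product, in which distinct $t$-branches cannot collide. Granting $E$-unitarity, Lemma \ref{lem:full:sibgraph} applies --- the presentation is special, all relators having the form $w=1$ --- so each $\S(w)$ embeds as an induced subgraph of the Cayley graph of $G*\langle t\rangle$, and I henceforth identify $\S(w)$ with this image.

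The reduction itself: given a word $u$ over $A\cup A\inv$ representing $g=u_G\in G$, form the idempotent word $W_u=(tut\inv)(tut\inv)\inv=tut\inv tu\inv t\inv$ and claim $W_u=1$ in $M$ if and only if $g\in T$; deciding $W_u=1$ is a single instance of the word problem. The forward implication is direct: if $g=w_{j_1}\cdots w_{j_l}\in T$, then peeling off the inner factors $s_js_j\inv=1$ gives $(s_{j_1}\cdots s_{j_l})(s_{j_1}\cdots s_{j_l})\inv=1$, while repeatedly applying $es=s(s\inv es)$ to carry idempotents rightwards yields $s_{j_1}\cdots s_{j_l}=tgt\inv f\le tgt\inv$ for some idempotent $f$; hence $1\le(tgt\inv)(tgt\inv)\inv\le 1$ and $W_u=1$. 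For the converse, $W_u=1$ means exactly $tgt\inv\in R_1$, i.e.\ that the group element $tgt\inv$ is a vertex of $\S(1)$.

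It remains to describe $\S(1)$ and prove that its vertices of the form $tgt\inv$ ($g\in G$) are precisely those with $g\in T$; this is the main obstacle. The relators force, at every vertex $v$ of $\S(1)$, all $a_i^{\pm}$-edges (a full coset copy of the Cayley graph of $G$), an outgoing $t$-edge $v\to vt$, and for each $j$ the vertices $vt,\,vtw_j,\,vtw_jt\inv$ together with the $t$-edge $vtw_jt\inv\to vtw_j$. Consequently $\S(1)$ is a tree of copies of the Cayley graph of $G$ joined by single $t$-edges --- precisely the situation of Theorem \ref{thm:tree:of:hyperbolic}, which is what will later give $\delta$-hyperbolicity when $G$ is hyperbolic. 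An induction on $l$ shows $t(w_{j_1}\cdots w_{j_l})t\inv\in V(\S(1))$, recovering $g\in T\Rightarrow tgt\inv\in V(\S(1))$ geometrically; the hard direction is the reverse inclusion. I would prove it by exhibiting the explicit induced subgraph $\Delta$ of the Cayley graph of $G*\langle t\rangle$ generated by the forced procedure above, checking that $\Delta$ is connected, contains $1$, and is $P$-complete, so that $\Delta=\S(1)$ by confluence of Stephen's construction; reading off the vertices $tgt\inv$ of $\Delta$ then gives exactly $g\in T$. The delicate, recurring point --- controlling foldings and collisions through the free-product normal form --- is what simultaneously underlies the $E$-unitarity argument and the completeness of $\Delta$.
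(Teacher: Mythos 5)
You should first be aware that the paper does not actually prove this theorem: it imports it verbatim from \cite{Gray19} (``In \cite{Gray19} the following result is proved''), so your attempt can only be measured against what a complete proof requires. Your skeleton does match the strategy of \cite{Gray19}: the derivation $e=e(er_1)=er_1=1$, hence that every $a_i$ is a unit and every $tw_jt^{-1}$ (and indeed $t$ itself) is a right unit; the reduction of membership of $g=u_G$ in $T$ to the single word-problem instance $(tut^{-1})(tut^{-1})^{-1}=1$; and the picture of $\S(1)$ as a tree of copies of the Cayley graph of $G$ inside the Cayley graph of $G\ast\FG(t)$. Your forward implication ($g\in T\Rightarrow W_u=1$) is correct and complete. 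But both load-bearing claims rest on tools that fail. For $E$-unitarity, the statement you propose to verify --- that no folding in Stephen's procedure ever identifies two vertices carrying distinct group images --- is true for \emph{every} inverse monoid presentation and proves nothing: the morphism $\overline{\sigma}$ to the Cayley graph of the maximal group image always exists (the paper notes this explicitly), precisely because any two edges that get folded already have endpoints with equal images; non-$E$-unitary monoids such as $\Ipres{a}{a^2=a^3}$ satisfy your condition as well. What $E$-unitarity actually needs is the converse statement in the limit --- distinct vertices of $\S(v)$ have distinct $\overline{\sigma}$-images --- and it must be proved for \emph{all} words $v$, not only $v=1$, since $E$-unitarity is equivalent to injectivity of $\overline{\sigma}$ on every Sch\"utzenberger graph; your proposal never discusses $\S(v)$ for general $v$.

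The second gap is your identification criterion $\Delta=\S(1)$. ``Connected, contains the base vertex, deterministic and $P$-complete'' does not characterize $\S(1)$, so confluence alone cannot close the argument. A counterexample occurs inside this very family: take $n=0$, $m=k=1$ and all words empty, so that $M=\Ipres{t}{tt^{-1}tt^{-1}=1}$ is the bicyclic monoid and $G\ast\FG(t)\cong\mathbb{Z}$. Both the one-sided ray on $\{t^j: j\geq 0\}$ (which is $\S(1)$, as in Figure 1 of the paper) and the full bi-infinite Cayley graph of $\mathbb{Z}$ satisfy all four of your conditions; in general the entire Cayley graph of $G\ast\FG(t)$ always does. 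The missing hypothesis in Stephen's uniqueness statement is that $\Delta$ be an \emph{approximate} graph of $\A(1)$, i.e.\ $L(\Delta)\subseteq L(\A(1))$ --- equivalently, that $\Delta$ is exhausted by finite subgraphs obtainable from $\MT(1)$ by $P$-expansions and foldings, or that $\Delta$ is minimal among closed subgraphs containing the base vertex. Establishing that containment for the candidate graph (and for the analogous candidate for every $\S(v)$, which is what then delivers $E$-unitarity) is precisely the technical core of the proof in \cite{Gray19}, and it is the step your proposal leaves open; it is also, as you yourself observe in your closing sentence, the same ``delicate, recurring point'' on which your $E$-unitarity sketch depends, so the circularity cannot be broken without doing this work.
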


\begin{Def}
Let $G$ be a finitely generated group with finite generating set $A$ and let $t \not\in A$. Let $\mathcal{G}$ be the Cayley graph of the group $G \ast FG(t)$ with respect to $A \cup \{t\}$. We say that an induced subgraph $\Omega$ of $\mathcal{G}$ is \emph{$G$-closed} if the vertex set of $\Omega$ is closed under the action of $G$ on $G \ast FG(t)$ via right multiplication. That is, the vertex set of $\Omega$ is a union $V(\Omega) = \cup_{k \in Z} kG$ of cosets of $G$ in $G \ast FG(t)$ for some (possibly infinite) subset $Z$ of $G \ast FG(t)$. 
\end{Def}

\begin{T}\label{thm:two:triangle} 
Let $G$ be a finitely generated group with finite generating set $A$ and let $t \not\in A$.  Let $\Omega$ be an induced subgraph of the Cayley graph $\mathcal{G}$ of the group $G \ast FG(t)$ with respect to $A \cup \{t\}$.  If the Cayley graph of $G$ with respect to $A$ is $\delta$-hyperbolic and $\Omega$ is $G$-closed then $\Omega$ is $\delta$-hyperbolic.          
  \end{T}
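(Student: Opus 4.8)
The plan is to deduce this from Theorem~\ref{thm:tree:of:hyperbolic} by exhibiting on the vertex set of $\Omega$ a partition into copies of the Cayley graph of $G$. Write $F = G \ast \FG(t)$, and recall that every element of $F$ has a unique normal form as an alternating product of nontrivial elements of $G$ and nonzero powers of $t$. Since $\Omega$ is $G$-closed, its vertex set is a disjoint union $V\Omega = \bigcup_{k \in Z} kG$ of left cosets of $G$; let $\P$ be the partition of $V\Omega$ into these cosets. Because $\delta$-hyperbolicity is a condition on geodesic triangles, and every geodesic triangle lies inside a single connected component, it suffices to treat each component separately. So I would fix a connected component $\Omega'$ of $\Omega$ and prove that it is $\delta$-hyperbolic, with $\P$ restricting to a partition of $V\Omega'$.

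First I would check that each part induces a copy of $\mathrm{Cay}(G,A)$. An $a$-edge ($a \in A$) from a vertex $kg$ ($g \in G$) ends at $kga$, which lies in the same coset $kG$, whereas a $t^{\pm 1}$-edge leaves the coset; hence the only edges internal to a part $kG$ are the $A$-edges, and as $\Omega'$ is an induced subgraph containing all of $kG$, the map $g \mapsto kg$ is a labelled-graph isomorphism $\mathrm{Cay}(G,A) \to \langle kG \rangle$, which is $\delta$-hyperbolic by hypothesis. Next I would verify that between two distinct parts there is at most one edge: any such edge is a $t^{\pm 1}$-edge, and an edge from $xG$ to $yG$ forces $yG = x g t^{\varepsilon} G$ for some $g \in G$, $\varepsilon = \pm 1$. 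The normal form theorem shows $g$ and $\varepsilon$ are determined by the ordered pair of cosets: if $x g_1 t^{\varepsilon_1}G = x g_2 t^{\varepsilon_2}G$, then $t^{-\varepsilon_1} g_1^{-1} g_2 t^{\varepsilon_2} \in G$, which in $F$ is impossible unless $g_1 = g_2$ and $\varepsilon_1 = \varepsilon_2$. So the transition edge between two parts is unique.

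The crux, which I expect to be the main obstacle, is showing that the quotient $\Omega'/\P$ is a tree; this is where the free product structure is essential. I would first argue that the quotient $\mathcal{G}/\P_0$ of the whole Cayley graph by the full coset partition $\P_0$ is a tree, and then observe that $\Omega'/\P$ is the induced subgraph of $\mathcal{G}/\P_0$ on the cosets occurring in $\Omega'$; being connected (since $\Omega'$ is), it is a subtree. To see that $\mathcal{G}/\P_0$ is a tree, note it is connected because $\mathcal{G}$ is, so it remains to rule out reduced cycles. A reduced closed path $x_0 G, x_1 G, \ldots, x_n G = x_0 G$ $(n \geq 1)$ yields, by reading off the transition edges, an identity $x_0 w G = x_0 G$ with $w = g_1 t^{\varepsilon_1} \cdots g_n t^{\varepsilon_n}$, where $g_i \in G$ and $\varepsilon_i = \pm 1$, so that $w \in G$. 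The no-backtracking condition $x_{i-1}G \neq x_{i+1}G$ translates exactly into the requirement that no subword $t^{\varepsilon_i} g_{i+1} t^{\varepsilon_{i+1}}$ collapses, i.e. one never has both $g_{i+1} = 1$ and $\varepsilon_{i+1} = -\varepsilon_i$. Consequently every cancellation-free run of $t$-letters has a fixed sign and combines rather than cancels, so the normal form of $w$ retains at least one occurrence of $t$; by the normal form theorem such a $w$ cannot lie in $G$, a contradiction. Hence $\mathcal{G}/\P_0$ has no reduced cycle and is a tree.

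With the three hypotheses of Theorem~\ref{thm:tree:of:hyperbolic} verified for $\Omega'$ (each part is $\delta$-hyperbolic, there is at most one edge between distinct parts, and the quotient is a tree), that theorem yields that $\Omega'$ is $\delta$-hyperbolic. Since every geodesic triangle of $\Omega$ lies in some component $\Omega'$, it follows that $\Omega$ is $\delta$-hyperbolic, as required.
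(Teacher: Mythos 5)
Your proposal is correct and takes essentially the same approach as the paper: partition $V(\Omega)$ into the $G$-cosets, observe that the transition edges are exactly the edges labelled $t^{\pm 1}$, and invoke Theorem~\ref{thm:tree:of:hyperbolic}. The paper records this in two lines as an ``observe that the hypotheses hold''; your verifications (each coset induces a copy of the Cayley graph of $G$, uniqueness of the transition edge between cosets and the tree property of the quotient via the free-product normal form, plus the reduction to connected components) are precisely the details behind that observation.
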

\begin{proof}
Observe that the graph $\Omega$ satisfies the conditions of Theorem~\ref{thm:tree:of:hyperbolic} where the partition $P$ of $V(\Omega)$ is the equivalence relation with equivalence classes the $G$-orbits $vG$ ($v \in V(\Omega)$), and the transition edges are those labelled by $t$ and $t^{-1} $. 
Now for every $v \in V(\Omega)$ the induced subgraph $\langle vG \rangle$ is $\delta$-hyperbolic since it is isometric to $\langle G \rangle$ which in turn is $\delta$-hyperbolic by the assumption that the Cayley graph of $G$ with respect to $A$ is $\delta$-hyperbolic. The fact that $\langle vG \rangle$ and $\langle G \rangle$ are isometric can be proved by first noting that without loss of generality (in the case that $vG \neq G$) we can choose $v$ to be an element whose normal form in the free product $G \ast FG(t)$ ends in $t^j$ for some $j \neq 0$, and then checking that the left translation map $g \mapsto vg$ defines an isometry from $\langle G \rangle$ to $\langle vG \rangle$.   
The result now follows from Theorem~\ref{thm:tree:of:hyperbolic}.
\end{proof}

\begin{T}\label{thm:ThmA}
Let $M = \Ipres{A,t}{ f u_1=1, \ldots, u_m=1 }$ such that $u_i \in (A \cup A^{-1})^*$ for $1 \leq i \leq m$,  
$f \in (A \cup A^{-1} \cup \{t, t^{-1} \})^*$ is an idempotent word,  
and where every $a \in A$ represents an invertible element of $M$.  Set $H = \Gpres{A}{u_1=1, \ldots, u_m=1}$.  If $M$ is $E$-unitary and $H$ is $\delta$-hyperbolic then every Sch\"{u}tzenberger  graph of $M$ is $\delta$-hyperbolic.  
\end{T}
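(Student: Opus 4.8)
The plan is to realise each Sch\"utzenberger graph of $M$ as an $H$-closed induced subgraph of the Cayley graph of a free product and then invoke Theorem~\ref{thm:two:triangle}.

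First I would pin down the greatest group homomorphic image of $M$. The presentation of $M$ is special, all defining relations having the form $v=1$, so its greatest group image is $G=\Gpres{A,t}{fu_1=1,u_2=1,\ldots,u_m=1}$, the group given by the same relations. Since $f$ is an idempotent word it represents an idempotent of $\FIM(A\cup\{t\})$ and hence maps to $1$ in the greatest group image $\FG(A\cup\{t\})$; that is, $f$ freely reduces to the empty word, so the relation $fu_1=1$ is equivalent to $u_1=1$ in $G$. As $t$ occurs in none of the $u_i$, this gives $G=H\ast\FG(t)$, the free product of $H$ with the free group on $t$. Next, because $M$ is $E$-unitary and the presentation is special, Lemma~\ref{lem:full:sibgraph} shows that for every word $w$ the map $\overline\sigma$ embeds $\mathcal S(w)$ into the Cayley graph $\mathcal G$ of $G=H\ast\FG(t)$ with respect to $A\cup\{t\}$ as an induced subgraph. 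To apply Theorem~\ref{thm:two:triangle} with base group $H$, whose Cayley graph is $\delta$-hyperbolic by hypothesis, it then remains only to check that the image of each $\mathcal S(w)$ is $H$-closed, i.e.\ that its vertex set is closed under right multiplication by $H$.

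This $H$-closedness is where the hypothesis that every $a\in A$ is invertible in $M$ enters, and I expect it to be the crux of the argument. The vertices of $\mathcal S(w)$ are exactly the elements of the $\mathcal R$-class $R_w$, with $\overline\sigma$ sending $s\in R_w$ to $\sigma(s)$. Given $s\in R_w$ and $h\in H$, choose $u\in(A\cup A^{-1})^*$ with $u_G=h$, which is possible since $H\leq G$ is generated by the image of $A$. Because every generator in $A$ represents a unit of $M$, the element $u\tau$ is a unit, so $(u\tau)(u\tau)^{-1}=1$. Then $s(u\tau)\in M$ satisfies $\bigl(s(u\tau)\bigr)(u\tau)^{-1}=s$, so $s$ and $s(u\tau)$ are mutually right-multiplying and hence $\mathcal R$-related; thus $s(u\tau)\in R_w$ is again a vertex of $\mathcal S(w)$, and $\overline\sigma\bigl(s(u\tau)\bigr)=\sigma(s)\,u_G=\sigma(s)\,h$. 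Therefore $\sigma(s)\,h$ lies in the vertex set of $\overline\sigma(\mathcal S(w))$, which shows $\sigma(s)H$ is contained in it and proves $H$-closedness.

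With $H$-closedness in hand, Theorem~\ref{thm:two:triangle} (applied with base group $H$) yields that each induced subgraph $\mathcal S(w)$ is $\delta$-hyperbolic, and since every Sch\"utzenberger graph of $M$ is of the form $\mathcal S(w)$ the theorem follows. The points to handle with care are the direction of multiplication and the resulting left-coset description of the vertex set, and the observation that invertibility of $t$ is neither available nor needed: we only require closure under right multiplication by the subgroup $H$, which is generated by the units coming from $A$.
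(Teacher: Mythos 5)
Your proposal is correct and follows essentially the same route as the paper's own proof: identify the maximal group image as $H \ast \FG(t)$ (using that the idempotent word $f$ dies in the group image), embed $\mathcal S(w)$ as an induced subgraph of its Cayley graph via Lemma~\ref{lem:full:sibgraph}, verify $H$-closedness of the vertex set $R_w\sigma$ from the invertibility of the generators in $A$, and conclude with Theorem~\ref{thm:two:triangle}. The only cosmetic difference is that the paper verifies closure letter-by-letter (for each $a \in A \cup A^{-1}$, $ma \in R_w$) while you package the same argument into a single unit $u\tau$ representing $h \in H$; both are valid.
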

\begin{proof}
Set $B = A \cup \{t\}$. 
  Let $u \in (B \cup B^{-1})^*$ be arbitrary and consider its Sch\"{u}tzenberger graph $\mathcal{S}(u)$.   Let $R_u$ be the $\mathcal{R}$-class of $u$ in $M$. Since every $a \in A \cup A^{-1}$ represents an invertible element of $M$ it follows that for all $m \in R_u$ and for all $a \in A \cup A^{-1}$ we have $ma \in R_u$. This implies that for all $y \in R_u \sigma$ and all $a \in A$ we have $ya \in R_u \sigma$. 
Let $G$ be the maximal group image of $M$, noting that $G \cong H \ast FG(t)$ since $f$ is an idempotent word.    
By Lemma~\ref{lem:full:sibgraph}, $\mathcal{S}(u)$ is isomorphic to the subgraph $\Omega$ of the Cayley graph $\mathcal{G}$ induced by the set $R_u \sigma$. Hence $\Omega$ is an $H$-closed induced subgraph of $\mathcal{G}$. It then follows by Theorem~\ref{thm:two:triangle} that $\Omega$ is $\delta$-hyperbolic and hence so is $\mathcal{S}(u)$. This completes the proof.               
\end{proof}

We are now in a position to prove the main theorem of this section. 

\begin{proof}\emph{(Proof of Theorem~\ref{thm:Rips:correct})}
Let $A = \{ a_1, \ldots, a_n \}$ and let $r_1, \ldots, r_m, w_1, \ldots, w_k \in (A \cup A^{-1})^*$ be chosen so that $H = \Gpres{A}{r_1=1, \ldots, r_m=1}$ is a $\delta$-hyperbolic group and the membership problem for the submonoid (which is equal to the subgroup) generated by $\{ w_1, \ldots, w_k, w_1^{-1}, \ldots, w_k^{-1}\}$ is undecidable. Such a choice is possible by the Rips construction \cite{Rips82} Corollary~\ref{cor:Rips82}. Let $M = \Ipres{A,t}{fr_1 = 1, r_2=1, \ldots, r_m=1}$ where $f$ is the idempotent word \[e(a_1, \ldots, a_n, tw_1 t^{-1}, \ldots, tw_k t^{-1}, tw_1^{-1} t^{-1}, \ldots, tw_k^{-1} t^{-1}, a_1^{-1}, \ldots, a_n^{-1}).\] Then by \cite[Theorem~3.8]{Gray19} $M$ is an $E$-unitary inverse monoid with undecidable word problem. Note that the maximal group image of $M$ is $G = H \ast FG(t)$ since $f$ is an idempotent in the free inverse monoid on $A \cup \{ t \}$.
  By Theorem~\ref{thm:ThmA} since $H$ is $\delta$-hyperbolic and all $a \in A$ are invertible in $M$, and $M$ is $E$-unitary, it follows that every Sch\"{u}tzenberger graph of $M$ is $\delta$-hyperbolic, as required.                        

To establish (iii) in the statement of the theorem, first observe that by \cite[Lemma~3.3]{Gray19}
the presentation 
$\Ipres{A,t}{fr_1 = 1, r_2=1, \ldots, r_m=1}$ of $M$ is equivalent to the presentation  
\begin{align*}
\mathrm{Inv}\langle A, t \:|\: 
& r_1 = 1, r_2=1, \ldots, r_m=1, \\
& a_i a_i^{-1}=1, \; a_i^{-1} a_i=1 \; (1 \leq i \leq n), \\
& tw_jt^{-1} tw_j^{-1}t^{-1} = 1, \; tw_j^{-1}t^{-1} tw_jt^{-1} = 1 \; (1 \leq j \leq k)  
\rangle,
\end{align*}
in the sense that the identity map on $(A \cup A^{-1})^*$ induces an isomorphism between the inverse monoids defined by these two presentations. 
From the relations in this new presentation for $M$ we can see that $t$ is right invertible in $M$ and every generator $a_i$ is invertible. Hence we can add generators $t'$ and $a_i'$ and relations $tt'=1$, $a_i a_i'=1$ and  $a_i' a_i = 1$ for all $i = 1, \ldots, n$ and then replace every occurrence of $t^{-1}$ by $t'$, and of $a_i^{-1}$ by $a_i'$. 
Note that $tt'=1$ implies $t' = t^{-1}$.  
This will result in a presentation for $M$ with the property that every relation is of the form $\beta=1$ with $\beta \in (A \cup \{t \})^*$. This completes the proof of the theorem. 
\end{proof}

\section{Open questions}

We would like to close with some open questions related to our results.

According to a theorem of Bonk and Kleiner \cite[Theorem 1]{BK}, every hyperbolic group $G$ must satisfy exactly one of the following conditions:
\begin{itemize}
\item
$G$ is virtually free;
\item
the hyperbolic plane ${\mathbb{H}}^2$ admits a quasi-isometric embedding into the Cayley graph of $G$ (with respect to some/any finite generating set).
\end{itemize}
It is natural to ask whether a similar dichotomy exists for inverse semigroups with $\delta$-hyperbolic Sch\"utzenberger graphs. 

\bq
Does there exist a finitely presented inverse monoid $M$ such that
\begin{itemize}
\item all Sch\"utzenberger graphs of $M$ are $\delta$-hyperbolic,
\item $M$ is not tree-like,
\item no Sch\"utzenberger graph of $M$ contains a quasi-isometric image of $\mathbb H^2$?
\end{itemize}
\eq
The authors were able to construct a non-finitely presented counterexample.

As we mentioned in the introduction, by the Muller-Schupp theorem \cite{MS}, finitely generated virtually free groups are exactly the groups that have a context-free word problem. Theorem \ref{cfthm} can be regarded as a generalization of the direction that finitely generated virtually free (that is, tree-like) groups have a context-free word problem. It is natural to ask whether the converse implication also holds for inverse monoids.

\bq
If every Sch\"utzenberger automaton of a finitely presented inverse monoid has a context-free language, is
the monoid necessarily tree-like?
\eq

The next question is concerned with a natural possible way to construct examples of tree-like inverse monoids. We have mentioned that for any inverse monoid $M$, when $M$ is $E$-unitary, the Sch\"utzenberger graph of $M$ embeds into the Cayley graph of $M/\sigma$. If $M/\sigma$ is virtually free, then the Sch\"utzenberger graphs embed into a tree-like graph. This in general does not imply that they themselves are tree-like, the authors have again found non-finitely presented (but finitely generated) counterexamples. However we suspect that the Sch\"utzenberger graphs inherit the tree-like structure in the finitely presented case.

\bq
Let $M$ be any finitely presented, $E$-unitary inverse monoid with $M/\sigma$ virtually free. Is $M$ tree-like?
\eq

Lastly we close with raising further decidability questions about tree-like inverse monoids. By the results of Benois \cite{Benois} and Grunschlag \cite{Gru}, finitely generated virtually free groups have a decidable rational subset membership problem and hence decidable submonoid and subgroup membership problems, \cite{KSS} contains a proof of this statement relying on the easier direction of the Muller-Schupp theorem. On the other hand, it follows from \cite[Theorem 17]{LO} that inverse monoids defined by relations on group reducible words (which implies its Sch\"utzenberger graphs are trees \cite{MM}) have a decidable rational subset membership problem, and hence decidable submonoid and inverse submonoid membership problems. It is natural to ask whether (some of) these properties extend to tree-like inverse monoids.

Another related question is the decidability of the closed inverse submonoid membership problem. An inverse submonoid $H$ of an inverse monoid $M$ is called \emph{closed} if it is closed upwards in the natural partial order. These are exactly the inverse submonoids arising as stabilizers with respect to actions on $M$, and a modification of the Todd-Coxeter procedure using Stephen's algorithm can be used to build an automaton recognizing the languages of such closed inverse submonoids \cite[Theorem 3.3]{MSz}. Perhaps geometric arguments similar to the ones used in the paper can be used to decide the language of such an automaton.

\bq
Do tree-like inverse monoids have a decidable rational subset/submonoid/inverse submonoid/closed inverse submonoid membership problem?
\eq


\begin{thebibliography}{99}
	
	
	\bibitem{Ant} Y. Antol\'in, {\em On Cayley graphs of virtually free groups},
	Groups -- Complexity -- Cryptology, \textbf{3(2)} (2011), 301--327
	
	\bibitem{Silva} V. Ara\'ujo, P. V. Silva, {\em Geometric characterizations of virtually free groups},
	J. Algebra Appl. {\bf 16} No. 09 (2017), 175--180 
	
	\bibitem{AR} M. A. Ayyash, E. Rodaro, \emph{Context-freeness of the languages of Sch\"utzenberger automata of HNN-extensions of finite inverse semigroups}, Publ. Inst. Math. \textbf{99} (2016), 177--191
	
	\bibitem{Benois} M. Benois, \emph{Parties rationnelles du groupe libre}, C. R. Acad. Sci. Paris S\'er. A-B \textbf{269} (1969), A1188--A1190
	
	\bibitem{BMM} J-C. Birget, S. Margolis, J. Meakin, \emph{The word problem for inverse monoids presented by one idempotent relator}, Theoret. Comput. Sci. \textbf{123} (1994), 273--289
	
	\bibitem{BK} M. Bonk, B. Kleiner, \emph{Quasi-hyperbolic planes in hyperbolic groups}, Proc. Amer. Math. Soc. \textbf{133} (2005), 2491--2494
	
	\bibitem{BH} M. R. Bridson, A. Haelfiger, {\em Metric spaces of non-positive curvature}, Springer-Verlag (1999)
	
	\bibitem{CS} J. Cassaigne, P. V. Silva, \textit{Infinite words and confluent rewriting systems: endomorphism
		extensions}, Internat. J. Algebra Comput. \textbf{19}(4) (2009), 443--490
	
	\bibitem{CMP} A. Cherubini, J. Meakin and B. Piochi, \emph{Amalgams of finite inverse semigroups}, J. Algebra \textbf{285} (2005), 706--725
	
	\bibitem{CNR} A. Cherubini, C. Nuccio, E. Rodaro, \emph{Amalgams of finite inverse semigroups and deterministic context-free languages}, Semigroup Forum 85 (2012), 129--146
	
	\bibitem{DG} A. Duncan, R. H. Gilman, \emph{Word hyperbolic semigroups}, Math. Proc. Cambridge
	Philos. Soc. \textbf{136}(3) (2004), 513--524
	
	
	\bibitem{KapDru} C. Drutu, M. Kapovich, \emph{Geometric Group Theory,} Colloquium Publications \textbf{63}, AMS (2018)
	
	\bibitem{Gray19} R. Gray, {\em Undecidability of the word problem for one-relator inverse monoids via right-angled Artin subgroups of one-relator groups}, Invent. math. \textbf{219} (2020), 987--1008
	
	\bibitem{BM} R. Gray, M. Kambites, {\em Groups acting on semisymmetric spaces and quasi-isometries of monoids},
	Trans. AMS, \textbf{365}(2) (2013), 555--578
	
	\bibitem{Gru} Z. Grunschlag. Algorithms in Geometric Group Theory. PhD thesis, University of
	California at Berkeley (1999)
	
	\bibitem{HLM} S. Hermiller, S. Lindblad, J. Meakin, \emph{Decision problems for inverse monoids presented by a single sparse relator} Semigroup Forum \textbf{81} (2010), 128--144
	
	\bibitem{HU} J. Hopcroft, J. Ullman, {\em Introduction to Automata Theory, Languages, and Computation}, Addison-Wesley (1979)
	
	\bibitem{KSS} M. Kambites, P. V. Silva, B. Steinberg, \textit{On the rational subset problem for groups}, J. Algebra, \textbf{309} (2007), 622--639
	
	\bibitem{law} M. V. Lawson, {\em Inverse Semigroups: the Theory of
		Partial Symmetries}, World Scientific (1998)
	
	\bibitem{LO} M. Lohrey, N. Ondrusch, {\em Inverse monoids: Decidability and complexity of algebraic questions}, Inf. Comput. \textbf{205} (2007), 1212--1234
	
	\bibitem{LS} R. C. Lyndon, Paul E. Schupp, {\em Combinatorial group theory}, Springer-Verlag (2001)
	
	\bibitem{MM1} S. Margolis, J. Meakin, {\em Free inverse monoids and
		graph immersions},  Int. J. Algebra and Computation, {\bf 3}, No. 1
	(1993), 79-99
	
	\bibitem{MM} S. Margolis, J. Meakin, {\em Inverse monoids, trees, and context-free languages}, Trans. Amer. Math. Soc. \textbf{335}(1) (1993), 259--276
	
	\bibitem{MSz} J. Meakin, N. Szak\'acs, {\em Inverse monoids and
		immersions of $2$-complexes}, Int. J. Algebra and Computation {\bf
		25}  (2015), 301--323
	
	\bibitem{MS} D. E. Muller, P. E. Schupp, \emph{Groups, the theory of ends, and context-free languages}, J. Comput. Syst. Sci. \textbf{26} (1983), 295--310
	
	\bibitem{munn} W. D. Munn, {\em Free inverse semigroups}, Proc.
	London Math. Soc. {\bf 30} (1974), 384--404
	
	\bibitem{Rips82} E. Rips, {\em Subgroups of small cancellation groups}, Bull. London Math. Soc. {\bf 14} (1982), 45--47
	
	\bibitem{RC} E. Rodaro, A. Cherubini, \emph{Decidability of the word problem in Yamamura?s HNN-extensions of finite inverse semigroups}, Semigroup Forum \textbf{77} (2008), 163--186
	
	\bibitem{Serre} J-P. Serre, {\em Trees}, Springer-Verlag (1980)
	
	\bibitem{Ste1} J. B. Stephen, {\em Presentations of inverse monoids},
	J. Pure and Appl. Algebra {\bf 63} (1990), 81--112
	
	\bibitem{Ste2} J. B. Stephen, {\em Inverse monoids and rational subsets of related groups},
	Semigroup Forum (1993) \textbf{46}, 98--108
	
\end{thebibliography}
\end{document}